\numberwithin{equation}{section}
\newtheorem*{rep@theorem}{\rep@title}
\newcommand{\newreptheorem}[2]{%
\newenvironment{rep#1}[1]{%
 \def\rep@title{#2 \ref{##1}}%
 \begin{rep@theorem}}%
 {\end{rep@theorem}}}
\newtheorem{theorem}{Theorem}[section]
\newtheorem{proposition}[theorem]{Proposition}
\newtheorem{lemma}[theorem]{Lemma}
\newtheorem{corollary}[theorem]{Corollary}
\newtheorem{remark}[theorem]{Remark}
\newtheorem{example}[theorem]{Example}
\theoremstyle{definition}
\newtheorem{definition}[theorem]{Definition}
\title{Holomorphic Witten instanton complexes 
on stratified pseudomanifolds with K\"ahler wedge metrics.}
\author{Gayana Jayasinghe}
\date{}
\begin{document}

%make stuff
\maketitle

\begin{abstract}
\justifying
We construct Witten instanton complexes for K\"ahler Hamiltonian Morse functions on stratified pseudomanifolds with wedge K\"ahler metrics satisfying a local conformally totally geodesic condition. We use this to extend Witten's holomorphic Morse inequalities for the $L^2$ cohomology of Dolbeault complexes, deriving versions for Poincar\'e Hodge polynomials, the spin Dirac and signature complexes for which we prove rigidity results, in particular establishing the rigidity of $L^2$ de Rham cohomology for these circle actions. We study formulas for Rarita Schwinger operators, generalize formulas studied by Witten and Gibbons-Hawking for the equivariant signature and extend formulas used to compute NUT charges of gravitational instantons. We discuss conjectural inequalities extending known Lefschetz-Riemann-Roch formulas for other cohomology theories including those of Baum-Fulton-Quart.
This article contains the first extension of Witten's holomorphic Morse inequalities and instanton complexes to singular spaces.

\end{abstract}

\tableofcontents

\justify

\section{Introduction}

Atiyah and Bott generalized the classic fixed point theorem of Lefschetz to general elliptic complexes in \cite{AtiyahBott1}, the first of a large class of localization theorems. In a series of papers including \cite{witten1981dynamical,witten1982supersymmetry,witten1982constraints,witten1983fermion,witten1984holomorphic} Witten investigated supersymmetry breaking, giving an original and fascinating take on Morse theory which he formulated extensions of for other elliptic complexes including twisted Dolbeault complexes in the presence of K\"ahler Hamiltonian Morse functions. We refer the reader to Witten's essay ``Lessons from Raoul Bott" in \cite{Yaubookfoundersindex}, and Bott's classic article \cite{bott1988morse} for history and some exposition.
The perspectives, results and techniques that came out of the work above mentioned have now permeated mathematics and physics and it is important to have a systematic understanding of how these ideas extend to singular spaces.
%``there were two ways to look at the paper. One, that you were trying to explain for physics why supersymmetry breaking is strange. The other for math you could say that you were trying to explain Morse theory from a supersymmetric perspective. The paper could be read in either of those two ways."

In \cite{jayasinghe2023l2} we extended the Atiyah-Bott Lefschetz fixed point theorem for $L^2$ cohomology of twisted Dolbeault complexes satisfying the \textit{Witt condition} to stratified pseuodmanifolds with wedge metrics and wedge complex structures. We also extended the Witten instanton complex for the de Rham operator for such spaces, proving $L^2$ de Rham Morse inequalities.
In this article we construct holomorphic Witten instanton complexes in the presence of K\"ahler Hamiltonian Morse functions, extending Witten's equivariant holomorphic Morse inequalities \cite{witten1984holomorphic} (as opposed to Demailly's asymptotic holomorphic Morse inequalities \cite{demailly1991holomorphic} which are also sometimes referred to simply as the holomorphic Morse inequalities) for twisted Dolbeault complexes satisfying the Witt condition on stratified pseudomanifolds with K\"ahler wedge metrics. We do this for isometric K\"ahler Hamiltonian circle actions that preserve the strata, and have isolated fixed points. There is a functorial equivalence between stratified spaces $\widehat{X}$ and manifolds with corners with iterated fibration structures $X$ which resolve the singularities, and we work in the latter category for the most part. 
We give an outline of the main results in subsection \ref{subsection_overview_introduction}, referring to the text for more details. We give a brief summary of the construction of the instanton complexes in subsection \ref{subsection_construction_summary_intro}, and delve into the background and related results in subsection \ref{subsection_background_related_results}.

\subsection{Overview of results and techniques.}
\label{subsection_overview_introduction}

In this article we study stratified spaces with wedge K\"ahler metrics (roughly these are ``asymptotically iterated conic metrics") which admit K\"ahler Hamiltonian Morse functions. This includes many algebraic varieties including so called ``cusp" curves (see Subsection \ref{subsection_LCTGMetrics}), singular algebraic surfaces appearing in the boundary of the moduli space of K3 surfaces and conifolds, some of which we study in Section \ref{Subsection_examples}. 
Moreover we assume that near critical points the metric is \textit{locally conformally totally geodesic}, which roughly means that the metric is conformal to a conic metric (product type metric) near singular strata, up to a prescribed asymptotic behaviour. Many toric varieties including the examples we study in this article admit metrics which satisfy this condition.

Given a twisted Dolbeault complex $\mathcal{P}_{\alpha}(X)=(L^2\Omega^{0,\cdot}(X;E), \mathcal{D}_{\alpha}(P), P=\overline{\partial}_E)$ where $E$ is a Hermitian bundle on a resolution of a stratified pseudomanifold $X$, if there is an isometric K\"ahler circle action generated by a Hamiltonian vector field $V$ that lifts to fiberwise linear action on $E$, there is an induced action $\sqrt{-1}L_V$ on sections in the domain. Here $\alpha$ corresponds to various choices of domains, where for instance $\alpha=0,1/2,1$ correspond to the maximal, vertical Atiyah-Patodi-Singer (VAPS) and minimal domains, which we introduce in Subsection \ref{subsection_abstract_hilbert_complexes}.
We define equivariant subcomplexes $\mathcal{P}^{\mu}_{\alpha}(X)$ by restricting the operator to eigensections of $\sqrt{-1}L_V$ with eigenvalue $\mu$. We also define local complexes $\mathcal{P}_{\alpha,B}(U_a)$ and local equivariant complexes $\mathcal{P}^{\mu}_{\alpha,B}(U_a)$ in \textit{fundamental neighbourhoods} $U_a$ of critical points $a$ of the K\"ahler Hamiltonian $h$, where the choice of domain indicated by $B$ and the cohomology groups depend on the normal form of the Hamiltonian near the critical point $h$.
Roughly, it is the restricted complex of $\overline{\partial}_E$ (respectively $\overline{\partial}^*_E$) in neighbourhoods where the metric gradient of $h$ is attracting (expanding), and a suitable product domain if there are directions in which the gradient flow of $h$ is both attracting and expanding restricted to a product decomposition $U_a=U_{a,s} \times U_{a,u}$ near critical points.
We denote the geometric endomorphism corresponding to the circle action by $T_{\theta}$, for $\theta \in S^1$.

\begin{remark}[convention]
\label{Remark_convention_isolated_fixed_points}
In this article we restrict to actions of compact connected Lie groups $G$ for which the action by generic elements have only isolated fixed points on $\widehat{X}$, and we refer to these as \textbf{actions with isolated fixed points}. It is clear that for any group action, the identity map induced by the action of the identity element of the group fixes the entire space, but our results apply as long as generic elements have isolated fixed points. 
\end{remark}

If the equivariant complexes $\mathcal{P}^{\mu}_{\alpha}$ are Fredholm for every $\mu$, then we call the complex $\mathcal{P}_{\alpha}$ \textit{\textbf{transversally Fredholm}}, and we will restrict our study to such complexes. Given a stratified pseudomanifold $\widehat{X}$ with isolated conic singularities the complexes $\mathcal{P}_{\alpha}(X)$ are Fredholm and that local complexes $\mathcal{P}_{\alpha,B}(U_a)$ are transversally Fredholm, as is also the case for $\alpha=1/2$, $\alpha=1$ for general singularities 
and we will formalize this in Proposition \ref{Proposition_equivariant_Fredholm_complexes}, and discuss the discreteness of the spectrum there as well.
It can be verified explicitly that the transversally Fredholm condition holds for many other examples of complexes with various constant values of $\alpha \in [0, 1]$.

Given such a complex, we have an associated \textit{Witten deformed complex}, where the Hilbert spaces on the space are the same, but the Dolbeault operator $P$ is deformed by a conjugation of $e^{\varepsilon h}$ where $\varepsilon$ is a parameter, yielding the deformed operator $P_\varepsilon$, and in particular the domains of the operators are different in general, though there is a canonical isomorphism given by the deformation. Taking the adjoint $P^*_\varepsilon$, we can construct a deformed Dirac-type operator $D_\varepsilon=P_\varepsilon+P^*_\varepsilon$, the square of which we call the deformed Laplace-type operator and denote it by $\Delta_{\varepsilon}$. We denote the Witten deformed complex by $\mathcal{P}_{\alpha,\varepsilon}(X)=(L^2\Omega^{0,\cdot}(X;E), \mathcal{D}_{\alpha}(P_{\varepsilon}), P_{\varepsilon})$, and the equivariant complexes by $\mathcal{P}^{\mu}_{\alpha,\varepsilon}(X)=(L^2_{\mu}\Omega^{0,\cdot}(X;E), \mathcal{D}^{\mu}_{\alpha}(P_{\varepsilon}), P_{\varepsilon})$. We note that the case where $\varepsilon=0$ corresponds to the \textit{undeformed} complex. We denote the corresponding local complexes by $\mathcal{P}^{\mu}_{\alpha,\varepsilon,B}(U_a)$. In Subsection \ref{subsection_deformed_Hilbert_complexes} we will show that the deformed and undeformed complexes are isomorphic. The following is the main theorem of this article, a restatement of Theorem \ref{theorem_small_eig_complex}.

\begin{theorem}[holomorphic Witten instanton complex]
\label{theorem_small_eig_complex_intro}

Let $X$ be the resolution of a stratified pseudomanifold of dimension $2n$ with a K\"ahler wedge metric and a stratified K\"ahler Hamiltonian Morse function $h$ corresponding to an isometric $S^1$ action generated infinitesimally by a wedge vector field $V$, where the metric is locally conformally totally geodesic at fundamental neighbourhoods of critical points of $h$. Let $E$ be a Hermitian vector bundle on $X$, to which the action lifts as a bundle action, yielding a geometric endomorphism $T^{\mathcal{P}_{\alpha}(X)}_{\theta}=T_{\theta}$ on the Dolbeault complex $\mathcal{P}_{\alpha,\varepsilon}(X)$ introduced above, which we assume is Fredholm, and where local complexes induced at isolated critical points are transversally Fredholm. Let $\mu$ be an eigenvalue of $\sqrt{-1}L_V$.

For any integer $0 \leq q \leq n$, let $
\mathrm{F}_{\alpha, \varepsilon, \mu, q}^{[0, \kappa]} \subset L^2_{\mu}\Omega^{0,q}(X;E)$ denote the vector space generated by the eigenspaces of $\Delta_{\varepsilon}$ associated with eigenvalues in $[0, \kappa]$.
Denoting $\kappa=\varepsilon^{f^+}$ there exists some $f^+ <1/2$ 
and some $\varepsilon_0>0$ such that when $\varepsilon>\varepsilon_0$, $\mathrm{F}_{\alpha, \varepsilon, \mu, q}^{[0, \kappa]}$ has the same dimension as 
\begin{equation}
    \sum_{a \in Cr(h)} \dim \mathcal{H}^q(\mathcal{P}^{\mu}_{\alpha,B,\varepsilon}(U_{a})),
\end{equation}
and form a finite dimensional subcomplex of $\mathcal{P}^{\mu}_{\alpha,\varepsilon}(X)$ :
\begin{equation}
    \label{small_eigenvalue_complex_intro}
\left(\mathrm{F}_{\alpha,\varepsilon,\mu, q}^{[0, \kappa]}, P_{\varepsilon}\right): 0 \longrightarrow \mathrm{F}_{\alpha,\varepsilon,\mu, 0}^{[0, \kappa]} \stackrel{P_{\varepsilon}}{\longrightarrow} \mathrm{F}_{\alpha,\varepsilon,\mu, 1}^{[0, \kappa]} \stackrel{P_{\varepsilon}}{\longrightarrow} \cdots \stackrel{P_{\varepsilon}}{\longrightarrow} \mathrm{F}_{\alpha,\varepsilon,\mu, n}^{[0, \kappa]} \longrightarrow 0
\end{equation}
which is quasi-isomorphic to $\mathcal{P}^{\mu}_{\alpha,\varepsilon}(X)$.
\end{theorem}

In the smooth setting, the result is implicit in \cite{wu1998equivariant}, where the \textit{\textbf{``smaller"}} eigenvalues in $[0, \kappa]$ are shown to be small with $f^+=-1/2$ using results in \cite{bismut1991complex} (see Remark \ref{Remark_operator_estimates_smooth}). We will discuss this complex in more detail in Subsection \ref{subsection_construction_summary_intro}.

In this setting, we denote by $T_{s,\theta}$ the endomorphism (not necessarily realizable as a geometric endomorphism) on the cohomology groups, which acts on sections in the cohomology group by $\lambda=se^{i\theta}$ (observe that $T_{\theta}=T_{1,\theta}$) where $\lambda \in \mathbb{C}^*$ (see Remark \ref{remark_local_extention_c_star}). 
Then we notate the generating series for the dimensions of the cohomology groups of each equivariant sub complex as 
\begin{equation}
\label{equation_renormalized_trace_definition}
    {Tr} (T_{s,\theta}|_{\mathcal{H}^{q}(\mathcal{P}_{\alpha,B}(U_a))}):= \sum_{\mu} \mathrm{dim}(\mathcal{H}^{q}(\mathcal{P}^{\mu}_{\alpha,B}(U_a)) \lambda^{\mu} 
\end{equation}
which is a power series that converges for $s<1$ (see Remark \ref{remark_convergence_of_generating_series}).
The following is a restatement of Theorem \ref{Theorem_strong_Morse_Dolbeault_version_2}.

\begin{theorem}[Strong form of the holomorphic Morse inequalities]
\label{Theorem_strong_Morse_Dolbeault_version_2_intro}
Let $X$ be the resolution of a stratified pseudomanifold of dimension $2n$ with a K\"ahler wedge metric and a stratified K\"ahler Hamiltonian Morse function $h$ corresponding to an isometric $S^1$ action generated infinitesimally by a wedge vector field $V$. Let $E$ be a Hermitian vector bundle on $X$, to which the action lifts as a fiberwise linear action, yielding a geometric endomorphism $T^{\mathcal{P}_{\alpha}(X)}_{\theta}=T_{\theta}$ on the Dolbeault complex $\mathcal{P}_{\alpha}(X)=(L^2\Omega^{0,\cdot}(X;E), \mathcal{D}_{\alpha}(P), P=\overline{\partial}_E)$ which we assume is Fredholm, and where local complexes induced at isolated critical points are transversally Fredholm. Let $\mu$ be an eigenvalue of $\sqrt{-1}L_V$. 
Then we have 
\begin{equation}
\label{equation_primitive_Morse_inequality_equivariant_1}
    \Big( \sum_{a \in Crit(h)}  \sum_{q=0}^n b^q \mathrm{dim}(\mathcal{H}^{q}(\mathcal{P}^{\mu}_{\alpha,B}(U_a)) \Big) = \sum_{q=0}^n b^q \mathrm{dim}(\mathcal{H}^{q}(\mathcal{P}^{\mu}_{\alpha}(X))) + (1+b) \sum_{q=0}^{n-1} Q^{\mu}_q b^q
\end{equation}
where $Q^{\mu}_q$ are non-negative integers.
Equivalently we have an equality of power series
\begin{equation}
\label{equation_Morse_inequalities_intro_11}
    \Big( \sum_{a \in Crit(h)}  \sum_{q=0}^n b^q {Tr} (T_{s,\theta}|_{\mathcal{H}^{q}(\mathcal{P}_{\alpha,B}(U_a))}) \Big) = \sum_{q=0}^n b^q {Tr} (T_{s,\theta}|_{\mathcal{H}^{q}(\mathcal{P}_{\alpha}(X))}) + (1+b) \sum_{q=0}^{n-1} Q_q b^q 
\end{equation}
where the $Q_q$ are power series in the variable $\lambda=se^{i\theta}$
converging for $|\lambda|=s<1$. The coefficients of this series are non-negative integers.
\end{theorem}

The theorem is referred to as an inequality since the terms $Q_q^{\mu}$ are non-negative integers. 
We call the terms on the left hand side of equation \eqref{equation_Morse_inequalities_intro_11} corresponding to each critical point $a$ as the \textit{\textbf{Morse polynomial at the critical point}}, and the sum over all critical points of those terms as the \textit{\textbf{Morse polynomial}}, while the first set of terms on the right hand side is called the \textit{\textbf{Poincar\'e polynomial of the complex}}, and the second set of terms, the \textit{\textbf{error polynomial}}. Here the polynomials are in the variable $b$, with coefficients that are power series. The following is a restatement of Theorem \ref{Theorem_dual_inequalities}.

\begin{theorem}
\label{Theorem_dual_inequalities_intro_version}
[Dual equivariant holomorphic Morse inequalities]
In the same setting as Theorem \ref{Theorem_strong_Morse_Dolbeault_version_2_intro}, let $(\mathcal{P}_{\alpha})_{SD}(X)$ be the Serre dual complex
\begin{equation}
    (\mathcal{P}_{\alpha})_{SD}(X):=\mathcal{R}_{\alpha}(X)=(L^2\Omega^{n,n-\cdot}(X;E^*),\mathcal{D}_{\alpha}(\overline{\partial}^*_{E^* \otimes K}),\overline{\partial}^*_{E^* \otimes K})
\end{equation}
and let $T^{\mathcal{R}_{\alpha}(X)}_{\theta}$ be the geometric endomorphism induced on this complex corresponding to the K\"ahler action, which we will denote as $T_{\theta}$ with some abuse of notation.
Then we have that
\begin{equation}
    b^n \Big( \sum_{a \in Crit(h)}  \sum_{q=0}^n b^{-q} Tr(T_{s,-\theta}|_{\mathcal{H}^{q}(\mathcal{R}_{\alpha,B}(U_a))}) \Big) = b^n \sum_{q=0}^n b^{-q} Tr(T_{s,-\theta}|_{\mathcal{H}^{q}(\mathcal{R}_{\alpha}(X))}) + (1+b) \sum_{q=0}^{n-1} \widetilde{Q}_q b^q 
\end{equation}
where the $B$ subscript denotes a choice of domain for the local complex corresponding to the K\"ahler hamiltonian $-h$ (corresponding to $T_{-\theta}$)
where the $\widetilde{Q}_q$ are power series in the variable $\lambda=se^{i\theta}$ which converge for $s=|\lambda|>1$. The coefficients of this series are non-negative integers.
Moreover
\begin{equation}
\label{equation_Poincare_polynomial_same_for_dual}
    \sum_{q=0}^n b^q Tr  (T_{1,\theta}|_{\mathcal{H}^{q}(\mathcal{P}_{\alpha}(X))})=b^n \sum_{q=0}^n b^{-q} Tr  (T_{1,-\theta}|_{\mathcal{H}^{q}(\mathcal{R}_{\alpha}(X))}).
\end{equation}
\end{theorem}

As before, we refer to the term on the left hand side as the \textit{\textbf{dual Morse polynomials}}, and the terms on the right as the \textit{\textbf{dual Poincar\'e polynomial}} and the \textit{\textbf{dual error polynomial}}.
These dual polynomials and the dual inequalities can equivalently be phrased in terms of the \textit{adjoint complexes} as well using the dualities in Proposition \ref{Proposition_duality_complex_conjugation_for_local_Lefschetz_numbers}.

Taken together these inequalities are very restrictive. Since equation \eqref{equation_Poincare_polynomial_same_for_dual} shows that the Poincar\'e polynomials are the same for the inequalities and the dual inequalities, and since the power series converge for opposite regimes of $s=|\lambda|$, one can conclude that only finitely many terms $c_{\mu}\lambda^{\mu}$ with common powers $\mu$ of $\lambda$ will appear in the Morse polynomial and the dual Morse polynomial, reducing it to what we call a \textit{\textbf{classical Morse polynomial}}.

\begin{definition}[Classical Morse polynomial]
\label{definition_classical_Morse_polynomial}
In the setting of Theorem \ref{Theorem_strong_Morse_Dolbeault_version_2_intro}, let $c_{\mu,q,1} (c_{\mu,q,2})$ be the coefficient of the power series with power $\lambda^{\mu}$ of the polynomial corresponding to the monomial $b^q$ of the Morse (dual Morse) polynomial. Let $c_{\mu,q}:=\min\{c_{\mu,q,1}, c_{\mu,q,2} \}$.
Then the classical Morse polynomial is 
\begin{equation}
    \sum_{q=0}^n \sum_{\mu} b^q c_{\mu,q} \lambda^{\mu}
\end{equation}
\end{definition}

The discussion immediately above this definition shows that for a given $q$, $c_{\mu,q} >0$ only for finitely many $\mu$, and that the \textit{\textbf{classical Morse inequality}}
\begin{equation}
    \sum_{q=0}^n \sum_{\mu} b^q c_{\mu,q} \lambda^{\mu}=\sum_{q=0}^n b^q Tr  (T_{1,\theta}|_{\mathcal{H}^{q}(\mathcal{P}_{\alpha}(X))}) + (1+b) \sum_{q=0}^{n-1} b^q d_{q,\mu} \lambda^{\mu}
\end{equation}
holds where $d_{q,\mu}$ are non-negative integers, positive only for finitely many $\mu$.
Then one can invoke the lacunary principle \cite[Theorem 3.39]{banyaga2004lectures} of Morse theory to get refined information on the cohomology groups.
We provide the example of a spinning sphere that Witten uses to illustrate this method in \cite{witten1984holomorphic}.

\begin{example}[Spinning the 2-sphere]
\label{Example_spinning_sphere_intro}
    Consider $\mathbb{CP}^1$ with projective coordinates $[Z_0:Z_1]$, equipped with the Hamiltonian $\mathbb{C}^*$ action $(\lambda)[Z_0:Z_1]=[\lambda Z_0: Z_1]$ where $\lambda=se^{i\theta} \in \mathbb{C}^*$. 
    This has two fixed points at $[0:1]$ and $[1:0]$.

    Let us consider the holomorphic Morse inequalities for the trivial bundle in the form of a generating series. The local cohomology (converging for $|\lambda|<1$) at $[0:1]$, is simply the $L^2$ bounded holomorphic functions in a neighbourhood of the fixed point and has Schauder basis $\{1,z,z^2,...\}$ where $z=Z_0/Z_1$.    
    The trace of the geometric endomorphism $T_{s,\theta}$ on the local cohomology group yields $\sum_{k=0}^{\infty} \lambda^k$  (converging for $|\lambda|<1$). Similarly the contribution $\sum_{k=1}^{\infty} \lambda^k$ is the trace of the geometric endomorphism over the local cohomology at $[1:0]$, which has a Schauder basis given by the anti-holomorphic one forms $\{\overline{y}^k\overline{dy}\}_{k \in \mathbb{N}}$ where $y=1/z$.   
    The equivariant holomorphic Morse inequalities for the trivial bundle are encoded by 
    \begin{equation}
    \label{equation_timbuktoo_1}
        \sum_{k=0}^{\infty} \lambda^k + b \sum_{k=1}^{\infty} \lambda^k=1 + (1+b) \sum_{k=1}^{\infty} \lambda^k.
    \end{equation}
        
    Similarly we can show that the dual Morse inequalities are encoded by
    \begin{equation}
    \label{equation_timbuktoo_2}
        b \sum_{k=1}^{\infty} \lambda^{-k} + \sum_{k=0}^{\infty} \lambda^{-k}=1 + (1+b) \sum_{k=1}^{\infty} \lambda^{-k},
    \end{equation}
    (converging for $|\lambda|>1$) and each term can be computed using explicit descriptions of the local cohomology groups as earlier.

    We see that the classical Morse polynomial (see Definition \ref{definition_classical_Morse_polynomial}) is simply $1$, whence the lacunary principle shows that this corresponds to the Poincar\'e polynomial (since there are no $(1+b)$ factors).    
\end{example}

Observe that the left hand side of equation \eqref{equation_timbuktoo_1} is a Laurent expansion for the first expression in
\begin{equation}
    \frac{1}{(1-\lambda)}+b\frac{\lambda}{(1-\lambda)}, \hspace{5mm} b\frac{\lambda^{-1}}{(1-\lambda^{-1})}+\frac{1}{(1-\lambda^{-1})}
\end{equation}
and substituting for $b$ by $1/b$ and multiplying by an overall factor of $-b=(-b)^n$ yields the second expression
which is the formal sum of the left hand side of equation \eqref{equation_timbuktoo_2}. 
The Morse polynomials and the dual Morse polynomials correspond to series expansions that converge for the different connected components of $\{ \lambda \in \mathbb{C} | |\lambda| \neq 1 \}$.
\textbf{We compute Morse polynomials for singular examples in Subsection \ref{Subsection_examples}.}

\begin{corollary}[Weak Morse inequalities]
\label{Corollary_weak_Morse_inequalities}
In the same setting as Theorem \ref{Theorem_strong_Morse_Dolbeault_version_2_intro}, we have that 
\begin{equation}
\label{equation_weakMorse_inequalities_1}
    \Big( \sum_{a \in Crit(h)}  \mathrm{dim}(\mathcal{H}^{q}(\mathcal{P}^{\mu}_{\alpha,B}(U_a)) \Big) \geq  \mathrm{dim}(\mathcal{H}^{q}(\mathcal{P}^{\mu}_{\alpha}(X))). 
\end{equation} 
\end{corollary}

This is immediate from the statement of Theorem \ref{Theorem_strong_Morse_Dolbeault_version_2_intro} since the coefficients of the power series of the error polynomial are positive, and can be equivalently stated in terms of power series as has been done for the smooth setting in \cite{mathai1997equivariant,wu1998equivariant}.
The Serre dual complex gives dual weak Morse inequalities.

When $b=-1$ the error polynomial vanishes and we recover the \textit{\textbf{holomorphic Lefschetz fixed point theorem}} that we proved in \cite{jayasinghe2023l2} when the cohomology of the complex matches that of the VAPS domain in the case of non-isolated singularities.
The direct sum of the cohomology groups of the Dolbeault complexes with minimal domain twisted by $E=\Lambda^p (^wT^* X^{1,0})$ for all $p=0,..,n$ is isomorphic to the de Rham cohomology on spaces satisfying the \textit{Witt condition}.

We proved the holomorphic Lefschetz fixed point theorem in \cite{jayasinghe2023l2} primarily for the VAPS domain for $\overline{\partial}_E$, which corresponds to $\alpha=1/2$ in this article, and observed that it holds for the cases of $\alpha=0$ (the maximal domain), and $\alpha=1$ (the minimal domain) for the case of isolated singularities together with considerations in \cite{Bei_2011_Perversities}.
Building on this we computed other equivariant invariants, including equivariant Hirzebruch $\chi_y$, signature invariants, where we restricted to the case when the cohomology of the VAPS domain equalled that of the minimal domain for the $\overline{\partial}_E$ domain. Here we study the minimal domain and certain other domains directly, and explore more general invariants in Subsections \ref{subsection_Poincare_Hodge_polynomial} and \ref{subsubsection_spin_Morse_inequalities}.

\begin{remark}
\label{remark_density_of_circle_actions}
The results that we prove extend naturally to actions of compact connected Lie groups, as in the proof of the theorem on page 22 of \cite{atiyah1970spin}, and the proof of Theorem 2.2 of  \cite{wu1996equivariant}. Therefore we study circle actions, for the most part. 

The two inequalities in Theorem \ref{Theorem_strong_Morse_Dolbeault_version_2_intro} and Theorem \ref{Theorem_dual_inequalities_intro_version} correspond to the decomposition of the Lie algebra of $S^1$, $\mathbb{R}$, into the two cones $\mathbb{R}^+ \cup \mathbb{R}^-$. 
For general Lie group actions we can obtain more restrictive inequalities using the inequalities corresponding to each cone of the Lie algebra following the approach in \cite{wu1996equivariant}, and we will study this in a follow up article. Related ideas are already used in the proof of Theorem \ref{Theorem_harmonic_spinors_vanish_singular}.
\end{remark}

In \cite{jayasinghe2023l2} we showed that under certain assumptions, the Lefschetz-Riemann-Roch formulas for Hilbert complexes that we proved correspond to those of Baum-Fulton-Quart \cite{baum1979lefschetz,Baumformula81}. 
Theorem \ref{Theorem_strong_Morse_Dolbeault_version_2_intro} gives the corresponding equivariant holomorphic Morse inequalities. It is natural to formulate holomorphic Morse inequalities in singular cohomology, and equivariant complexes studied more generally in \cite{baum1979lefschetz}, especially in light of more algebraic proofs in the smooth setting as in \cite{wu2003instanton}. We explore the conjectural holomorphic Morse inequalities 
in Subsection \ref{Subsection_examples}, along with a discussion of the smooth case and singular examples. We discussed holomorphic Lefschetz numbers in $L^2$ cohomology in great detail and in many examples in \cite{jayasinghe2023l2}, and we discuss enough details to enable one to work out the holomorphic Morse inequalities in those and similar examples.
Hilbert complexes which have Todd classes that match those of the complexes studied in \cite{baum1979lefschetz} have been constructed in \cite{LottHilbertconplex,RuppenthalSerre2018} and we briefly discuss Morse inequalities for these as well.

In Subsection \ref{subsection_Poincare_Hodge_polynomial}, we study the equivariant Poincar\'e Hodge polynomials corresponding to the Dolbeault complexes we study, in particular proving a Lefschetz-Hodge theorem in Theorem \ref{Theorem_L2_Lefschetz_Hodge_index}, and studying dualities.

We then turn our attention to rigidity results, starting with the following theorem. We note that $L^2$ de Rham cohomology is isomorphic to middle perversity intersection cohomology on these spaces. The following is a restatement of Theorem \ref{Theorem_instanton_correspondence_deRham_Dolbeault_non_intro}.

\begin{theorem}
\label{Theorem_instanton_correspondence_deRham_Dolbeault}
Let $X$ be a resolved stratified pseudomanifold with a wedge K\"ahler metric and a K\"ahler Hamiltonian morse function $h$. Then the Witten deformed de Rham complex in Theorem 6.33 of \cite{jayasinghe2023l2} is isomorphic as a Hilbert complex to the direct sum over $p$ of the Witten deformed complexes given in Theorem \ref{theorem_small_eig_complex_intro} for the equivariant Dolbeault complexes $^p\mathcal{P}^{\mu=0}_{\alpha}=(L^2\Omega_{\mu=0}^{p,\cdot}(X;\mathbb{C}),\mathcal{D}^{\mu=0}_{\alpha}(P),P)$ where $P=\overline{\partial}_{\Lambda^{p,0}}$ for any $\alpha$. In particular the Witten deformed de Rham complex inherits the Hodge bi-grading.
\end{theorem}

Basically this shows that for any of the domains we work with, the $\mu=0$ equivariant subcomplex is isomorphic to the de Rham complex, which in particular shows that the K\"ahler group actions we study act trivially on $L^2$ de Rham cohomology. The phenomenon that the index of operators restricted to equivariant Hilbert complexes vanishes for all $\mu \neq 0$ is usually called \textit{\textbf{rigidity}} (see, e.g. \cite{bott1989rigidity}). This is not the case for singular cohomology, as seen by the examples in Subsection \ref{subsubsection_conjectural_inequalities}, including the case of the cusp curve, and the variety $Z^4-X^3Y=0$ in $\mathbb{CP}^3$.

We denote the Dolbeault complexes $^p\mathcal{P}_{\alpha}=(L^2\Omega^{p,\cdot}(X;E),\mathcal{D}_{\alpha}(P),P)$ where $P_E=\overline{\partial}_{\Lambda^{p,0}\otimes E}$.
Given a circle action of the type that we study in this article, we define the \textbf{\textit{equivariant Poincar\'e Hodge polynomial}} for the Dolbeault complex to be 
\begin{equation}
\label{equivariant_Poincare_Hodge_definition}
    \chi_{y,b}(\mathcal{P}_{\alpha},T_{\theta}):=\sum_{p=0}^n y^p \sum_{q=0}^n b^q TrT_{\theta}|_{\mathcal{H}^{q}(^p\mathcal{P}_{\alpha})}
\end{equation}
for global complexes as well as local complexes $^p\mathcal{P}_{\alpha}(U)=(L^2\Omega^{p,\cdot}(U;E),\mathcal{D}_{\alpha}(P_U),P_U)$, where we abuse notation to denote the Dolbeault complexes for all $p$ with fixed $\alpha$ in the left hand side of the above equality. The following is a restatement of Theorem \ref{theorem_rigidity_Poincare_Hodge_non_intro}.

\begin{theorem}[Rigidity of Poincar\'e Hodge polynomials]
\label{theorem_rigidity_Poincare_Hodge}
In the same setting as Theorem \ref{Theorem_instanton_correspondence_deRham_Dolbeault}, for $E=\mathbb{C}$, $\alpha=1$ the \textbf{global equivariant Poincar\'e Hodge polynomials are rigid}. 
Moreover we have that 
\begin{equation}
\label{equation_Rigidity_deRham_1}
    \chi_{y,b}(\mathcal{P}^{\mu=0}_{\alpha}(X),T_{\theta})=\chi_{y,b}(\mathcal{Q}(X)):=\sum_{p,q=0}^n y^p  b^q \dim {\mathcal{H}^{p,q}(\mathcal{Q}(X))}
\end{equation}
and for fundamental neighbourhoods $U_a$ of critical points,
\begin{equation}
\label{equation_Rigidity_deRham_2_intro}
    \chi_{y,b}(\mathcal{P}^{\mu=0}_{\alpha,B}(U_a),T_{\theta})=\chi_{y,b}(\mathcal{Q}_B(U_a),T_{\theta})):=\sum_{p,q=0}^n y^p b^q \dim {\mathcal{H}^{p,q}(\mathcal{Q}_B(U_a))}
\end{equation}
for any $\alpha$,
and these satisfy {\textit{\textbf{de Rham type Poincar\'e Hodge inequalities}}} 
\begin{equation}
\label{equation_Rigidity_deRham_3_intro}
    \sum_{a \in crit(h)} \chi_{y,b}(\mathcal{Q}_B(U_a),T_{\theta}))=\chi_{y,b}(\mathcal{Q}(X))+(1+b) \sum_{q=0}^{n-1} R_q b^q,
\end{equation}
where $\mathcal{Q}$ is the Witten deformed de Rham complex with the Hodge bi-grading (see Remark \ref{Remark_minimal_domain_equivalences}).
\end{theorem}

In particular, \textbf{the formula in equation \ref{equation_Rigidity_deRham_3_intro} generalizes formulas in \cite{witten1982supersymmetry} for the signature invariant in the smooth setting when there are Killing vector fields on a smooth orientable manifold. This is when $b=-1, y=1$.}
In the smooth setting this formula for the signature in the presence of Killing vector fields has contributions of $\pm 1$ at zeros of the vector field. This is not always the case in the singular setting, as we show in Example \ref{example_conifold_Poincare_poly} where there is an isolated fixed point contributing $0$.
The case of $y=-1$ captures the Morse inequalities for the de Rham complex in \cite{jayasinghe2023l2}.

Rigidity is a crucial ingredient in the study of the equivariant signature theorem, and formulas derived using it are of great interest in mathematics and physics. In Subsection \ref{subsection_Nut_charge_signature} we present a \textbf{third formula for the equivariant signature}, following ideas in \cite{gibbons1979classification} that were used by Gibbons and Hawking in classifying gravitational instantons, and we extend formulas used to compute \textbf{NUT charge} contributions for certain gravitational actions and entropy functionals by them.

In Subsection \ref{subsubsection_spin_Morse_inequalities} we explore the rigidity of complexes twisted by fractional powers of the canonical bundle $K^{1/N}=L$ when such line bundles exist, the case where $N=2$ corresponding to spin Dirac complexes.
We show in Proposition \ref{Proposition_action_lifts_to_lifted_bundle} that given an isometric K\"ahler Hamiltonian $S^1$ action on a stratified pseudomanifold with a wedge K\"ahler metric, the group action lifts to one on $L$, after which we prove the following result (a restatement of Theorem \ref{Theorem_harmonic_spinors_vanish_non_intro}) in the smooth case.

\begin{theorem}
\label{Theorem_harmonic_spinors_vanish}
Let $X$ be a \textbf{smooth} K\"ahler manifold with a Hermitian line bundle $L$ such that $L^{\otimes N}=K_X$ where $K_X$ is the canonical bundle, where $N>1$ is an integer. Let $E=L^{k}$, where $0< k <N$ is an integer. We consider the complex $\mathcal{P}=(L^2\Omega^{0,q}(X;L), \mathcal{D}_{\alpha}(P),P)$ where $P=\overline{\partial}_E$.

If there is a K\"ahler action of a compact connected Lie group G with isolated fixed points (with at least one fixed point), then the cohomology of the Dolbeault complex twisted by the bundle $E$ vanishes in all degrees. In particular the twisted Dirac operator is invertible.
\end{theorem}

We also prove a generalization to singular spaces for the case of the spin Dirac operator in Theorem \ref{Theorem_harmonic_spinors_vanish_singular}, with additional assumptions, where we study torus actions instead of circle actions.

Lastly we study \textbf{k-Rarita Schwinger} (k-RS) operators in Subsection \ref{subsection_RS_ops}, which cover various notions of Rarita-Schwinger operators studied in the literature, including \cite{witten1983fermion} where the Witten deformation of Dolbeault complexes was first studied.
The value of $k=-1$ gives the operator in \cite{witten1983fermion,alvarez1984gravitational}, while that for $k=+1$ gives the version in \cite{HommaSemmelmannRaritaSchwinger2019,RafeManyRarita2021}. We extend a formula in \cite{witten1983fermion} to the case of general $k$ after correcting a minor typo, and investigate Morse inequalities, and extensions to singular spaces.

We end this summary of results by observing that many other equivariant formulas including those used to compute various partition functions in physics (some of which were briefly studied in \cite{jayasinghe2023l2}) can now be generalized to singular spaces with the work in this article, some of which we discuss in Subsection \ref{subsection_background_related_results}.

\subsection{Summary of construction of the Witten instanton complex}
\label{subsection_construction_summary_intro}

The proof of Theorem \ref{Theorem_strong_Morse_Dolbeault_version_2_intro} hinges on an analytic construction of a \textit{\textbf{holomorphic Witten instanton complex}} in Theorem \ref{theorem_small_eig_complex_intro} which builds on ideas in the smooth setting developed in \cite{witten1984holomorphic,bismut1991complex,wu1998equivariant} with a few technical twists.
Our construction of the instanton complex in the $L^2$ de Rham case for stratified spaces with wedge metrics in \cite{jayasinghe2023l2} was based on those ideas, with differences arising in technicalities.

Roughly the idea is as follows.
The null spaces of model operators in neighbourhoods of critical points of K\"ahler Hamiltonian Morse functions for \textit{Witten deformed} Laplace-type operators $\Delta_{\varepsilon}$ of local complexes are shown to be isomorphic to \textit{local cohomology groups} of the Hilbert complexes restricted to fundamental neighbourhoods of fixed points. In the smooth case, estimates are used to show that the small eigenvalue eigensections of $\Delta_{\varepsilon}$ on unions of local neighbourhoods approximate the global eigensections of 
$\Delta_{\varepsilon}$ on $X$ with eigenvalues which decay (of order $\varepsilon^{-1}$) as $\varepsilon$ goes to $\infty$, while the other eigenvalues of $\Delta_{\varepsilon}$ grow of order $\varepsilon$ at the local and global levels. This yields a spectral gap $[C_1\varepsilon^{-1}, C_2 \varepsilon]$ for fixed $C_1,C_2 >0$ for $\Delta_{\varepsilon}$ on $X$.

In the setting of this article we prove weaker estimates which give a much weaker spectral gap, namely that there is a subcomplex of the Witten deformed Dolbeault complex that corresponds to eigensections where the eigenvalues grow slower than  $\varepsilon^{2f}$ where $f<1/2$ as $\varepsilon$ goes to $\infty$. This yields a spectral gap $[C_1\varepsilon^{2f}, C_2 \varepsilon]$ for fixed $C_1,C_2 >0$ for $\Delta_{\varepsilon}$ which we use to construct a \textit{\textbf{``smaller" eigenvalue complex/ Witten instanton complex}}.
This suffices for the proof of the Morse inequalities.

The main ingredients are the following results.

\begin{enumerate}
    \item Proposition \ref{Proposition_model_spectral_gap_modified_general}, a spectral gap result for the model harmonic oscillator on tangent cones, for equivariant complexes.       

    \item A \textit{polynomial expansion} of the operator near the critical points of the K\"ahler Morse Hamiltonian, given in Lemma \ref{Lemma_operator_estimates}. This step was be avoided in \cite{jayasinghe2023l2} for the de Rham case by perturbing the metric near the critical points to be the model metric.

    \item Estimates for certain \textit{global} operators in Proposition \ref{proposition_Zhangs_Morse_inequalities_intermediate_estimates} obtained using information from the analysis of the local operator. Estimates in Proposition \ref{Propostion_growth_estimate_witten_deformed} showing the vanishing of harmonic forms of the deformed operator away from critical points.

    \item Parametric resolvent estimates in Lemma \ref{Lemma_inequality_spectral_for_Witten_deformation} and Proposition \ref{Proposition_small_eig_estimate_and_dimension} showing that the dimension of the vector space of approximate solutions is equal to the dimension of the vector space of \textit{``smaller"} eigenvalue eigensections, leading to the construction of the Witten instanton complex in Theorem \ref{theorem_small_eig_complex_intro}.
\end{enumerate}

The first ingredient follows from an explicit description of an orthonormal basis of eigensections on the tangent cone using separation of variables and Sturm-Liouville theory, together with a scaling argument. The exact computation of an orthonormal basis of eigensections was carried out in detail for the case of the Dolbeault complex in \cite{jayasinghe2023l2} without Witten deformation. The case of the de Rham complex was handled there as well.

The key ingredients necessary for the second numbered ingredient were proven in the smooth setting in \cite{bismut1991complex} in a generality that holds for Morse-Bott critical point sets, where the asymptotics of the metric in the smooth setting allows one to prove much stronger estimates for the Dirac-type operator. 
In fact we keep careful track of the constants in the crucial estimates in Section \ref{Section_proof_of_main_result}. In particular for wedge metrics that are asymptotically $\delta$ with $\delta \geq 1$ there are bona fide \textit{\textbf{small eigenvalue complexes}}, with the small eigenvalues going to $0$ for $\delta$ strictly greater than $1$. We also see that for locally conformally product type metrics where the conformal factor is radial (see Definition \ref{definition_locally_conformally_pt_wedge}) that there is a small eigenvalue complex. \textit{All the examples we study in this article can be easily seen to be of this type}.

The third and fourth numbered ingredients are straightforward generalizations of results for the de Rham case in \cite{jayasinghe2023l2,Zhanglectures} for equivariant complexes, accommodating the polynomial approximation of the operator. The third ingredient can be thought of as the \textit{glue} that uses the local model operator and its properties to prove estimates for the global operator.

\subsection{Background and related results}
\label{subsection_background_related_results}

The Atiyah-Bott-Berligne-Vergne localization theorem and similar results relate important global and local quantities. Supersymmetric localization, equivariant cohomology, Bott residue formulae and the Atiyah-Bott-Lefschetz fixed point theorem are all instances of such results, and are tightly woven into the fabric of many braches of mathematics and physics (see \cite[\S 1.2]{jayasinghe2023l2} for a survey). Ideas of Witten were key in understanding the role of supersymmetry in such localization results. A large number of localization results have been proven for $\mathbb{Z}_2$ graded Dirac type operators, related to equivariant K-theory and K-homology (see \cite{baum1979lefschetz}), even when there are richer $\mathbb{Z}$ gradings of complexes involved.

While studying supersymmetry breaking (\cite{witten1981dynamical,witten1982constraints}), Witten gave a groundbreaking take on Morse theory in \cite{witten1982supersymmetry}, where the $\mathbb{Z}$ grading of the de Rham complex plays a more prominent role as opposed to results such as the Lefschetz fixed point theorem (which can be thought of as $\mathbb{Z}_2$ graded versions). 
In \cite{bott1988morse} Bott suggested the name \textit{Witten complex} (we followed Zhang's nomenclature in \cite[\S 6.3]{Zhanglectures}, c.f. \cite{wu2003instanton}) and discusses the importance of uniting ideas in physics such as tunneling and topology.
Witten went beyond the case of the de Rham complex and formulated results for other operators such as the signature operator when there are Killing vector fields on orientable spaces, outlining novel proofs for the Hopf-index theorem and for formulas for the equivariant signature.
In \cite{witten1984holomorphic} he expanded on these ideas for the case of the Dolbeault complex on K\"ahler manifolds with Hamiltonian circle actions acting by isometry, which were also used to study Fermion Quantum numbers in Kaluza Klein reduction in \cite{witten1983fermion}.

The technique of Witten deformation in the smooth complex setting was used in the study of Quillen metrics in \cite{bismut1991complex} by Bismut and Lebeau, from which many techniques and results were used in the proof of the holomorphic Morse inequalities in the four articles \cite{mathai1997equivariant,wu1996equivariant,wu1998equivariant,wu2003instanton} authored by Wu, Mathai and Zhang. Some pertinent results of \cite{bismut1991complex} in the smooth case were worked out for non-isolated fixed point sets which are Morse-Bott critical points of Hamiltonians. In the case of stratified spaces there are natural Hamiltonian actions which have fixed point sets which are stratified subspaces, which lead to additional technical complications and we postpone a treatment of that case to a later article.

In the case of $S^1$ actions, we have inequalities and dual inequalities. These correspond to the two action chambers of the Lie algebra $\mathbb{R}$ of $S^1$ in the formulation for more general compact connected Lie groups $G$ in \cite{wu1996equivariant} acting on smooth spaces. There are no extra analytic considerations in the proof of this more general situation even in the singular case, and we focus on the analysis in this article. 

In \cite[\S 4]{wu1996equivariant}, it was shown that the strong form of the inequalities do not hold for general Dolbeault-Dirac complexes for actions that do not preserve a K\"ahler structure with an explicit counterexample, 
for which the weak form of the inequalities in Corollary \ref{Corollary_weak_Morse_inequalities} continue to hold. In \cite{wu2003instanton} the strong form of the inequalities were proven for group actions on complex manifolds that correspond to a Bialynicki-Birula decomposition of the space.

These techniques have been used to prove localization results for spin$^{\mathbb{C}}$ Dirac complexes on symplectic (and even almost complex manifolds \cite{ParadanLocRiemRoch2001}) manifolds and prove versions of the Guillemin-Sternberg conjecture (quantization commutes with reduction) (see \cite{vergne1994quantification,meinrenken1998symplectic,tian1998holomorphic,TianZhangQUANTIZATION1998,braverman2000index}). The equivariant localization formulas for spin$^{\mathbb{C}}$ Dirac complexes in \cite{jayasinghe2023l2} are more general in the case of isolated fixed points since we did not assume that the actions were of group actions which were compact connected, or that they were (generalized) Hamiltonian.

In \cite{Kirwan_Morse_21}, the Morse inequalities have been extended to smooth manifolds with critical point sets that are stratified spaces.
Morse inequalities for the de Rham complex on spaces with conic and conformally conic metrics have been studied in \cite{LudwigMorse2013,ludwig2013analytic} by constructing an instanton complex, and in \cite{Jesus2018Wittens,Jesus2018Wittensgeneral} avoiding it.

In \cite{jayasinghe2023l2}, we constructed the de Rham instanton complex for Witt spaces with wedge metrics which were of \textit{product type} near neighbourhoods of critical points of the stratified Morse functions considered in that article. That assumption can be removed using the technical results in Section \ref{Section_proof_of_main_result}.
This is useful in extending studies such as those in \cite{DangRiviere2021Witten} where the Witten deformed Laplace type operators are studied without perturbing the metric. The methods developed in this article are related to semi-classical analysis on stratified spaces (where the standard semi-classical parameter corresponds to $\hbar=1/\varepsilon$ where $\varepsilon$ is the deformation parameter in this article), for instance as in \cite{DangRiviere2021Witten} and references therein including connections to Policott Ruelle resonances and analytic torsion via the Freed conjecture.  Analytic torsion and the Cheeger-M\"uller theorem have also been studied widely using the techniques of Witten deformation following the work of \cite{BismutZhangCheegerMuller1992} (see also \cite{ludwig2020extension} in the singular setting). 

The techniques developed here are also related to techniques needed to generalize the asymptotic holomorphic Morse inequalities of Demailly, which have been investigated on some spaces with singularities and with singular bundle metrics (see \cite{MarinescuHolMorse2023,MaMarinescu2007HolMorseBook}). Observations of Demailly in \cite{demailly1991holomorphic} can be used to see that candidate asymptotic holomorphic Morse inequalities can be formulated as soon as there is a Hirzebruch-Riemann-Roch formula, which for the VAPS domain ($\alpha=1/2$) is given by the index theorem in \cite{Albin_2017_index}. This is similar to how the candidate equivariant holomorphic Morse inequalities can be formulated once there is a Lefschetz-Riemann-Roch theorem (see Subsection \ref{subsubsection_conjectural_inequalities}).

In the setting of Theorem \ref{Theorem_strong_Morse_Dolbeault_version_2_intro}, the holomorphic Morse inequalities are a generalization of the holomorphic Lefschetz fixed point theorem. Thus it can be used to strengthen certain results that are usually tackled by the Lefschetz fixed point theorem. This includes questions of quantization commutes with reduction, which in \cite{braverman1999holomorphic,tian1998holomorphic}, where certain inequalities associated to reduction were proven to be equalities. 

Since Witten deformation is easier to use than heat kernel techniques, it has also been used to study spin$^{\mathbb{C}}$ Dirac operators and quantization, even for transversally elliptic operators on non-compact smooth manifolds (see for instance \cite{TianZhangQUANTIZATION1998,tian1998holomorphic,braverman2000index,zhang1998holomorphic}).
Novikov generalized Witten deformation in the de Rham case by taking closed one forms $dh$ which are not necessarily exact (where $h$ is Morse) and a version for the Dolbeault complex has been investigated in \cite{JesusGilkeylocalDolbeault2021}. Moreover there are deformations of Dolbeault complexes related to Landau-Ginzburg models \cite{witten1993phases,dai2023witten}.

Localization formulas are crucial to understand and compute various partition functions that are widely studied in mathematical physics \cite{pestun2017localization}, including the Nekrasov partition function (\cite{nekrasov2003seiberg,NekrasovABCDinsta}).
The ideas of Witten deformation compatible with equivariant actions were also used in the thesis of Pestun (see footnote 6 of \cite{pestun2012localization} and relevant deformations) in computing instanton contributions to path integrals of integrable quantum field theories, and his method (dubbed Pestunization) have been extended broadly since then (see, e.g., \cite{festuccia2020transversally,festuccia2020twisting,hekmati2023equivariant,mauch2022index}). In \cite{jayasinghe2023l2} we discussed extensions of such theories to the singular setting, how some symmetries of path integrals that are of physical importance fail to hold on certain singular spaces, and the work in this article brings us a step closer to more rigorous studies of related work in the singular setting (see Remark \ref{Remark_instanton_computations}). Briefly, the self dual and anti-self dual complexes have the same (equivariant) indices as $1/2(\chi_{-1} \pm  \chi_{1})$, which can be algorithmically computable with the techniques we introduce on various spaces. These formulas also show up in the study of BPS invariants and more sophisticated studies of blackhole entropy related to supersymmetry and the ADS/CFT correspondence (see, e.g., \cite[\S 4.2]{gupta2015supersymmetric}) than what we consider in subsection \ref{subsection_Nut_charge_signature}.

We prove our main results under the assumption that the complexes for the algebraic domains are transversally Fredholm. Similar domains have been studied for other operators in \cite{Vertman2015Heatalgebraic,VertmanBahaudDryden2015Heatalgebraic} at the global level, and it is plausible that similar techniques can be used to show that the complexes we study here are transversally Fredholm, if not Fredholm. 

In the smooth setting Witten uses the holomorphic Morse inequalities to prove that for smooth manifolds admitting Hamiltonian K\"ahler circle actions with isolated fixed points, $\mathcal{H}^{p,q}(X)=0$ for $p \neq q$ (in fact Carrell and Liebermann had proven this for arbitrary holomorphic circle actions on K\"ahler manifolds \cite{carrell1973holomorphic}).
This is an analog of the Bochner-Kodaira-Nakano estimates on smooth manifolds, which shows that assumptions on the curvature of line bundles imply vanishing results on the cohomology of bundles. 
For instance is well known that if there is a trivializing section of the canonical bundle of a smooth K\"ahler manifold $X^{2n}$ which gives rise to a non-trivial element in $\mathcal{H}^{0,n}(X)$, then it is a Calabi-Yau admitting a Ricci flat metric and does not admit K\"ahler circle actions.

Perhaps the simplest example of rigidity on smooth manifolds is for the global Lefschetz number of the trivial bundle of a complex manifold for a holomorphic circle action, which follows by results of \cite{carrell1973holomorphic} (c.f., page 330 \cite{witten1984holomorphic}) described above. Rigidity for the structure sheaf fails for certain non-normal algebraic varieties in the Baum-Fulton-MacPherson theory, and for domains other than the minimal domain (see Examples 7.33 and 7.37 of \cite{jayasinghe2023l2}, Subsection \ref{subsubsection_conjectural_inequalities}).

In \cite{witten1983fermion} Witten shows how to deform twisted Dolbeault complexes to compute the equivariant indices of spin Dirac and Rarita-Schwinger operators, and explores rigidity questions following \cite{atiyah1970spin}, exploring consequences in supergravity, continued in later work (see for instance \cite{alvarez1984gravitational} where the equivariant index and the spin Dirac and Rarita-Schwinger operators are key characters). Motivated by Witten's questions rigidity phenomena were studied by Landweber and Stong, leading to their elliptic genus and Witten's conjectures on the rigidity of the Dirac operator on loop spaces (see \cite{landweber1988circle,zagier1988note,bott1989rigidity}).

Our proof of the vanishing of harmonic spinors using the holomorphic Morse inequalities fits into a circle of results related to positive scalar curvature, group actions and harmonic spinors. It is well known that the Lichnerowicz inequality implies that there are no harmonic spinors on spaces with positive scalar curvature metrics, and it is known that isometric group actions are abundant on geometries with positive and zero scalar curvature.
The relation between fixed points, spinors and positive scalar curvature has been explored, for instance in \cite{wiemeler2016circle,lawson1974scalar}.

In the smooth setting, rigidity results on complete intersections have been studied in \cite{dessai2017complete} where the existence of circle actions on spaces with positive curvature metrics is explored.
One can use arguments similar to those in the proof of Theorem \ref{Theorem_harmonic_spinors_vanish_singular} for complete intersections with canonical singularities and positivity conditions for the canonical bundle to study the rigidity of Dolbeault complexes twisted by other powers of the canonical bundle.
Elliptic genera have been studied on singular spaces with restrictions on the singularities in \cite{weber2016equivariant,donten2018equivariant,borisov2003elliptic,BurtTotaro2000singularelliptic}, but as we observed in \cite{jayasinghe2023l2}, these are different from the $L^2$ versions in general. Rigidity and dualities of $\chi_y$ invariants that can be proven from properties of elliptic genera in the smooth setting hold for $L^2$ $\chi_y$ invariants with the minimal domain for the Dolbeault complex. 

Many important theories in mathematics and physics take the fact that cohomology is rigid under the action of continuous Lie groups for granted, not true in localization theories generalizing formulas of \cite{baum1979lefschetz}, for instance in \cite{equivariant_intersection_edidin}.
The rigidity of the signature complex is intimately tied to the fact that group actions act trivially on the cohomology of the de Rham complex on smooth manifolds, as observed by Witten in \cite[\S 3]{witten1982supersymmetry} where he explains how this leads to important formulas in mathematics and physics for the signature in terms of the weights at fixed points.
This is an important difference between $L^2$ cohomology and singular cohomology on singular spaces, in light of Theorem \ref{Theorem_instanton_correspondence_deRham_Dolbeault} and counterexamples in singular cohomology we study in this article. We explore this in Subsection \ref{subsection_Nut_charge_signature} as well (see Remarks \ref{Remark_considerations_integrals_cup_products}, and \ref{Remark_formulas_NUTs_other_cohom_theory}). In particular we show that some important equivariant invariants can be computed independent of the choices of domain (see Remark \ref{Remark_equivariant_volumes}) and can be computed even using the algebraic tools in \cite{baum1979lefschetz,equivariant_intersection_edidin}.

Witten also highlights the importance of picking orientations in deriving his signature formulas (see Remark \ref{Remark_orientation_signature_Hamiltonian}) and we observe that there are many publications which have sign errors in important formulas due to failures in accounting for co-orientations of fixed point sets and critical point sets (see Remark \ref{Remark_orientation_issues}, \cite{Orientation_Morse_Bott_Rot_2016}).

We use rigidity of $L^2$ cohomology to prove Theorem \ref{theorem_rigidity_Poincare_Hodge} establishing rigidity for the $\chi_{y,b}$ polynomials, generalizing results in \cite{KosniowskiLefschetzApplication1970}, and \cite[\S 3]{atiyah1970spin} for complex manifolds in our singular K\"ahler setting, and we observe that similar arguments together with the holomorphic Lefschetz fixed point theorem in \cite{jayasinghe2023l2} can be used to establish rigidity of the $\chi_{y}$ invariant for Hamiltonian actions when there is only an almost complex structure, and when the actions are locally K\"ahler Hamiltonian.
It also generalizes some formulas studied by Witten for the signature computed in terms of local quantities at fixed points, and give a third formula for the index in Subsection \ref{subsection_Nut_charge_signature} extending work of \cite{gibbons1979classification}, where we extend formulas for NUT charges that appear in the gravitational action and entropy functionals on gravitational instantons. We chose this example since it showcases how paying closer attention to the equivariant formulas, already studied and used in that article, would have made certain sign errors clear.
We observe in Remark \ref{Remark_orientation_issues} that the cumbersome coordinate computations used in \cite{gibbons1979classification} leads to sign issues which should have been clear to the authors already by crucial physical arguments drawn from the equivariant signature formula that they already make in that highly influential article.

The more recent study of gravitational instantons in \cite{aksteiner2023gravitational} uses methods and results in \cite{Donghooncircleactions_2018} which are proven as consequences of rigidity, and gives a correct derivation of the formulas which we emphasize were already clear from the equivariant signature formula. 
We also observe that while the K\"ahler case seems special, conjecture 3 of \cite{aksteiner2023gravitational} shows that for certain physical applications it suffices to understand such equivariant formulae for Hermitian metrics, which can be obtained using results in \cite{jayasinghe2023l2}.

Many physical consequences are drawn from such formulae and there is significant interest in generalizing this to singular spaces, both compact and non-compact with many interesting work done by Hawking and his collaborators on those general settings (see \cite{toricgravitationalinstantonsBiquardGauduchon_2023,de2023comments} and references in the latter). One sees glimpses of derivations of equivariant quantities in sections 5, 6 of \cite{gibbons1979classification}, and relevant invariants getting physical labels (see ``mass" of the bolt in equation (6.6), the weights of actions corresponding to surface gravities).

The equivariant $\chi_{y,b}$ polynomials, equivariant $\chi_y$ polynomials and equivariant signature invariants appear in many related applications including classifying group actions with prescribed numbers of fixed points \cite{wiemeler2016circle}, group actions on 4 manifolds \cite{Donghooncircleactions_2018} in addition to classifying gravitational instantons.
The ideas of \cite{gibbons1979classification} now appear in the study of supergravity theories that embrace the now better understood theory of equivariant localization on orbifolds \cite{Equivariant_loc_Sparks_2024}, and extending such work to stratified pseudomanifolds will require a more careful study of localization as we discuss in Subsection \ref{subsection_Nut_charge_signature}.

The Rarita-Schwinger operator is also important in the study of supergravity and to understand gravitational anomalies (\cite{witten1983fermion,alvarez1984gravitational}). There the lower order parts of the Rarita-Schwinger operator are not accounted for since only the indices are studied, and the local contributions at fixed points are computed using localization formulas for twisted Dolbeault complexes.

There are various versions of operators called Rarita-Schwinger operators studied in the literature (see for instance \cite{HitchinStableforms2001,HommaSemmelmannRaritaSchwinger2019,RafeManyRarita2021}) where the differences of the indices of the relvant operators can be attributed to various choices of gauge fields and we refer to the appendix of \cite{freed2002k} for an explanation. The results written for versions of these operators studied independently cannot be used directly for other versions (though they can certainly be adapted), and can lead to confusion due to the use of the same name for different operators. Perhaps one also has to be careful about whether the number of elements in the kernel of the Rarita-Schwinger operator is affected by the lower order terms since only the index is a stable homotopy invariant.
Our investigation of these operators is simple, and adopts methods in \cite{witten1983fermion} for the study of equivariant indices for other versions, and we indicate how some methods extend to singular spaces.
These methods may not be useful for versions on odd dimensional spaces such as studied in 11 dimensional $M$-theory.
Eta invariants and boundary conditions for various versions of Rarita-Schwinger operators are also studied in various contexts (see \cite{bar2022boundary,debray2022anomaly}) and since local Lefschetz numbers can be written in terms of equivariant eta invariants (see \cite[\S 3]{weiping1990note} for the Spin Dirac operator on a cone) these can be computed using the localization formulas we present in our singular setting.

\begin{justify}
\textbf{Acknowledgements:} 
I thank my advisor Pierre Albin for many discussions on the subject of this article, for ideas and valuable comments.
A reading group on the Cheeger-M\"uller theorem organized by him proved to be useful, and I thank all participants. Many conversations with Gabriele La Nave and Hadrian Quan on wedge symplectic structures proved useful in this work.
I thank Dan Berwick-Evans, Eugene Lerman, Nachiketa Adhikari, and Donghoon Jang for some useful discussions. I thank Jes\'us \'Alvarez L\'opez for his interest and comments on this work. I thank Edward Witten for enlightening answers for some questions on Rarita Schwinger operators, and thank Lashi Bandara and Alberto Richtsfeld as well for useful discussions on the topic.
I thank Dhantha Gunarathna for hosting me in the summer of 2023 when part of this work was developed.
I was partially supported by Pierre Albin's NSF grant DMS-1711325.
\end{justify}

\section{Background}

In this section, we review stratified spaces and their resolutions, as well as wedge K\"ahler structures and the spin$^{\mathbb{C}}$ Dirac operator, before discussing local structures in neighbourhoods of fixed points of the circle actions that we study.

\subsection{Stratified pseudomanifolds with K\"ahler structures}

We review some of the background for stratified spaces and wedge K\"ahler geometry, referring the reader to \cite{Albin_2017_index} and \cite[\S 2]{jayasinghe2023l2} for more details.

\subsubsection{Stratified pseudomanifolds}

All topological spaces we consider will be Hausdorff, locally compact topological spaces with a countable basis for its topology. Recall that a subset $W$ of a topological space $V$ is locally closed if every point $a \in W$ has a neighborhood $\mathcal{U}$ in $V$ such that $W \cap \mathcal{U}$ is closed in $\mathcal{U}$. A collection ${S}$ of subsets of $V$ is locally finite if every point $v \in V$ has a neighborhood that intersects only finitely many sets in ${S}$.

A Thom-Mather stratified pseudomanifold is a topologically stratified space (see Definition 4.11 of \cite{Kirwan&woolf_2006_book}) with additional conditions and we refer the reader to \cite[\S 5]{veronabook_1984} for more details on Thom-Mather stratified spaces.

\begin{definition}
    \label{Thom-Mather stratified space}    
A Thom-Mather stratified space $\widehat{X}$ is a metrizable, locally compact, second countable space which admits a locally finite decomposition into a union of locally closed strata $\mathcal{S}(\widehat{X})=\left\{Y_{\alpha}\right\}$, where each $Y_{\alpha}$ is a smooth manifold, with dimension depending on the index $\alpha$. We assume the following:
\begin{enumerate}
    \item If $Y_{\alpha}, Y_{\beta} \in \mathcal{S}(X)$ and $Y_{\alpha} \cap \overline{Y_{\beta}} \neq \emptyset$, then $Y_{\alpha} \subset \overline{Y_{\beta}}$.

    \item Each stratum $Y$ is endowed with a set of `control data' $T_{Y}, \pi_{Y}$ and $\rho_{Y}$; here $T_{Y}$ is a neighbourhood of $Y$ in $X$ which retracts onto $Y, \pi_{Y}: T_{Y} \longrightarrow Y$ is a fixed continuous retraction and $\rho_{Y}: T_{Y} \rightarrow[0,2)$ is a `radial function' in this tubular neighbourhood such that $\rho_{Y}^{-1}(0)=Y$. Furthermore, we require that if $\widetilde{Y} \in \mathcal{S}(X)$ and $\widetilde{Y} \cap T_{Y} \neq \emptyset$, then
\begin{equation}    
    \left(\pi_{Y}, \rho_{Y}\right): T_{Y} \cap \widetilde{Y}  \setminus Y \longrightarrow Y \times[0,2)
\end{equation}
is a proper differentiable submersion.
\item If $W, Y, \widetilde{Y} \in \mathcal{S}(X)$, and if $p \in T_{Y} \cap T_{\widetilde{Y}} \cap W$ and $\pi_{\widetilde{Y}}(p) \in T_{Y} \cap \widetilde{Y}$, then $\pi_{Y}\left(\pi_{\widetilde{Y}}(p)\right)=\pi_{Y}(p)$ and $\rho_{Y}\left(\pi_{\widetilde{Y}}(p)\right)=\rho_{Y}(p)$.

\item If $Y, \widetilde{Y} \in \mathcal{S}(X)$, then
$$
\begin{aligned}
Y \cap \overline{\widetilde{Y}} \neq \emptyset & \Leftrightarrow T_{Y} \cap \widetilde{Y} \neq \emptyset, \\
T_{Y} \cap T_{\widetilde{Y}} \neq \emptyset & \Leftrightarrow Y \subset \overline{\widetilde{Y}}, Y=\widetilde{Y} \quad \text {or } \widetilde{Y} \subset \overline{Y} .
\end{aligned}
$$

\item For each $Y \in \mathcal{S}(X)$, the restriction $\pi_{Y}: T_{Y} \rightarrow Y$ is a locally trivial fibration with fibre the cone $C\left(Z_{Y}\right)$ over some other stratified space $Z_{Y}$ (called the \textbf{\textit{link}} over $Y)$, with atlas $\mathcal{U}_{Y}=\{(\phi, \mathcal{U})\}$ where each $\phi$ is a trivialization 
\begin{equation}
\label{equation_chartlike_map}
\pi_{Y}^{-1}(\mathcal{U}) \rightarrow \mathcal{U} \times C\left(Z_{Y}\right),    
\end{equation}
and the transition functions are stratified isomorphisms 
of $C\left(Z_{Y}\right)$ which preserve the rays of each conic fibre as well as the radial variable $\rho_{Y}$ itself, hence are suspensions of isomorphisms of each link $Z_{Y}$ which vary smoothly with the variable $y \in \mathcal{U}$.

If in addition we let $\widehat{X}_{j}$ be the union of all strata of dimensions less than or equal to $j$, and require that 

\item $\widehat{X}_{n-1}=\widehat{X}_{n-2}$ and $\widehat{X} \backslash \widehat{X}_{n-2}$ is dense in $\widehat{X}$, 
then we say that $\widehat{X}$ is a stratified pseudomanifold.
\end{enumerate}
\end{definition}

We remark that this ensures that $\widehat{X}_j \backslash \widehat{X}_{j-1}$ is a smooth manifold of dimension $j$, and the connected components of this are called the strata of \textit{\textbf{depth}} $j$. We define the \textit{\textbf{regular part}} of $\widehat{X}$ to be $X^{reg}:=\widehat{X}^n \setminus \widehat{X}^{n-2}$. 
The definition shows that it is natural to study the topology of these spaces using iterated conic metrics, which we call wedge metrics and introduce more formally later in this section.
The following definition is based on the first proposition of \cite[\S 4.1]{goresky1980intersection}.
\begin{definition}
\label{definition_normal_pseudomanifold}
    Given a stratified pseudomanifold $\widehat{X}$ of dimension $n$, if the link at each $x \in \widehat{X}_{n-2}$ is connected, it is called a \textit{\textbf{normal pseudomanifold}}.
\end{definition}
Goresky and MacPherson explore this definition in \cite[\S 4]{goresky1980intersection} to which we refer the reader. The metric completion of $\widehat{X}^{reg}$ with respect to a wedge metric corresponds to the topological normalization of non-normal pseudomanifolds. This topological normalization is unique for a given pseudomanifold.
We explored how the $L^2$ Atiyah-Bott-Lefschetz fixed point theorem holds without the normal assumption in \cite{jayasinghe2023l2} and the constructions in this article hold without it as well.
There is a functorial equivalence between Thom-Mather stratified spaces and manifolds with corners and iterated fibration structures (see Proposition 2.5 of \cite{Albin_signature}, Theorem 6.3 of \cite{Albin_hodge_theory_cheeger_spaces}).

\subsubsection{Manifolds with corners with iterated fibration structures}

In section 1 of \cite{Albin_2017_index}, there is a detailed description of iterated fibration structures on manifolds with corners. We explain some of these structures from said article which we will use and refer the reader to the source for more details. In \cite{kottke2022products}, these are referred to as manifolds with fibered corners that are interior maximal.

An n-dimensional manifold with corners $X$ is an $n$-dimensional topological manifold with boundary, with a smooth atlas modeled on $(\mathbb{R}^+)^n$ whose boundary hypersurfaces are embedded. We denote the set of boundary hypersurfaces of $X$ by $\mathcal{M}_1(X)$. A collective boundary hypersurface refers to a finite union of non-intersecting boundary hypersurfaces.

%++++++++++++++++++++++++++++++++++++++++++++++++++++++++++++++++++++++
% Iterated fibration structures
\begin{definition}
\label{iterated_fibration_structure}
An \textit{\textbf{ iterated fibration structure}} on a manifold with corners $X$ consists of a collection of fiber bundles
\begin{center}
    $Z_Y -\mathfrak{B}_Y \xrightarrow{\phi_Y} Y$
\end{center}
where $\mathfrak{B}_Y$ is a collective boundary hypersurface of $X$ with base and fiber manifolds with corners such that:

\begin{enumerate}
    \item Each boundary hypersurface of $X$ occurs in exactly one collective boundary hypersurface $\mathfrak{B}_Y$.
    \item If $\mathfrak{B}_Y$ and $\mathfrak{B}_{\widetilde{Y}}$ intersect, then dim $Y \neq$ dim $\widetilde{Y}$, and we write $Y < \widetilde{Y}$ if dim $Y < \text{dim} \widetilde{Y}$.
    \item If $Y < \widetilde{Y}$, then $\widetilde{Y}$ has a collective boundary hypersurface $\mathfrak{B}_{Y\widetilde{Y}}$ participating in a fiber bundle $\phi_{Y\widetilde{Y}} : \mathfrak{B}_{Y\widetilde{Y}} \rightarrow Y$ such that the diagram 
    
\[\begin{tikzcd}
	{\mathfrak{B}_Y \cap \mathfrak{B}_{\widetilde{Y}}} && {\mathfrak{B}_{Y\widetilde{Y}} \subseteq \widetilde{Y}} \\
	& Y
	\arrow["{\phi_{\widetilde{Y}}}", from=1-1, to=1-3]
	\arrow["{\phi_Y}"', from=1-1, to=2-2]
	\arrow["{\phi_{Y\widetilde{Y}}}", from=1-3, to=2-2]
\end{tikzcd}\]
commutes.
\end{enumerate}
\end{definition}

%+++++++++++++++++++++++++++++++++++++++++++++++++++++++++++++++++++++++++
The base can be assumed to be connected but the fibers are in general disconnected.
As we mentioned above, there is an equivalence between Thom-Mather stratified spaces and manifolds with corners with iterated fibration structures. 

If we view a cone over a link $Z$ as the quotient space of $[0,1]_x \times Z$ under the identification where the points of the link at $\{x=0\} \times Z$ are identified, then the quotient map is a blow-down map.
More generally, there is an inductive desingularization procedure which replaces the Thom-Mather stratified space with a manifold with corners with iterated fibration structures.
This corresponds to a \textit{\textbf{blow-down}} map $\beta : X \rightarrow \widehat{X}$, which satisfies properties given in Proposition 2.5 of \cite{Albin_signature}.
See Remark 3.3 of \cite{Albin_2017_index} for an instructive toy example.

Under this equivalence, the bases of the boundary fibrations correspond to the different strata, which we shall denote by
\begin{equation}
    \mathcal{S}(X) = \{Y: Y \textit{ is the base of a boundary fibration of X} \}.
\end{equation}
The bases and fibers of the boundary fiber bundles are manifolds with corners with iterated fibration structures (see for instance Lemma 3.4 of \cite{albin2010resolution}). The condition dim $Z_Y > 0$ for all $Y$ corresponds to the category of pseudomanifolds within the larger category of stratified spaces. The partial order on $\mathcal{S}(X)$ gives us a notion of depth
\begin{center}
$ depth_X(Y) = max \{ n \in \mathbb{N}_0 : \exists Y_i \in \mathcal{S}(X)$ s.t. $Y=Y_0 <Y_1 < ... <Y_n \}$.
\end{center}
The depth of $X$ is then the maximum of the integers $depth_X(Y)$ over $Y \in \mathcal{S}(X)$. 

We now introduce some auxiliary structures associated to manifolds with corners with iterated fibration structures.
If $H$ is a boundary hypersurface of $X$, then because it is assumed to be embedded, there is a smooth non-negative function $\rho_H$ such that 
$\rho_{H}^{-1}(0)=H$ and $d\rho_H$  does not vanish at any point on $H$. We call any such function a \textbf{\textit{boundary defining function for $H$}}.
For each $Y \in \mathcal{S}(X)$, we denote a \textbf{\textit{collective boundary defining function}} by
\begin{center}
    $\rho_Y = \prod_{H \in \mathfrak{B}_Y} \rho_H$,
    and by
    $\rho_X = \prod_{H \in \mathcal{M}_1(X)} \rho_H$
\end{center}
 a \textbf{\textit{total boundary defining function}}, where $\mathcal{M}_1(X)$ denotes the set of boundary hypersurfaces of $X$.
%++++++++++++++++++++++++++++++++++++++++++++++++++++++++++++++++++++++
%collared iterated fibration structure.

%++++++++++++++++++++++++++++++++++++++++++++++++++++++++++++++++++++++
When describing the natural analogues of objects in differential geometry on singular spaces, the iterated fibration structure comes into play. For example,
\begin{equation}
\label{equation_smooth_functions_on_stratified_spaces}
\mathcal{C}_{\Phi}^{\infty}(X)= \{ f \in \mathcal{C}^{\infty}(X) : f \big|_{\mathfrak{B}_Y} \in 
\phi_{Y}^*\mathcal{C}^{\infty}(Y) \text{ for all } Y \in \mathcal{S}(X) \}
\end{equation}
corresponds to the smooth functions on X that are continuous on the underlying
stratified space. 

%There are more refined versions of boundary defining functions such as \textit{good} ones in \cite{kottke2022products}, important for considerations that we do not study in this article.

\subsubsection{Wedge metrics and related structures}
\label{subsubsection_wedge_metrics_related_structures}

On Thom-Mather stratified pseudomanifolds we can define metrics which are \textit{locally conic}. For instance, consider the model space $\widehat{X}=\mathbb{R}^k \times {Z}^+$, where ${Z}^+$ is a cone over a smooth manifold $Z$. The resolved manifold with corners that corresponds to the blowup of this space is $X=\mathbb{R}^k \times [0,\infty)_x \times Z$. We have the model wedge metric
\begin{equation}
\label{equation_definition_homogeneous_metric_cone}
    g_{w} = g_{\mathbb{R}^k} + dx^2 + x^2 g_Z,
\end{equation}
which is a product metric on the product space $X^{reg}$ and which we call a  \textbf{\textit{rigid/ product type}} wedge metric, degenerating as it approaches the stratum at $x=0$. These metrics are degenerate as bundle metrics on the tangent bundle but we can introduce a rescaled bundle on which they are non-degenerate. 

Formally, we proceed as follows. Let $X$ be a manifold with corners and iterated fibration structure. Consider the ‘wedge one-forms’
\begin{center}
    $\mathcal{V}^{*}_{w}= \{ \omega \in \mathcal{C}^{\infty}(X; T^*X) : \text{ for each } Y \in \mathcal{S}(X), \text{  }i^*_{\mathfrak{B}_Y} \omega (V) =0 \text{ for all } V \in \text{ker } D\phi_Y \}$.
\end{center}
We can identify $\mathcal{V}^{*}_{w}$ with the space of sections of a vector bundle which we call $\prescript{{w}}{}{T}^*X$, the wedge cotangent bundle of X, together with a map
\begin{equation}
    i_{w} : \prescript{{w}}{}{T}^*X \rightarrow T^*X
\end{equation}
that is an isomorphism over $X^{reg}$ such that,
\begin{center}
    $(i_{w})_*\mathcal{C}^{\infty}(X;\prescript{{w}}{}{T}^*X) =  \mathcal{V}^{*}_{w} \subseteq \mathcal{C}^{\infty}(X; T^*X)$.
\end{center}
In local coordinates near the collective boundary hypersurface, the wedge cotangent bundle is spanned by
\begin{center}
    $ dx$, $xdz$, $dy$
\end{center}
where $x$ is a boundary defining function for $\mathfrak{B}_Y$, $dz$ represents covectors along the fibers and $dy$ covectors along the base.
The dual bundle to the wedge cotangent bundle is known as the wedge tangent bundle, $\prescript{{w}}{}{T}X$. It is locally spanned by
\begin{center}
    $\partial_x$, $\frac{1}{x}\partial_z$, $\partial_y$
\end{center}
A \textit{\textbf{wedge metric}} is simply a bundle metric on the wedge tangent bundle.

The notion of a \textit{\textbf{wedge differential operator}} 
$P$ of order $k$ acting on sections of a vector bundle $E$, taking them to sections of a vector bundle $F$ is described on page 11 of \cite{Albin_2017_index}. We first define the \textit{\textbf{edge vector fields on X}} by
\begin{center}
    $\mathcal{V}_e = \{ V \in  \mathcal{C}^{\infty}(X;TX): V \big|_{\mathfrak{B}_Y}$ is tangent to the fibers of $\phi_Y$ for all $Y \in \mathcal{S}(X) \}$.
\end{center}
There is a rescaled vector bundle that is called the \textit{\textbf{edge tangent bundle}} $\prescript{e}{}TX$ together with a natural vector bundle map $i_e : \prescript{e}{}TX \rightarrow TX$ that is an isomorphism over the interior and satisfies
\begin{center}
    $(i_e)_* \mathcal{C}^{\infty}(X;\prescript{e}{}TX)=\mathcal{V}_e$.
\end{center}
In local coordinates near a point in $\mathfrak{B}_Y, (x, y, z)$,
a local frame for $\prescript{e}{}TX$ is given by 
\begin{center}
    $ x\partial_x, x\partial_y, \partial_z$
\end{center}
Note that the vector fields $x\partial_x$ and $x\partial_y$ are degenerate as sections of $TX$, but not as sections of $\prescript{e}{}TX$. % If $Y = \{pt\}$ then the edge tangent bundle coincides with the b-tangent bundle.

The universal enveloping algebra of $\mathcal{V}_e$ is the ring $\text{Diff}^*_e(X)$ of edge differential operators. That is, these are the differential operators on $X$ that can be expressed locally as finite sums of products of elements of $\mathcal{V}_e$. They have the usual notion of degree and extension to sections of vector bundles, as well as an edge symbol map defined on the edge cotangent bundle (see \cite{Mazzeo_Edge_Elliptic_1,Albin_signature,Albin_hodge_theory_cheeger_spaces}).
Similarly, $\text{Diff}^*_e(X; E, F)$ denotes the edge differential operators acting on sections of a vector bundle $E$ and taking them to a sections on a vector bundle $F$.
The edge symbol (which can be defined on the space of edge pseudo-differential operators in general) is used to define the notion of ellipticity used in this article. We follow \cite{Albin_2017_index} and define the map
\begin{equation}
    \overline{\sigma_k} : \text{Diff}^k_e(X;E,F) \rightarrow \rho_{RC}^{-k}\mathcal{C}^{\infty}(RC(\prescript{e}{}T^*X), \pi^*hom(E,F))
\end{equation}
which is the usual symbol map where $RC(\prescript{e}{}T^*X)$ denotes the radial compactification of the edge cotangent bundle and $\pi: \prescript{e}{}T^*X \rightarrow X$ is the projection map. We denote by $\rho_{RC}$ a boundary defining function for the
boundary at radial infinity. Multiplying $\overline{\sigma_k}$ by $\rho_{RC}^k$ 
the resulting map $\sigma_k$ is called the \textbf{\textit{edge symbol}} (see Section 3.3 of \cite{Albin_2017_index}).
This fits into the following short exact sequence
\begin{equation}
\label{edge symbol sequence}
    0 \rightarrow \text{Diff}^{k-1}_e(X;E,F) \rightarrow \text{Diff}^{k}_e(X;E,F) \xrightarrow{\sigma} \mathcal{C}^{\infty}(^e\mathbb{S}^*X,\pi^*hom(E,F)) \rightarrow 0.
\end{equation}
If the edge symbol of a differential operator is invertible away from the zero section we call such an operator  \textbf{\textit{edge elliptic}}.
We define \textit{\textbf{wedge differential operators}} by 
\begin{equation*}
    \text{Diff}^k_w(X;E,F) = \rho^{-k}_X \text{Diff}^k_e(X;E,F)
\end{equation*}
following, e.g., \cite{Albin_2017_index}, where $\rho_X$ is a total boundary defining function for $X$. 
If a wedge differential operator $D_k$ of order $k$ can be written as $\rho_X^{-k}A_k$ where $A_k$ is an edge elliptic differential operator of order $k$, then $D_k$ is said to be a  \textbf{\textit{wedge elliptic}} operator.

\subsubsection{Asymptotically wedge metrics and K\"ahler structures}
\label{subsection_asymptotics_metric}

An \textit{\textbf{asymptotically $\delta$ wedge metric}} is defined inductively. On depth zero spaces, which are just smooth manifolds, a wedge metric is a Riemannian metric. Assuming we have defined asymptotically $\delta$ wedge metrics on spaces of depth less than $k$, let us assume $X$ has depth $k$.
A wedge metric $g_w$ on $X$ is an 
asymptotically $\delta$ wedge metric if, for every $Y \in \mathcal{S}(X)$ of depth $k$ there is a collar neighbourhood $\mathscr{C}(\mathfrak{B}_Y) \cong [0,1)_x \times \mathfrak{B}_Y$ of $\mathfrak{B}_Y$ in $X$, a metric $g_{w, pt}$ of the form
\begin{equation}
    g_{w, pt} = dx^2 +x^2 g_{\mathfrak{B}_Y/Y} +\phi^*_Yg_Y
\end{equation}
where $g_Y$ is an asymptotically $\delta$ wedge metric on Y, $g_{\mathfrak{B}_Y/Y}+\phi^*_Yg_Y$ is a submersion metric for $\mathfrak{B}_Y \xrightarrow{\phi_Y} Y$ and $g_{\mathfrak{B}_Y/Y}$ restricts to each fiber of $\phi_Y$ to be an asymptotically $\delta$ wedge metric on $Z_Y$ and
\begin{equation}
\label{equation_structure of the metric}
    g_{w} - g_{w, pt} \in x^{2\delta} \mathcal{C}^{\infty} (\mathscr{C}(\mathfrak{B}_Y); S^2(^{w}T^*X))).
\end{equation}
If at each step  $g_{w} = g_{w, pt}$, we say $g_w$ is a \textit{\textbf{rigid}} or \textit{\textbf{product-type wedge metric}}. Off of these collar neighborhoods of the stratum of depth $k$, the form of the metric is fixed by the induction as a $\delta$ wedge metric of lower depth.

\begin{remark}
\label{remark_totally_geodesic_wedge}
In \cite{Albin_2017_index} the case of $2\delta=1$ (called \textit{\textbf{exact wedge metrics}}) and $2\delta=2$ (called totally geodesic wedge metrics) were studied, the distinction only relevant in Getzler rescaling arguments at the strata.
\begin{comment}
 If at each step  $g_{w} = g_{w, pt}$, we say $g_w$ is a \textit{\textbf{rigid}} or \textit{\textbf{product-type wedge metric}}. If at every step, $g_{w} - g_{w, pt} = \mathcal{O}(x)$ as a symmetric two-tensor on the wedge tangent bundle, we say $g_w$ is an . 
We work with exact wedge metrics in this article.   
\end{comment}
\end{remark}

Given a stratified pseudomanifold with a wedge metric, a complex structure on the wedge tangent bundle is called a \textit{\textbf{wedge complex structure}}.

\begin{definition}[wedge K\"ahler structures]
\label{definition_wedge_kahler_structure}
    If there is a symplectic form $\omega$ on $X^{reg}$ extending to a non-degenerate closed wedge two form such that it is tamed by a wedge complex structure $J$, yielding a wedge metric $g=\omega(J \cdot, \cdot)$ on $X$, then we say that $(g,\omega,J)$ is a \textit{\textbf{wedge K\"ahler structure}} on $X$.
\end{definition}

As in the smooth case, any two of the three constituents of a triple $(g,\omega,J)$ determines the third.
Projective algebraic varieties with wedge metrics such as conifolds and even the cusp curve, as we studied in \cite[\S 7.3]{jayasinghe2023l2} are examples of such spaces.

We briefly review some background on conic K\"ahler metrics, referring to \cite[\S 7.1]{jayasinghe2023l2,boyer&galicki,dragomir2007differential,blair2010riemannian} for more details on complex structures on cones and their CR structures, as well as Sasaki structures of K\"ahler cones.

It is well known that a complex structure on a cone $(C(Z), dx^2 +x^2 g_Z)$ induces a CR structure on the link $(Z,g_Z)$, which in particular yields an almost contact metric structure on the link if it is smooth (see for instance Section 1.1.4 of \cite{dragomir2007differential}). In the case where the CR structure is pseudoconvex, the almost complex structure is a contact structure. 
A Sasaki structure on $M$
is equivalent to a K\"ahler structure on the metric cone over $M$.
In this case, the CR structure is pseudoconvex and has a contact structure. Taking the quotient by the action of the Reeb vector field $\xi$, one gets (in general) an orbifold $\Sigma = Z / \xi$ which has a K\"ahler structure, usually known as the transversal K\"ahler structure on the Sasaki manifold. The Reeb foliation $\mathcal{F}_\xi$ on $Z$ happens to be a taut Riemannian foliation, and the contact distribution has a splitting into holomorphic and anti holomorphic components. 
The K\"ahler form can be written as 
\begin{equation}
    2dx \wedge x\alpha + x^2 d\alpha
\end{equation}
where $\alpha$ is a contact form. Here $d\alpha$ is a transversal K\"ahler form that is non-degenerate on the contact distribution $TZ/T\mathcal{F}_{\xi}$.
This is studied broadly in complex and CR geometry and we refer to chapter 7 of \cite{boyer&galicki} for more details.
The K\"ahler form of a disc $2dr \wedge rd\theta$ corresponds to a \textit{trivial} Sasaki structure on the circle.

In the case where the boundary CR structure is pseudoconvex, it is known that the cohomology of the Dolbeault complex on a smooth manifold with boundary is finite dimensional in all degrees greater than $0$ (see Theorem 5.3.8 \cite{chen2001partial}). While local cohomology vanishes for positive degrees for fundamental neighbourhoods of the tangent space in the smooth case, this is not so in the singular space, as we showed in Proposition 7.1 of \cite{jayasinghe2023l2}.
There are many examples of spaces which have such higher cohomology groups, and some broadly studied examples are conifolds. Reeb vector fields and Lefschetz fixed point theorems for such spaces have been explored in the literature using orbifold resolutions in \cite{Martellisparksyau08,NekrasovABCDinsta}, and we studied the $L^2$ cohomology perspective in \cite[\S 7.3.4]{jayasinghe2023l2}.

A wedge symplectic form is a closed wedge 2 form, that is a smooth section $\omega$ of $\Lambda^2(^{w}T^*X)$ which satisfies $d\omega=0$.
On a collar neighbourhood 
\begin{equation}
\label{equation_collar_neighbourhood_asymptotics}
    \mathscr{C}(\mathfrak{B}_Y) \cong [0,1)_x \times \mathfrak{B}_Y
\end{equation}
of $\mathfrak{B}_Y$ in $X$, a closed wedge 2 form of the form 
\begin{equation}
    \omega_{w,pt}= dx \wedge x\alpha +x^2 \alpha +\phi^* \omega_Y
\end{equation}
is called a \textbf{\textit{product type wedge symplectic structure}}, where $\alpha$ is a contact form for the link $Z$ and $\omega_Y$ is a 2 form on $Y$.
In this article we consider \textbf{\textit{asymptotically $\delta$ wedge symplectic structures}} $\omega$ which satisfy
\begin{equation}
\label{equation_structure_of_the_symplectic_form}
    \omega - \omega_{w, pt} \in x^{2\delta} \mathcal{C}^{\infty} (\mathscr{C}(\mathfrak{B}_Y); \Lambda^2(^{w}T^*X))).
\end{equation}
for some $\delta>0$ in a collar neighbourhood as described in equation \eqref{equation_collar_neighbourhood_asymptotics}.

An integrable almost complex structure on the wedge tangent bundle is called a wedge complex structure.
Given a triple $(g_{w,pt},J_{w,pt},\omega_{w,pt})$ where $g_{w,pt}$ is a product wedge metric and $\omega_{w,pt}$ is a wedge symplectic structure, and $J_{w,pt}$ is a wedge complex structure such that $g_{w,pt}(J_{w,pt} \cdot, \cdot) = \omega_{w,pt} (\cdot, \cdot)$, we say that $J_{w,pt}$ is a \textit{\textbf{product type wedge complex structure}}. 

Using a connection in a collar neighbourhood as described in equation \eqref{equation_collar_neighbourhood_asymptotics} where we can extend the fibration $\phi$ and fix a splitting, we can write this in coordinates as  
\begin{equation}
    J_{w,pt}=\partial_x \otimes x\alpha - \frac{1}{x} \partial_{\xi} \otimes dx + J_{TZ/T\mathcal{F}_{\xi}} + J_Y.
\end{equation}
Here $x\alpha$ is a unit wedge covector field on $C(Z)$, $\frac{1}{x} \xi$ is the dual wedge vector field with respect to a product type metric $g_{w,pt}$ on the collar neighbourhood, $J_{TZ/T\mathcal{F}_{\xi}}$ is an almost complex structure on the distribution $TZ/T\mathcal{F}_{\xi}$ where $\mathcal{F}_{\xi}$ is the integrable foliation generated by the flow of the vector field $\xi$ on $Z$, and where $J_Y$ is a complex structure on $TY$ restricted to the stratum.
Given a metric product of $\mathbb{C}^k \times C(Z)$ where $Z$ has a Sasaki structure, the corresponding K\"ahler structure is a product type wedge complex structure, where $\alpha$ is a contact form and $\xi$ is the Reeb vector field for the Sasaki structure and $J_{TZ/T\mathcal{F}_{\xi}}$ is an almost complex structure on the contact distribution.

In this article we consider wedge complex structures $J$ where
\begin{equation}
\label{equation_structure_of_the_complex_structure}
    J - J_{w, pt} \in x^{2\delta} \mathcal{C}^{\infty} (\mathscr{C}(\mathfrak{B}_Y); End(^{w}TX)))
\end{equation}
for some $\delta>0$, and we call them \textbf{\textit{asymptotically $\delta$ wedge complex structures}}.

\subsection{Spin$^\mathbb{C}$ Dirac operator.}
\label{subsection_spin_c_Dirac}

We follow the conventions in \cite{wu1998equivariant}, referring to chapters 3 and 5 of \cite{duistermaat2013heat} for a more detailed construction of the spin$^\mathbb{C}$ Dirac operator where in the K\"ahler case this is also called the Dolbeault-Dirac operator.

Given a wedge K\"ahler metric, the complexified wedge tangent bundle $^{w} T_{\mathbb{C}}X$ has an orthogonal splitting into the holomorphic and anti-holomorphic tangent bundles as $^wTX^{1,0} \oplus ^wTX^{0,1}$ induced by the wedge complex structure.
Given $v \in \Gamma(^{w} T_{\mathbb{C}}X)$ where $^{w} T_{\mathbb{C}}X=^{w} TX \otimes_{\mathbb{R}} \mathbb{C}$,
we can write it as $v=v^{1,0}+v^{0,1}$ where $v^{1,0} \in \Gamma(^{w} TX^{1,0})$ and $v^{0,1} \in \Gamma(^{w} TX^{0,1})$. Then the Clifford action of the complexified Clifford algebra is defined as
\begin{equation*}
cl(v^{1,0})=\sqrt{2}(v^{1,0})^{*} \wedge, \quad cl(v^{0,1})=-\sqrt{2} i_{v^{0,1}},
\end{equation*}
where $(v^{1,0})^{*}=\xi^{0,1} \in \Gamma(^{w} T^*X^{0,1})$ corresponds to $v^{1,0}$ via the K\"ahler wedge metric $g_w$. Given a Hermitian bundle $E$ on $X$ equipped with a compatible connection, this extends to a Clifford action on $End(F)$ where $F= \oplus_{q=0}^n \Lambda^{q} (^{w} TX^{1,0}) \otimes E$ by $cl \otimes Id$, which we denote by $cl$ with abuse of notation. 

Given wedge vector fields $v, e \in \Gamma(^{w} T_{\mathbb{C}}X)$, we have the anti-commutation relation $\{cl(v), cl(e)\}=-2 g(v, e)$. The holomorphic Hermitian connection on $^{w}TX \otimes \mathbb{C}$ corresponds to the Levi-Civita connection on $^{w}TX$, and we choose a compatible connection on $F=\Lambda^{q} (^{w} TX^{1,0}) \otimes E$ which we denote by $\nabla^F$. Composing $\nabla^F$ with the Clifford action we obtain the spin$^\mathbb{C}$ Dirac operator $D$ acting on sections supported on $\mathring{X}$.
Given a (local) orthonormal frame of the wedge tangent bundle $\{e_1, e_2, \ldots, e_{2n} \}$, we can write
\begin{equation}
    D=\sum_{i=1}^{2n} cl(e_{i}) \nabla^F_{e_{i}}.
\end{equation}
It is easy to check that the symbol of this operator is given by Clifford multiplication and as a result it is wedge elliptic, and we refer the reader to \cite[\S 2]{jayasinghe2023l2} for more details.

If, for some $Y\in \mathcal{S}(X)$, we restrict to a collar neighborhood of $\mathfrak{B}_Y$ as in Subsection \ref{subsection_asymptotics_metric} where we have an asymptotically $\delta$ wedge metric, the spin$^\mathbb{C}$ Dirac operator takes the form
\begin{equation}
\label{temp_lah_di_dah}
cl(\partial_x)\nabla^F_{\partial_x} + \sum_i {cl}(\frac{1}{x}\partial_{z_i})\nabla^F_{\frac{1}{x}\partial_{z_i}} + \sum_j {cl}(\partial_{y_j})\nabla^F_{\partial_{y_j}}
\end{equation}
up to a differential operator in $x^{\delta}\text{Diff}^1_w(X;F)$.
Here $x$ is a boundary defining function for $\mathfrak{B}_Y$, and we recognize \eqref{temp_lah_di_dah} as a wedge differential operator of order one. We observe that $D$ in general is not a wedge differential operator because it is only of the form in equation \eqref{temp_lah_di_dah} up to operators $x^{\delta}\text{Diff}^1_w(X;F)$ where $\delta$ need not be an integer.

It is well known that in the K\"ahler case we can write the operator as $D=\sqrt{2}(\overline{\partial}+\overline{\partial}^*)$ where
\begin{equation}
\label{equation_d_bar_spin_c_correspondence_1}
\bar{\partial}=\frac{1}{2 \sqrt{2}} \sum_{i=1}^{2 n} cl(e_i-\sqrt{-1} J e_i) \nabla^F_{e_i}, \quad \bar{\partial}^{*}=\frac{1}{2 \sqrt{2}} \sum_{i=1}^{2 n} cl(e_i+\sqrt{-1} J e_i) \nabla^F_{e_i}.
\end{equation}
Here $J$ is a complex structure that tames the wedge K\"ahler form.
We will study self adjoint extensions of this symmetric operator in Section \ref{Section_Hilbert_complexes}.

\begin{remark}[Conformal invariance of Dirac operators]
\label{Remark_conformal_invariance_Dirac}
It is well known that Dirac operators 
are conformally convariant, and we can identify the local cohomology groups of Dirac operators when there are conformal changes of the metric, say by a factor of a function $f$. This is well known for the Dolbeault Dirac operator and even the $\overline{\partial}$ operator on smooth complex manifolds, where the space of holomorphic functions can be identified on conformal complex manifolds.

We refer to \cite[\S 4]{conformalspinHijazi_1986} for a study of the conformal covariance of the spin Dirac operator on smooth spaces, extending for more general twisted Dolbeault-Dirac operators.
In our singular setting, a similar analysis holds on $X^{reg}$ which is dense in $X$, which can be used to see the conformal invariance of Spin Dirac operators on $X$.   
\end{remark}

\subsection{Fundamental neighbourhoods of fixed points}
\label{subsection_local_structures}

In this article we study self maps with isolated fixed points and focus a lot on neighbourhoods of such fixed points.
Given any point $a$ of a stratified pseudomanifold $\widehat{X}$, we can find a neighbourhood $\widehat{U_a}$ which has a homeomorphism as in equation \eqref{equation_chartlike_map}
\begin{equation}
\label{fundamental neighbourhood}
   \widehat{\phi}:\widehat{U_a} \longrightarrow  \widehat{\phi}(\widehat{U_a}) \subset \mathbb{R}^k_{y} \times \widehat{Z}^+_{z}
\end{equation}
where the image is bounded.
We call such a neighbourhood $\widehat{U_a}$ a \textit{\textbf{fundamental neighbourhood of $a$}}. Here $\widehat{Z}_{z}$ is another stratified space in general, and $\widehat{Z}^+$ is the infinite cone over this link. We can choose $\widehat{U_a}$ and $\phi$ such that $\phi(\widehat{U_a})=\mathbb{D}^{k} \times \widetilde{C}(\widehat{Z})$, where by $\widetilde{C}(\widehat{Z})$ we denote the truncated cone $[0,1]_x \times \widehat{Z}_z / _\sim $ where the points at $\{x=0\}$ are identified. The restriction of a product type wedge metric to such a neighbourhood is as we discussed in Subsection \ref{subsubsection_wedge_metrics_related_structures}.

\begin{definition}
\label{definition_smooth_boundary_and singularities}
Given such a neighbourhood $\widehat{U_a}$, we identify it with a neighbourhood $\mathbb{D}^k_y \times {\widetilde{C}_x(\widehat{Z_z})}$. We call the set
\begin{equation}
    \partial \widehat{U_a}:=(\partial \mathbb{D}^k_y) \times {\widetilde{C}_x(\widehat{Z_z})} \cup \mathbb{D}^k_y \times \{x=1\} \times \widehat{Z}
\end{equation}
the \textit{\textbf{boundary of the fundamental neighbourhood}}
and we denote the pre-image of $\partial \widehat{U_a}$ under the blow-down map $\beta$ by $\partial {U_a}$, which we refer to as the \textbf{\textit{metric boundary}} of $U_a$. We refer to the set where $\rho_X|_{U_a}=0$ as the \textbf{\textit{singularities (singular set) of $U_a$}}.
\end{definition}

We observe that the metric boundary of such a fundamental neighbourhood has an open dense set that is smooth.
By the equivalence of Thom-Mather stratified spaces and manifolds with corners with iterated fibration structures, there exists a lift of $\widehat{\phi}$ to a map $\phi: U_a \rightarrow \phi(U_a)$, where $U_a$ is a manifold with corners with iterated fibration structures, such that $\widehat{\phi} \circ \beta=\beta \circ \phi$ where $\beta$ is the blow down map, and where $\phi$ is a diffeomorphism of manifolds with corners. \textbf{We will refer to $U_a$ as a fundamental neighbourhood of $a$ as well}, denoting the difference as needed by our notation. 
We refer to $\widehat{\phi}$ as a \textit{\textbf{diffeomorphism of the fundamental neighbourhood $\widehat{U_p}$}}. 

If $a$ is a singular point of $\widehat{X}$ contained on the stratum $Y$, and $\widehat{Z}$ is the link of $\widehat{X}$ at $a$, then the tangent cone of $\widehat{X}$ at $a$ is $T_aY \times \widehat{Z}^{+}$. Given a wedge metric on $\widehat{X}$, there is a canonical metric on the tangent cone of a point obtained by freezing coefficients and extending homogeneously.

We will denote the \textbf{\textit{resolved truncated tangent cone}} at $a$ by $\mathfrak{T}_aX$, the metric product of the unit ball in $T_aY$ and the resolved truncated cone $C_{[0,1]}(Z)=[0,1]\times Z$ which is topologically a manifold with boundary, but is metrically singular. The wedge metric obtained by freezing coefficients at $\beta^{-1}(a)$ is the metric induced on $\mathfrak{T}_aX$ which is homogeneous and is of product type.

Given a neighbourhood $\widehat{U_a} = \mathbb{D}^{k} \times {\widetilde{C}(\widehat{L})}$, where we have denoted the (singular) metric cone by ${\widetilde{C}(\widehat{L})}$ we denote the resolved space $U_a=[0,1]\times Z$ where $Z$ is the resolution of $\widehat{Z}=\mathbb{S}^{k-1} \star \widehat{L}$ which is the join of the links $\mathbb{S}^{k-1}$ of the cone $\mathbb{D}^k$ and the link $\widehat{L}$.
Indeed given $\widetilde{C}_{r_1}(\mathbb{S}^{k-1})\times \widetilde{C}_{r_2}(\widehat{L})$ where the factor $\widetilde{C}_{r_1}(\mathbb{S}^{k-1})=\mathbb{D}^k$ is smooth, the new radial variable on $\widetilde{C}(\widehat{Z})$ is $x=\sqrt{(r_1)^2+(r_2)^2}$. 

We refer to \cite[\S 2.1]{Jesus2018Wittensgeneral} for a discussion of the product of cone metrics being a cone metric (c.f., \cite[\S 3.2.1]{Jesus2018Wittens}).
%We refer to \cite[\S 3.1]{EdwardsdeBorbon2021productsofcones} for a discussion of the product of cone metrics being a cone metric (c.f., \cite[\S 2.1]{LegendreToricSasaki2022}).
where a refinement of the original stratification of $\widehat{U_a}$ and a refinement of the corresponding manifold with fibered boundary structure to the resolved space $U_a$ is presented for products of cones, which we present below.

\begin{definition}[Refined stratification]
\label{Definition_refined_stratification}
Given a fundamental neighbourhood $\widehat{U_a}=\mathbb{D}^k \times \widetilde{C}_{r_2}(\widehat{L})$ of a point $a$, we consider the stratification of the neighbourhood given by $\widetilde{C}_x(\mathbb{S}^{k-1} \star \widehat{L})=\widetilde{C}(\widehat{Z})$. 
Consider the case where $L$ is smooth. Then $U_a$ has a smooth stratum $\mathbb{D}^k \times (0,1) \times L$ and a singular stratum $\mathbb{D}^k \times L$.
We define the \textit{refined stratification} on $U_a$ as the one which has smooth part $(0,1)_{r_1} \times \mathbb{S}^k \times (0,1)_{r_2} \times L$. It has two strata of depth 1 given by $\{r_1=0\} \times (0,1)_{r_2} \times L$ and $(0,1)_{r_1} \times \mathbb{S}^k \times \{r_2=0\}$, and a stratum of depth 2 given by $\{r_1=0=r_2\}$.
It is easy to see how this definition can be extended inductively when $L$ is stratified.
\end{definition}

A simple example is when $U_a=\mathbb{D}^{k}$ is a smooth disc, whose refined stratification corresponds to viewing it as a truncated cone over the sphere, the cone point is a depth 1 stratum.

\begin{remark}[Convention for stratification on fundamental neighbourhoods]
    In the rest of this article, unless otherwise stated, whenever we take fundamental neighbourhoods $\widehat{U_a}=\widetilde{C}(\widehat{Z})$ with resolution $U_a=C(Z)$ of a point $a$ we will consider the refined stratification, which has an open dense stratum of depth $0$ given by the set
    \begin{equation}
    \label{equation_set_name_what}
        \{(x,z) \in [0,1)_x \times Z \hspace{2mm} | \hspace{2mm} x \in (0,1),  z \in Z^{reg} \}.
    \end{equation}
\end{remark}

The main reason for this convention is to simplify the notation and proofs. The main estimates that we prove for Dirac operators are proven on the dense set in \eqref{equation_set_name_what}. When defining certain domains for the Dolbeault complex globally on a space $X$, we will use the boundary defining functions corresponding to the global stratification. We will use the boundary defining functions of the \textit{original} stratification when defining local domains for sections supported on fundamental neighbourhoods in Subsection \ref{subsubsubsection_Neumann_boundary_condition}.

\begin{remark}
\label{Remark_join_metric_error_asymptotics}
    Given a fundamental neighbourhood ${U_a}=C_{r_1}(\mathbb{S}^{k-1})\times C_{r_2}(L)$ of $a$, where the factor $C_{r_1}(\mathbb{S}^{k-1})=\mathbb{D}^k$ is smooth as in the discussion above, the new radial variable on $C(Z)$ is $x=\sqrt{(r_1)^2+(r_2)^2}$. We observe that given functions $f$ which vanishes to order $\mathcal{O}(r_1^m)$ and $\mathcal{O}(r_2^{\delta})$ at $a$ for some $\delta>0$, it vanishes to order $\mathcal{O}(x^{\delta'})$ at $a$ for some $0 < \delta' = \min \{m , \delta \}$.    
\end{remark}

\subsection{Wedge K\"ahler Hamiltonians and circle actions}
\label{subsection_wegde_Hamiltonian_actions}

We now introduce the group actions in the wedge K\"ahler setting that we study, building on subsection \ref{subsection_asymptotics_metric}. It is known that the symplectic reduction of a smooth compact symplectic manifold will always yield a stratified pseudomanifold (see Theorem 2.1 of \cite{sjamaar1991stratified}), and so does K\"ahler reduction, and we study actions in a generality which includes actions coming from reductions on smooth spaces.
It is well known that K\"ahler circle actions with fixed points on smooth manifolds are Hamiltonian by Frankel's theorem, and this extends to the singular case that we study (see, e.g., \cite[\S 3.2]{Mazzeo_2015}). Therefore, if there is a K\"ahler circle action which has fixed points, it is a Hamiltonian action. 

\subsubsection{Morse functions and local normal forms.}

Let us review the normal form for K\"ahler Hamiltonian vector fields in neighbourhoods of smooth fixed points.
An $S^1$ action on a smooth K\"ahler manifold $X^{2n}$ induces an $S^1$ action on the tangent space $\mathbb{C}^n$ of an isolated simple fixed point. Since the action preserves the K\"ahler metric, it preserves the radial sets, and an $S^1$ action is induced on the sphere $S^{2n-1} \subseteq \mathbb{C}^n$ at any fixed radial distance from the fixed point. On the tangent space, this is generated infinitesimally by a holomorphic vector field which is of the form
\begin{equation}
    V_0=\sum_{j=1}^n \sqrt{-1} \gamma_j (z_j\partial_{z_j} - \overline{z}\partial_{\overline{z}_j})=\sum_{j=1}^n  \gamma_j\partial_{\theta_j}
\end{equation}
for local holomorphic coordinates $z_j$ on the tangent space where the $\gamma_j$ are all non-zero. Here the $\theta_j$ correspond to the angular coordinates in polar coordinates for the polydiscs where $z_j=r_je^{i\theta_j}$ is the standard representation of holomorphic functions in polar coordinates.
If the K\"ahler form on the tangent space is 
\begin{equation}
    \omega=\frac{\sqrt{-1}}{2} \sum_{j=1}^n dz_j \wedge \overline{dz_j}
\end{equation}
then the Hamitonian condition, $dh(\cdot)=-\omega (V_1, \cdot)$ yields that $h=h(0)+\sum_j \gamma_j |z_j|^2$. The $\gamma_j$'s are known as the \textit{\textbf{weights}} of the K\"ahler action on the tangent space of the fixed point.

Let us study the model case of a cone ${\widetilde{C}_r(\widehat{Z})}$ with a product type K\"ahler wedge metric, with K\"ahler wedge form $2dr \wedge r\alpha + r^2 d \alpha$ where $\alpha$ is a contact form for the Reeb vector field $W$ on the Sasaki link $Z$, with the K\"ahler Hamiltonian $h=r^2$. 
We observe that any vector field $W$ on the link $Z$ extends to the cone over $Z$ as $r (\frac{1}{r} W)$, that is a wedge vector field that vanishes to first order in the boundary defining function of the resolved tangent cone.
The circle action generated by the Reeb vector field $r \frac{1}{r}\widetilde{V}$ where $\widetilde{V}=\alpha^{\#}$ for a contact form $\alpha$ extends to $r \frac{1}{r}\widetilde{V}$ on the resolved tangent cone, and has an isolated fixed point at $r=0$ on the singular tangent cone, whence we call the fixed point \textit{\textbf{simple}} as in Definition 5.18 of \cite{jayasinghe2023l2}.

The following is the notion of stratified Morse function for which the de Rham Morse inequalities were proven in \cite{jayasinghe2023l2}, which we give for comparison.

\begin{definition}[stratified Morse function]
\label{definition_stratified_Morse_function}
Let $\widehat{X}$ be a stratified pseudomanifold with a wedge metric $g_w$ and a continuous function $f' \in C^0(\widehat{X})$ which lifts to a map $f \in C_{\Phi}^{\infty}(X)$ (see \eqref{equation_smooth_functions_on_stratified_spaces}) on the resolved manifold with corners, i.e., $f=f' \circ \beta$ and which is a Morse function when restricted to $\widehat{X}^{reg}$. We demand that 
the image of the set $|df|_{g_w}^{-1}(0)$ under the blow-down map $\beta$ consists of isolated points on the stratified pseudomanifold.

We call such points the \textbf{\textit{critical points of $f$}}. Moreover, at critical points $a$, we ask that there exist fundamental neighbourhoods $\widehat{U_a}=\widehat{U_{a,s}}\times \widehat{U_{a,u}}$ where the metric respects the product decomposition, with radial coordinates $r_s$ on $\widehat{U_{a,s}}$ and $r_u$ on $\widehat{U_{a,u}}$ such that restricted to $\widehat{U_{a,s}}$ (respectively $\widehat{U_{a,u}}$), $r_s$ (respectively $r_u$) is the geodesic distance to $a$, and such that the function $f'$ restricted to $\widehat{U_a}$ can be written as $r_s^2-r_u^2$. 
Then we say that $f'$ is a \textit{\textbf{stratified Morse function}}.
\end{definition}

We modify this as follows to get the functions we study in this article.

\begin{definition}[K\"ahler Hamiltonian Morse function]
\label{Definition_Kahler_Morse_function}

Let $\widehat{X}$ be a stratified pseudomanifold with a wedge K\"ahler structure $(g,J,\omega)$.
Let $h' \in C^0(\widehat{X})$ be a function which lifts to a function $h \in C_{\Phi}^{\infty}(X)$ (see \eqref{equation_smooth_functions_on_stratified_spaces}) on the resolved manifold with corners, i.e., $h=h' \circ \beta$ and which is a Morse function when restricted to $\widehat{X}^{reg}$. We demand that 
the image of the set $|dh|_{g}^{-1}(0)$ under the blow-down map $\beta$ consists of isolated points on the stratified pseudomanifold.
We call such points the \textbf{\textit{critical points of h}}.
We demand that there exists a wedge vector field $V$ satisfying $dh=-\iota_{V}\omega$ called the \textit{\textbf{Hamiltonian vector field}}, the flow of which generates a stratum preserving $S^1$ action on $\widehat{X}$ which lifts to one on $X$ preserving the wedge K\"ahler structure, which implies that $dh$ is a smooth wedge one form.

We assume that there is a decomposition of the resolved truncated tangent cone $\mathfrak{T}_aX$ given by
\begin{equation}
\label{equation_decomposition_tangent_cone_1}
    \mathfrak{T}_aX= \Pi_{j=1}^l C_{r_j}(Z_j)
\end{equation}
where $r_j$ is the distance to a given point from the cone point on each cone $C(Z_j)$. The metric induced on the tangent cone of $a$ can be obtained by freezing coefficients of the metric at $\beta^{-1}(a)$, and thus it is a well defined wedge metric on $U_a$, and we have the functions $r_j$ (with abuse of notation) on $U_a$.

Then we demand that the Hamiltonian $h$ has an expansion near each critical point $a$ restricted to a fundamental neighbourhood $U_a$ of the form $h=h_a + \mathcal{O}((r_1^2+r_2^2+...+r_l^2)^{c/2})$ where 
$c>2$ and $h_a=\sum_j \gamma_j r_j^2+C$ where $C$ is some real constant. We call $h_a$ \textit{\textbf{Hamiltonian on the tangent cone}}.

We will refer to the $\gamma_j$ as the weights of the action at the singularities.
We call the cone $U_{a,s}:=\Pi_{j: \gamma_j>0}C_{r_j}(Z_j)$ as the \textit{\textbf{stable factor/attracting factor}} of the tangent cone, and the cone $U_{a,u}:=\Pi_{j: \gamma_j<0}C_{r_j}(Z_j)$ as the \textit{\textbf{unstable factor/ expanding factor}} of the tangent cone.

We define the \textbf{\textit{Hamiltonian vector field on the tangent cone}} to be $\widetilde{V}=:\sum_j \gamma_j r_j (\frac{1}{r_j}V_j)$
where the $V_j$ are the Reeb vector fields on each link $Z_j$ with respect to the Sasaki structures corresponding to the wedge metric on $U_a$ obtained by freezing coefficients at $\beta^{-1}(a)$.
\end{definition}

We use the name Hamiltonian on the tangent cone since the function $h_a$ is indeed a well defined K\"ahler Hamiltonian Morse function on the tangent cone with the induced metric and K\"ahler form.

\begin{definition}
\label{definition_stable_unstable_radii}
In the setting of the definition above, we define the \textbf{\textit{stable radius}} at $a$ by $r^2_s:=\sum_{j: \gamma_j>0} \gamma_j r_j^2$, and the \textbf{\textit{unstable radius}} at $a$ by $r^2_u:=\sum_{j: \gamma_j<0} -\gamma_j r_j^2$.
Then we define the \textbf{\textit{normalized radius}} at $a$ by $r^2_n:=r^2_s+r^2_u$. Then $h_a=r_s^2-r_u^2$.
\end{definition}

\begin{remark}[Local extension to $\mathbb{C}^*$ action]
\label{remark_local_extention_c_star}
Given a self map $f_{\theta}(r_j,z_j)$ on the tangent cone generated by the flow at time $\theta$ of the Hamiltonian vector field, the map $f_{\theta}(s r_j,z_j)$ for $s \in \mathbb{R}$ gives an extension of the $S^1$ action for a $\mathbb{C}^*$ action, where $\lambda=se^{i\theta} \in \mathbb{C}^*$ acts on the tangent cone.    
\end{remark}

The asymptotics for the wedge K\"ahler structures discussed in Subsection \ref{subsection_asymptotics_metric} can be used to see that the Hamiltonian vector field $V$ is equal to the Hamiltonian vector field on the tangent cone $\widetilde{V}$ on a fundamental neighbourhood of the fixed point up to a wedge vector field vanishing at order $r_n^{1+\delta}$ for \textit{some} $\delta>0$ restricted to a fundamental neighbourhood, since $V=-J \nabla h$. Moreover the stratum preserving condition implies that $\iota_{d\rho_X^{\#}}dh|_{\rho_X}=0$.

\begin{remark}[Sign convention for Hamiltonian actions]
\label{Remark_sign_convention_Hamiltonians}
Here we follow the sign convention in \cite{witten1984holomorphic} (different from that in \cite{wu1998equivariant}) and take the Hamiltonian vector field to be $V=-J \nabla h$ (where we denote the gradient of $h$ by $\nabla h$), equivalently $dh=-\iota_V \omega$.

The difference in this sign convention is the source of the difference between the local deformed cohomology groups on page 326 of \cite{witten1984holomorphic}, and those in Proposition 3.2 of \cite{wu1998equivariant}.
\end{remark}

\begin{remark}[Nomenclature]
    Given a K\"ahler Hamiltonian Morse function, it is easy to see that the metric gradient flow of $h$ (as opposed to the symplectic gradient flow which gives the Hamiltonian flow) is attracting on cones where the weights $\gamma_j$ are positive and expanding on cones where the weights are negative. This is the reason we call the factors defined above attracting and expanding.
    
    The nomenclature is motivated from that used in \cite{jayasinghe2023l2} for fixed point sets, as well as the notions of stable and unstable manifolds in dynamical systems.
 
\end{remark}

It is clear that in the smooth setting, we can take the links to be circles and recover the normal form described in the smooth setting. A circle action corresponds to a family of self maps $f_{\theta}: X \rightarrow X$ where $\theta \in S^1$, that are generated by the positive time $\theta$ flow of the Hamiltonian vector field $V$. Then $f_{\theta}^{-1}=f_{-\theta}$  given by the reverse time flow of the vector field will be equal to the positive time flow of $-V$. The weights at the fixed points will have the same magnitudes but opposite signs for these two flows. 
The weights have a interpretation as the frequencies of a Hamiltonian system in the study of harmonic oscillators related to physical problems.
The following example demonstrates this in the smooth setting, which is the same one in Example \ref{Example_spinning_sphere_intro}.

\begin{example}[Rotation on the round 2 sphere]
\label{example_pitfall}
Consider a rotation on the round sphere about the axis joining the north and south poles. This is a K\"ahler action with respect to the standard K\"ahler structure of $\mathbb{CP}^1$. As in Definition \ref{Definition_Kahler_Morse_function}, we have a decomposition of the tangent space into attracting and expanding factors on a fundamental neighbourhood $U=U_s\times U_u$ at each fixed point. While one fixed point will see a clockwise rotation, the other will see an anticlockwise rotation so the $\gamma_j$ will have opposite signs at the two fixed points.
We can see this explicitly using standard spherical coordinates on the sphere. The volume form is $\omega=\sin (\phi) d\theta d\phi$ in spherical coordinates and the height function $h=\cos(\phi)$ is a Hamiltonian for the vector field $\partial_{\theta}$ generating the rotation. We can put complex coordinates $z$ at the north pole at $\phi=0$ and y=$1/z$ at the south pole. On the north pole the $\gamma$ factor is positive since the vector field $\partial_\theta=\beta^*(+i[z\partial_z-\overline{z}\partial_{\overline{z}}])$ at the tangent space of the fixed point. At the south pole we have $\partial_\theta=\beta^*(-i[y\partial_y-\overline{y}\partial_{\overline{y}}])$ and the weight is negative. If instead of the symplectic gradient of $h$, one takes the gradient with respect to the round metric, the north pole is strictly expanding while the south pole is strictly attracting. 
\end{example}

As discussed in Remark \eqref{remark_density_of_circle_actions} while we work out all the details for circle actions, the results extend to more general actions of compact connected Lie groups $G$ by well known considerations. Thus we end this section with a definition of a K\"ahler moment map in our setting.

\begin{definition}[K\"ahler moment map]
In the same setting as in Definition \ref{Definition_Kahler_Morse_function}, but for $G$ a general compact connected Lie group, we define a (stratified) \textit{\textbf{ K\"ahler moment map}} as a moment map $M$ in the usual sense on $X^{reg}$ which extends to a continuous function on $X$ which we denote
\begin{equation}
    M: X \rightarrow \mathfrak{g}
\end{equation}
where $\mathfrak{g}$ is the Lie algebra of $G$, where we demand that if $g \in G$ generates circle actions on $X^{reg}$ corresponding to a K\"ahler Hamiltonian Morse function on $X^{reg}$, they extend to K\"ahler Hamiltonian Morse function as in Definition \ref{Definition_Kahler_Morse_function}.
Then we denote a geometric endomorphism corresponding to an element $g \in S^1 < G$ as $T_g$.
\end{definition}
If the Dolbeault complex is twisted by coefficients of a holomorphic vector bundle $E$ with a Hermitian metric, we will consider group actions which lift to an action on the bundle, preserving the Hermitian metric.
\textit{\textbf{Unless otherwise stated, we will always assume that our spaces are connected, and that the Lie groups $G$ that we consider are connected.}}

\subsubsection{Infinitesimal actions and Lie derivatives}
\label{subsubsection_infinitesimal_actions_lie}

In this subsection we introduce the infinitesimal actions and Lie derivatives on sections of Hermitian bundles corresponding to K\"ahler actions of the type we study. Consider a resolution of a stratified pseudomanifold $X$ with a Hermitian bundle $E$, equipped with a compatible connection. Let $V$ be the Hamiltonian vector field corresponding to a 
K\"ahler Hamiltonian Morse function, where the action on $X$ lifts to a fiberwise linear action on $E$.
Given actions that induce self maps $f_{\theta}$, we prove our results for twisted complexes $E$ only when the action lifts to one on $E$ giving fiberwise linear isomorphisms. 
In our singular setting, as discussed earlier, we demand that the flow of the vector field preserves the strata 
and when we study Hermitian bundles $E$ on $X$ with a compatible connection $\nabla^E$ we assume that the circle action lifts to a Hermitian bundle map. We can define the Lie derivative of the vector field $V$ using its flow.

We follow the notation in \cite{wu1998equivariant} and denote the Lie derivative on $E$ by $\overline{L_V}:=\{\iota_V, \nabla^E\}$. Given a K\"ahler structure $(g,J,\omega)$ the condition that the flow of $V$ generates K\"ahler isometries is equivalent to $\overline{L_V} g,\overline{L_V} J,\overline{L_V} \omega$ all vanish as sections of the bundles on which $g,J,\omega$ are defined as sections.
Given the lifted action on $E$, we have a natural action on the sections of $E$ which sends a section $s$ to $g \circ s \circ g^{-1}$ for $g=e^{i\theta} \in S^1$. This action commutes with the operators $\overline{\partial}_E,\overline{\partial}^*_E, cl(V)$ (introduced in Subsection \ref{subsection_spin_c_Dirac}) and we have the infinitesimal generator of this action 
\begin{equation}
    L_Vs=- \lim_{\theta \rightarrow 0}\frac{g \circ s \circ g^{-1}-s}{\theta}
\end{equation}
where we use the notation introduced in  \cite[\S 3]{wu1998equivariant} (this is notated as $\hat{L}_V$ in \cite{mathai1997equivariant}).

Since the difference $\overline{L_V}-L_V$ is linear over $C_{\Phi}^{\infty}(X)$, it is given by a section of $End(E)$ over $X$.
We refer to \cite{mathai1997equivariant} for more details.

\subsection{Locally conformally totally geodesic wedge metrics}
\label{subsection_LCTGMetrics}

In order to motivate the conditions we impose on the metric at fundamental neighbourhoods of critical points, we study the following example refering to sections 7.1.2 and 7.3.5 of \cite{jayasinghe2023l2} for more details.

\begin{example}
Consider the singular algebraic variety $\widehat{V}$ given by $p=ZY^2-X^3=0$ in $\mathbb{CP}^2$ which admits the $\mathbb{C}^*$ action $(\lambda)\cdot [X:Y:Z]=[\lambda^2X:\lambda^3Y:Z]$. We consider the associated family of geometric endomorphisms on the Dolbeault complex for the trivial bundle. The action has one smooth fixed point at $[0:1:0]$ with holomorphic Lefschetz number $1/(1-\lambda^{-1})$. The other fixed point is at the singularity $a=[0:0:1]$.
On the affine chart $Z=1$, we have the variety given by the equation $y^2=x^3$, where $x=X/Z, y=Y/Z$. 
\end{example}

Consider the affine metric on the chart $Z=1$. Away from the singularity the variety $y^2=x^3$ can be parametrized by the normalization map 
\begin{equation}
    t \rightarrow (t^2,t^3)=(x,y)
\end{equation}
where $t=re^{i\alpha}$ and $\alpha \in [0, 2\pi]$.
Let us compactify the regular part of this space with the choice of boundary defining function $\rho=|x|=|t|^2$ for the boundary corresponding to the resolved manifold with boundary, where the pre-image of the boundary under the blowup-map is a ($T^{2,3}$ knotted) circle at $\rho=0$.
We can compute the pullback of the affine metric on $Z=1$ to the resolution of the singular space to see that it is a wedge metric on the resolved manifold with boundary (where the pre-image of the singular point corresponding to the real blow-up is a circle) as follows.

Let us denote $x=t^2=\rho e^{i\theta}$ where $\theta=2\alpha \in [0, 4\pi]$.
The pulled back metric is 
\begin{equation}
    4|t|^2 (dt \otimes d\overline{t}) (1+(9/4)|t|^2)=4|r|^2 (dr^2+r^2 4d\alpha^2)(1+(9/4)|r|^2)
\end{equation} 
which can also be written as 
\begin{equation}
    (dx \otimes d\overline{x})(1+(9/4)|x|)=(d\rho^2 +\rho^2 d\theta^2) (1+(9/4)|\rho|)=(d\rho^2+\rho^2 4d\alpha^2) k_c^2
\end{equation}
where $k_c^2=(1+(9/4)|\rho|)$.
We observe that this wedge metric is conformal to the metric on the tangent cone $(d\rho^2 +\rho^2 d\theta^2)$ obtained by freezing coefficients at the circle at $\rho=0$. 

Since the resolved truncated tangent cone at a fixed point is isometric to a fundamental neighbourhood with the metric obtained by freezing coefficients, we can compare the two spaces $(U_a,g_w)$ and $(U_a,g_a)$ where $g_w$ is the restriction of the metric on $X$ to the fundamental neighbourhood while $g_a$ is the metric obtained by freezing coefficients. Thus we say that the metric on a fundamental neighbourhood is conformal to that on the tangent cone if $g_w= (k^2_c) g_a$ where $k^2_c$ is a smooth function on the regular part of $U_a$, extending to a continuous function on $U_a$.

Thus the local cohomology groups of the Dolbeault-Dirac operator corresponding to the trivial bundle (for a given choice of domain) on the fundamental neighbourhood and the tangent cone can be identified. In the example of the cusp curve above, it is easy to see that the local cohomology groups for an attracting critical point at the singularity are for both the fundamental neighbourhood and the tangent cone are the $L^2$ bounded functions that admit Laurent series expansions in powers of $t$.
We refer to \cite[\S 7.1.2]{jayasinghe2023l2} for an extensive discussion on the local cohomology groups for choices of domains for the example above.

\begin{remark} 
We refer to Remark \ref{Remark_convention_product_type_metrics_only_for_local_complexes} on how to define local cohomology groups for fundamental neighbourhoods of critical points when the metric is not product type. There we observe that it is straightforward for fundamental neighbourhoods where the weights are all either positive or negative (i.e. either purely attracting or expanding). If there are weights of both types, when the metric is of product type we can identify the local cohomology with the cohomology if it were purely attracting, up to duality in the expanding factor. Thus as long as we can identify the local cohomology groups for an attracting factor for conformally related metrics where one of the metrics is product type, we can use duality to define local cohomology groups for the other metric when it is not product type.
\end{remark}

\begin{remark}
\label{Remark_special_conformal_functions}
    The action is generated on the tangent cone by $\widetilde{V}=\rho \frac{1}{\rho}\partial_{\theta}$, which for the global K\"ahler actions of the type that we study extends by assumption to a well defined vector field on the fundamental neighbourhood $U_a$. In the example above, on the fundamental neighbourhood it is generated by $V=\rho k_c \frac{1}{\rho k_c}\partial_{\theta}$, which shows that $V-\widetilde{V}=0$. This is because the conformal factor $k_c^2$ is a function of the radial variable $\rho$, which implies that $L_V k_c^2=0$ and $L_{\widetilde{V}} k_c^2=0$ and thus the Hamiltonian action generated by the weighted linear combination of Reeb vector fields on the tangent cone can be identified with that on a fundamental neighbourhood. In particular it is easy to see that the action commutes with the spin$^{\mathbb{C}}$ Dirac operators for both metrics.
    
    As discussed in the previous subsection, for general conformal factors we can only anticipate that the difference will be given by a wedge vector field $W$ that vanishes to order $\mathcal{O}(\rho^{\delta})$ where $\delta=1/2$ for the metric on the cusp curve above.
\end{remark}

\begin{definition}[Locally conformally product-type wedge metrics]
\label{definition_locally_conformally_pt_wedge}

Given a fundamental neighbourhood $(U_a,g_w)$ of an isolated critical point of a K\"ahler Hamiltonian Morse function equipped with a wedge metric $g_w$, we say that it is \textit{\textbf{locally conformally wedge product type}} if the metric $g_w$ is conformal to that on the tangent cone $(U_a,g_a)$ at the critical point (obtained by freezing coefficients as discussed above) by some conformal factor $k_c^2$ which is continuous on $X$ and smooth on $X^{reg}$, and is equal to one at the pre-image of the critical point $a \in \widehat{U_a}$ under the blow-down map, on $U_a$.

If in addition the function $k_c^2$ can be expressed as a function of the radial distance functions $r_j$ on the tangent cone (where we identify the fundamental neighbourhood with the resolved tangent cone as resolved stratified spaces), we say that the metric $g_w$ is \textit{\textbf{locally radially conformally wedge product type}}, or simply say that the \textit{\textbf{conformal factor is radial}}.
\end{definition}

For a given K\"ahler Hamiltonian Morse function on $X$, if the asymptotically $\delta$ wedge metric restricted to a small enough fundamental neighbourhood of $U_a$ of a critical point $a$ is conformal to the product type metric on the tangent cone at $a$ (obtained by freezing coefficients on the resolved link at the critical point), up to terms that are asymptotically $\delta_1$, we say that the metric is \textit{\textbf{locally conformally asymptotically $\delta_1$ wedge}} at $a$. Following the nomenclature in Subsection \ref{subsection_asymptotics_metric} if $\delta_1=1$ ($\delta_1=1/2$), we say that the metric is \textit{\textbf{locally conformally totally geodesic (exact) wedge}} at $a$.

We can identify the local cohomology groups of operators for twisted Dolbeault complexes on fundamental neighbourhoods with those on the tangent cone when the metric on the fundamental neighbourhood is conformal to that on the tangent cone. For instance, twisting by a square root bundle of the canonical bundle, one has the spin-Dirac complex which we study in subsection \ref{subsubsection_spin_Morse_inequalities}. In the smooth setting such identifications have been studied even without the K\"ahler structure in \cite[S 4]{conformalspinHijazi_1986}. Such identifications can be easily extended on twisted bundles restricted to the fundamental neighbourhood, for the wedge metric and the product type metric obtained by freezing coefficients.

\section{Operators and Dolbeault complexes}
\label{Section_Hilbert_complexes}

In this section we discuss Dolbeault complexes on stratified pseudomanifolds, their restrictions to certain subspaces of stratified pseudomanifolds, and equivariant sub-complexes of Dolbeault complexes.
First we review some facts on abstract Hilbert complexes, then set up domains for Dolbeault complexes associated to resolutions of stratified pseudomanifolds, both at the global and local levels, as well as their equivariant subcomplexes. 
We then discuss geometric endomorphisms 
on these complexes corresponding to group actions, then introduce various polynomial supertraces and duality results, building on the work in \cite{jayasinghe2023l2}.

%+++++++++++++++++++++++++++++++++++++++++++++++++++++++++++++++++++++++++++++%
\subsection{Abstract Hilbert complexes}
\label{subsection_abstract_hilbert_complexes}
We define Hilbert complexes following \cite{bru1992hilbert}. 

\begin{definition}
A \textbf{\textit{Hilbert complex}} is a complex, $\mathcal{P}=(H_*,\mathcal{D}(P_*),P_*)$, of the form:
\begin{equation}
    0 \rightarrow \mathcal{D}(P_0) \xrightarrow{P_0} \mathcal{D}(P_1) \xrightarrow{P_1} \mathcal{D}(P_2) \xrightarrow{P_2} ... \xrightarrow{P_{n-1}} \mathcal{D}(P_n) \rightarrow 0.
\end{equation}
%Here each $H_k$ is a separable Hilbert space and
where each map $P_k$ is a closed %first order 
operator which is called the differential, such that:
\begin{itemize}
    \item the domain of $P_k$, $\mathcal{D}(P_k)$, is dense in $H_k$ which is a separable Hilbert space,
    \item the range of $P_k$ satisfies $ran(P_k) \subset \mathcal{D}(P_{k+1})$,
    \item $P_{k+1} \circ P_k = 0$ for all $k$.
\end{itemize}
\end{definition}
%++++++++++++++++++++++++++++++++++++++++++++++++++

We will often denote such a complex by $\mathcal{P}=(H,\mathcal{D}(P),P)$ without explicitly denoting the grading, or by $\mathcal{P}=(H,P)$ where the domain is either clear by context, or denoted in some decorated notation for $P$.
We shall sometimes notate the complex as $\mathcal{P}(X)$ when the Hilbert spaces are sections of a vector bundle on the resolution of a stratified pseudomanifold $X$, and we say that the \textit{\textbf{Hilbert complex $\mathcal{P}(X)$ is associated to the space $X$}}.
The \textit{\textbf{cohomology groups}} of a Hilbert complex are defined to be $\mathcal{H}^k(\mathcal{P}):= ker(P_k)/ran(P_{k-1})$. We shall often use the notation $\mathcal{H}^k$, where the complex used is clear from the context and $\mathcal{H}^k(\mathcal{P}(X))$ when the space needs to be specified (including spaces with boundary when they come up later on). If these groups are finite dimensional in each degree, we say that it is a \textit{\textbf{Fredholm complex}}.

For every Hilbert complex $\mathcal{P}$ there is an \textit{\textbf{adjoint Hilbert complex}} $\mathcal{P}^*$, given by
\begin{equation}
\label{adjoint_complex}
    0 \rightarrow \mathcal{D}((P_{n-1})^*) \xrightarrow{(P_{n-1})^*} \mathcal{D}((P_{n-2})^*) \xrightarrow{(P_{n-2})^*} \mathcal{D}((P_{n-3})^*) \xrightarrow{(P_{n-3})^*} ... \xrightarrow{(P_{1})^*} H_0 \rightarrow 0
\end{equation}
where the differentials are $P_k^*: Dom(P^*_k) \subset H_{k+1} \rightarrow H_k$, the Hilbert space adjoints of the differentials of $\mathcal{P}$. That is, the Hilbert space in degree $k$ of the adjoint complex $\mathcal{P}^*$ is the Hilbert space in degree $n-k$ of the complex $\mathcal{P}$, and the operator in degree $k$ of $\mathcal{P}^*$ is the adjoint of the operator in degree $(n-1-k)$ of $\mathcal{P}$. The corresponding cohomology groups of $\mathcal{P}^*(H,P^*)$ are $\mathcal{H}^k(H, (P)^*) := ker(P^*_{n-k-1})/ran(P^*_{n-k})$.
For instance, in the case of the de Rham complex $(L^2\Omega^k(X),d_{\max})$, the adjoint complex $\mathcal{P}^*$ is the complex $\mathcal{Q}=(L^2\Omega^{n-k}(X),\delta_{\min})$, since the operators $d$ and $\delta$ are formal adjoints of each other.

The main complexes we focus on this section are twisted Dolbeault complexes where the Hilbert spaces are $H_q=L^2\Omega^{0,q}(X;E)$ with operators $P=\overline{\partial}_E$ with a choice of domain $\mathcal{D}(P)$, which we study later in this section. We can form a two step complex where the Hilbert spaces are $H^+ =\bigoplus_{q=even} H_q$, and $H^- = \bigoplus_{q=odd} H_q$.
This leads to a \textit{\textbf{wedge Dirac complex}} as introduced in Definition 3.3 \cite{jayasinghe2023l2}, 
\begin{equation}
    0 \rightarrow \mathcal{D}(D^{+}) \xrightarrow{D^+} \mathcal{D}(D^{-}) \rightarrow 0
\end{equation}
where $D^{\pm}$ is the spin$^\mathbb{C}$-Dirac operator restricted to the spaces, together with the domain for the operator $D= \sqrt{2} (P+P^*)$ given by
\begin{equation}
    \label{Domain_Dirac_first}
    \mathcal{D}(D)=\mathcal{D}(P) \cap \mathcal{D}(P^*).
\end{equation}
There is an associated \textbf{\textit{Laplace-type operator}} $\Delta_k = P_{k}^*P_k+P_{k-1}P_{k-1}^*$ in each degree, which is a self adjoint operator with domain
\begin{equation}
\label{Laplacian_P_type}
\mathcal{D}(\Delta_k) = \{ v \in \mathcal{D}(P_k) \cap \mathcal{D}(P_{k-1}^*) : P_k v \in \mathcal{D}(P_k^*), P^*_{k-1} v \in \mathcal{D}(P_{k-1}) \},
\end{equation}
and with nullspace
\begin{equation}
   \widehat{\mathcal{H}}^k(\mathcal{P}):= ker(\Delta_k) = ker(P_k) \cap ker(P_{k-1}^*).
\end{equation}
The Kodaira decomposition which we present below in Proposition \ref{Kodaira_decomposition} identifies this with the cohomology of the complex $\mathcal{H}^k(\mathcal{P})$.
We observe that this Laplace-type operator can be written as the square of the associated Dirac-type operator $D=(P+P^*)$, restricted to each degree to obtain $\Delta_k$, and that the domain can be written equivalently as
\begin{equation}
\label{Laplacian_D_type}
\mathcal{D}(\Delta_k) = \{ v \in \mathcal{D}(D) : D v \in \mathcal{D}(D) \}.
\end{equation}
The null space is isomorphic to the cohomology for Fredholm complexes.

%Hodge decomposition.
\begin{proposition}
\label{Kodaira_decomposition}
For any Hilbert complex $\mathcal{P}=(H, P)$ we have the \textbf{\textit{weak Kodaira decomposition}}
\begin{center}
    $H_k=\widehat{\mathcal{H}}^k(H, P) \bigoplus \overline{ran(P_{k-1})} \bigoplus \overline{ran(P_k^*)}$
\end{center}
\end{proposition}
This is Lemma 2.1 of \cite{bru1992hilbert}. 

% Proposition 2 of Francesco
\begin{proposition}
\label{Fredholm_is_closed_range}
If the cohomology of a Hilbert complex $\mathcal{P}=(H_*, P_*)$ is finite dimensional then, for all $k$, $ran(P_{k-1})$ is closed and therefore $\mathcal{H}^k(\mathcal{P}) \cong \widehat{\mathcal{H}}^k(\mathcal{P})$. 
\end{proposition}
This is corollary 2.5 of \cite{bru1992hilbert}. The next result justifies the use of the term \textit{Fredholm complex}.
%++++++++++++++++++++++++++++++++++++++++++++++++++
% Proposition 4 of Francesco
\begin{proposition}
A Hilbert complex $(H_k, P_k)$, $k=0,...,n$ is a Fredholm complex if and only if, for each $k$, the Laplace-type operator $\Delta_k$ with the domain defined in \eqref{Laplacian_P_type} is a Fredholm operator.
\end{proposition}
This is Lemma 1 on page 203 of \cite{schulze1986elliptic}. 
Due to these results, we can identify the space of \textit{harmonic elements}, or the elements of the Hilbert space which are in the null space of the Laplace-type operator, with the cohomology of the complex in the corresponding degree. We shall use the same terminology for non-Fredholm complexes which we study as well. 

For Fredholm complexes, the null space of the Laplacian is isomorphic to the cohomology of the complex since the operator has closed range.

% Proposition 3 of Francesco
\begin{proposition}
\label{Kernel_equals_cohomology}
A Hilbert complex $\mathcal{P}=(H, P)$, is a Fredholm complex if and only if its adjoint complex, $(\mathcal{P}^*)$ is Fredholm.
If it is Fredholm, then 
\begin{equation}
    \mathcal{H}^k(\mathcal{P}) \cong \mathcal{H}^{n-k}(\mathcal{P}^*).
\end{equation}
\end{proposition}

In particular, for operators with closed range, the reduced cohomology groups are the same as the cohomology groups and are isomorphic to the null space of the Laplace-type operator, in which case the decomposition in Proposition \ref{Kodaira_decomposition} is called the \textbf{\textit{(strong) Kodaira decomposition}}.

\begin{remark}
\label{remark_only_normal_pseudomanifolds}
    Since the set $\widehat{X}_{n-2}$ has measure $0$ with respect to a conic metric, the $L^2$ functions on $X^{reg}$ are the same for the space and its normalization and the Hilbert complexes on $\widehat{X}$ and its normalization can be canonically identified. \textbf{From now on we will study topologically normal pseudomanifolds unless otherwise specified}. 
    In Example 7.36 of \cite{jayasinghe2023l2} we went over how the holomorphic Lefschetz fixed point theorem can be computed in the case of a non-normal pseudomanifold as well as its normalization in detail, and it is easy to work out similar correspondences for the holomorphic Morse inequalities for normal and non-normal pseudomanifolds.
\end{remark}

\subsection{Hilbert complexes on stratified pseudomanifolds}
\label{subsection_Hilbert_complexes_stratified_pseudomanifolds}

The twisted spin$^{\mathbb{C}}$ Dirac operators corresponding to twisted Dolbeault complexes are not necessarily essentially self adjoint on singular spaces.
There are two canonical domains which are the minimal domain,
\begin{equation}
    \mathcal{D}_{min}(P_X)= \{ u \in L^2(X;F) : \exists (u_n) \subseteq {C}^{\infty}_c(\mathring{X};F) \text{ s.t. }
    u_n \rightarrow u \text{ and } (P_Xu_n) \text{ is } L^2-\text{Cauchy} \},
\end{equation}
where $P_X=\overline{\partial}_F$ acting on $L^2(X;F)$ where $F=\Lambda^{\cdot}(^{w}T^*X^{1,0}) \otimes E$ where $E$ is a Hermitian bundle, and the maximal domain,
\begin{equation*}
    \mathcal{D}_{max}(P_X)= \{ u \in L^2(X;F) : (P_Xu) \in L^2(X;F) \},
\end{equation*}
wherein $P_Xu$ is computed distributionally.
For wedge Dirac type operators, these domains satisfy the inclusions
\begin{equation}
\label{equation_domain_inclusions}
    \rho_XH^1_e(X;F) \subseteq \mathcal{D}_{min}(D_X) \subseteq \mathcal{D}_{max}(D_X) \subseteq H^1_e(X;F)
\end{equation}
where
\begin{equation*}
    H^1_e(X;F) = \{ u \in L^2(X;F) : Vu \in L^2(X;F) \text{ for all } V \in  \mathcal{C}^{\infty}(X;\prescript{e}{}TX) \}
\end{equation*}
is the edge Sobolev space introduced in \cite{Mazzeo_Edge_Elliptic_1}. In \cite{jayasinghe2023l2} we studied the VAPS domain, following \cite{Albin_2017_index}, and we generalize it in this article.
Recall that we use the notation
\begin{center}
    $\rho_X = \prod_{H \in \mathcal{M}_1(X)} \rho_H$
\end{center}
for a total boundary defining function. A \textbf{multiweight} for $X$ is a map
\begin{center}
    $\mathfrak{s} : \mathcal{M}_1(X) \rightarrow \mathbb{R} \cup \{ \infty \} $
\end{center}
and we denote the corresponding product of boundary defining functions by 
\begin{center}
    $\rho_X^{\mathfrak{s}} = \prod_{H \in \mathcal{M}_1(X)} \rho_H^{\mathfrak{s}(H)}$
\end{center}
We write $\mathfrak{s} \leq \mathfrak{s'}$ if $\mathfrak{s}(H) \leq \mathfrak{s'}(H)$ for all $H \in \mathcal{M}_1(X)$.

\begin{definition}[Algebraic  domain of exponent $\alpha$]
\label{Definition_algebraic_domain}

We define \textbf{decay rates} $\alpha$ to be multiweights $\alpha$ which are real numbers in $[0,1]$ that are constant on each collection of boundary hypersurfaces $H$ of $X$ which are connected.

Given the Dolbeault operator $P=\overline{\partial}_E$ acting on $L^2\Omega^{0,\cdot}(X;E)$ for a pseudomanifold $\widehat{X}$, which is associated to the formal Dirac operator $D=P+P^*$ acting on sections in $L^2(X;F)$ where $F=\Lambda^{\cdot}(^{w}T^*X^{1,0}) \otimes E$ as introduced above, and given a decay rate $\alpha$ we define the domain $\mathcal{D}_{\alpha}(P)$ by 
\begin{multline}
\label{decay_domains_for_complex}
\mathcal{D}_{\alpha}(P) = \{ u \in L^2(X;F) : \exists (u_n) \subset \rho^{\alpha}_X L^2(X;F) \cap \mathcal{D}_{max} (P)\\ 
\text{such that } u_n \rightarrow u \; \text{and} \;  (P u_n) \; \text{is} \; L^2- \text{Cauchy} \}
\end{multline}
which is the graph closure of $\rho^{\alpha}_X L^2(X;F) \cap \mathcal{D}_{max} (P_X)$, and refer to it as \textit{\textbf{the algebraic domain of exponent $\alpha$}}.
\end{definition}

The \textit{\textbf{VAPS domain}} for the Dolbeault complex that we studied in \cite{jayasinghe2023l2} is the same as the \textit{\textbf{algebraic domain of exponent $\alpha=1/2$}}. Following the definition for the domain of the Dirac operator in \eqref{Domain_Dirac_first}, we have the VAPS domain for $D=\sqrt{2}(\overline{\partial}_E+\overline{\partial}_E^*)$. We showed in \cite[\S 3.3]{jayasinghe2023l2} that the domain for the Dirac operator $D$ in \eqref{Domain_Dirac_first} matches the VAPS domain for the Dirac operator constructed in \cite{Albin_2017_index} without referring to the Dolbeault complex.
We present the following definitions.
\begin{definition}
\label{Witt_assumption}
The operator $(D_X, \mathcal{D}_{1/2}(D_X))$ is said to satisfy the \textbf{\textit{geometric Witt condition}} if 
\begin{equation}
    Y \in \mathcal{S}(X), y \in Y \implies \text{Spec}(D_{Z_y}) \cap (-\frac{1}{2},\frac{1}{2}) = \emptyset
\end{equation}
If instead, we only require
\begin{equation}
    Y \in \mathcal{S}(X), y \in Y \implies \text{Spec}(D_{Z_y}) \cap \{ 0 \} = \emptyset
\end{equation}
then we say that $(D_X)$ satisfies the \textit{\textbf{Witt condition}}.
\end{definition}

It is known that Dirac complexes which satisfy the geometric Witt condition are essentially self adjoint, and we refer the reader to \cite{Albin_2017_index} for a detailed discussion. If only the Witt condition is satisfied, then we can pick domains $\mathcal{D}_{\alpha}(P)$ including the minimal, maximal and VAPS domains for $P=\overline{\partial}_E$ (see, e.g., \cite{jayasinghe2023l2}).

It is easy to check that given $\mathcal{P}_{\alpha}(X):=(L^2\Omega^{0,\cdot}(X;E), \mathcal{D}_{\alpha}(P), P)$, when the Witt condition is satisfied, the complex has the adjoint complex $\mathcal{P}_{1-\alpha}^*(X):=(L^2\Omega^{0,n-\cdot}(X;E), \mathcal{D}_{1-\alpha}(P^*), P^*)$.

\subsubsection{Domains and boundary conditions for complexes on fundamental neighbourhoods}
\label{subsubsubsection_Neumann_boundary_condition}

In \cite{jayasinghe2023l2} we studied local domains and local complexes for the de Rham and Dolbeault complexes at isolated fixed points corresponding to the global VAPS domain on stratified pseudomanifolds with wedge metrics.
Here we extend this to the case of certain domains for complexes satisfying the Witt condition.

\begin{remark}[Convention]
\label{Remark_convention_product_type_metrics_only_for_local_complexes}
    We will only define local domains on truncated tangent cones of isolated fixed points corresponding to Hamiltonian K\"ahler actions. In particular the metrics on the neighbourhoods are \textit{\textbf{product type metrics}} obtained by \textit{freezing coefficients} on the set $\beta_X^{-1}(a') \subset X$ where $a'$ is the fixed point on $\widehat{X}$. 
    
    The main reason for this is the ease in defining product complexes for products of cones with product type metrics. It is easy to extend the notions of local complexes we define here to non-product type metrics when the weights $\gamma_j$ at the given fixed point are either all positive or all negative since it is clear to see that the definitions for local domains in \eqref{equation_local_complex_first_factor} and \eqref{equation_local_complex_second_factor} below can be extended to fundamental neighbourhoods which are purely attracting and expanding, respectively.

    When the truncated tangent cone of a critical point has both attracting and expanding factors, the cohomology of the expanding factor is dual to the cohomology if it were attracting.   
\end{remark}

Given a twisted Dolbeault complex $\mathcal{P}(X)=(L^2(X;F),\mathcal{D}_{\min}(P), P)$ where $P=\overline{\partial}_E$ on a smooth Riemannian manifold $X$, given a fundamental neighbourhood $U_a \subset X$ of a point $a \in X$, we study local complexes on the neighbourhood. We will denote the operator restricted to $U$ by $P_U$, denoting it by $P$ when it is understood from context that we are studying the operator of the complex on $U$.

We define the complex $\mathcal{P}_N(U)$ to be  
$(L^2(U;F),\mathcal{D}_{max}(P_U), P_U)$ where $P_U$ is the restriction of the operator $P$ to sections in $L^2(U;F)$, and
\begin{equation}
    \mathcal{D}_{max}(P_U)=\{ u \in L^2(U;F) : P u \in L^2(U;F) \}
\end{equation}
where $P u$ is defined in the distributional sense.
This fixes the domain for operators in the \textit{\textbf{adjoint Hilbert complex of $\mathcal{P}_N(U)$}} which we \textit{\textbf{denote by $(\mathcal{P}_N(U))^*$}}.
This induces a domain for the Dirac-type operator $D=P+P^*$.
We refer the reader to \cite[\S 5.2.3]{jayasinghe2023l2} for a more detailed exposition of the choices of domains in the smooth setting.

We now consider the case when $\widehat{U} \subset \widehat{X}$ is singular. Given the complex $\mathcal{P}_{\alpha}(X)=(L^2(X;F), \mathcal{D}_{\alpha}(P), P)$, we define the complex $\mathcal{P}_{\alpha,N}(U)=(L^2(U;F),\mathcal{D}_{\alpha,N}(P_{U}), P_{U})$ with the domain
\begin{equation}
\label{equation_local_complex_first_factor}
    \mathcal{D}_{\alpha, N}(P_U):= \text{graph closure of } \{ \mathcal{D}_{\max}(P_{U}) \cap \rho_X^{\alpha}L^2(U;F) \}
\end{equation}
where $\rho_X$ is the boundary defining function of the original stratification, as discussed in Subsection \ref{subsection_local_structures}.
Similarly, we can define the complex $\mathcal{Q}_{1-\alpha,N}(U)=(L^2(U;F),\mathcal{D}_{1-\alpha,N}(P^*_{U}), P^*_{U})$ with the domain
\begin{equation}
\label{equation_local_complex_second_factor}
    \mathcal{D}_{1-\alpha, N}(P^*_U):= \text{graph closure of } \{ \mathcal{D}_{\max}(P^*_{U}) \cap \rho_X^{1-\alpha}L^2(U;F) \},
\end{equation}
the adjoint complex of which is 
$\mathcal{Q}^*_{1-\alpha,N}(U)=(L^2(U;F),(\mathcal{D}_{1-\alpha}(P^*_{U}))^*, P_{U})$.

In the case where there is a metric product neighbourhood $U_s \times U_u$, it is clear that we can define the product complex, and we do this for neighbourhoods of isolated fixed points on the tangent cone as follows.

\begin{definition}   
\label{definition_local_complex}
Consider a \textbf{\textit{global complex}} $\mathcal{P}_{\alpha}(X)=(L^2(X;F), \mathcal{D}_{\alpha}(P), P)$ on a stratified pseudomanifold $\widehat{X}$ equipped with a wedge K\"ahler structure, K\"ahler circle action and stratified K\"ahler Hamiltonian Morse function $h$ where the K\"ahler circle action gives rise to a geometric endomorphism $T_{\theta}$ which has only isolated fixed points for generic values of $\theta$. Let $a$ be an isolated fixed point of $T_{\theta}$ with a fundamental neighbourhood of the resolved tangent cone $U_a=U_{a,s}\times U_{a,u}$ where we assume the metric respects this splitting as in as in Definition \ref{Definition_Kahler_Morse_function}. Then we define the \textbf{\textit{local complex of $\mathcal{P}_{\alpha}(X)$ at $U_a$}}, denoted $\mathcal{P}_{\alpha,B}(U_a)=(L^2(U_a;F),D_{\alpha,B}(P_{U_a}), P_{U_a})$ to be the product complex
\begin{equation}
    \mathcal{P}_{\alpha,N}(U_{a,s})\times \mathcal{Q}_{1-\alpha,N}(U_{a,u})
\end{equation}
where the two factors are as defined above. 
We refer to this as simply the \textbf{\textit{local complex}} when the global complex is clear by context.
\end{definition}

\begin{remark}[Domains defined on tangent cones with the refined stratification]
    As discussed in Subsection \ref{subsection_local_structures}, when considering an isolated fixed point with a fundamental neighbourhood $U_a =\mathbb{D}^k \times C(L)$, we will use the fact that $U_a$ is homeomorphic to $C(Z)$ where $Z$ is the resolution of the join $\mathbb{S}^{k-1} \star \widehat{L}$, equipped with a wedge metric over the link $Z$.
    Restricted to such a chart, the above discussion shows that the choices of domains are completely determined by the metric and the boundary defining functions corresponding to the \textbf{original} stratification on $U_a$ as opposed to the refined stratification introduced in Definition \ref{Definition_refined_stratification}. The boundary defining functions of the original stratification lift to continuous functions on the manifold with corners corresponding to the resolution of the fundamental neighbourhood with the refined stratification.    
    The multi-functions $\rho(H)^{\alpha(H)}$ are defined using the boundary defining functions corresponding to the original stratification, and we can use these in defining the domains.
\end{remark}

\subsubsection{Equivariant Hilbert complexes}
\label{subsection_equivariant_Hilbert_complexes}

Here we study the subcomplexes of Hilbert complexes obtained by restricting to eigensections of $\sqrt{-1}L_V$ with a fixed eigenvalue $\mu$, where $V$ is a Hamiltonian vector field which is Killing, and $L$ denotes the Lie derivative.
Given a twisted Dolbeault complex $\mathcal{P}=(H=L^2\Omega^{0,\cdot}(X;E),\mathcal{D}(P),P)$, where $P=\overline{\partial}_E$ where $X$ has a K\"ahler Hamiltonian Morse function corresponding to the Hamiltonian vector field $V$, we have that there are equivariant Hilbert subspaces $L^2_{\mu}\Omega^{0,q}(X;E)$ such that there is an orthonormal decomposition
\begin{equation}
\label{equation_decomposition_equivariant_eigs}
    L^2\Omega^{0,q}(X;E)=\oplus_{\mu \in I} L^2_{\mu}\Omega^{0,q}(X;E).
\end{equation}
where the indexing set $I$ corresponds to the eigenvalues $\mu$ of $\sqrt{-1}L_V$, and they can be indexed by integers up to shifts due the periodicity of the circle action (or since the Pontryagin dual of $S^1$ is $\mathbb{Z}$), where the shift accounts for various non-trivial twists $E$ (see Subsection \ref{subsubsection_spin_Morse_inequalities} for examples in the case of fractional twists of canonical bundles).

The existence of such decompositions is well known when $X$ is smooth as was utilized in \cite{wu1998equivariant}. In the singular setting, we can see that a decomposition as in equation \eqref{equation_decomposition_equivariant_eigs} exists for the complexes we focus on by the following argument.

Here $V$ is a wedge vector field since it generates an isometry for the wedge metric. 
Since it generates a K\"ahler isometry, $\sqrt{-1}L_V$ commutes with $P,P^*$ on $\mathcal{D}(D^2)$, and thus with $D$.
Since $D$ preserves eigenspaces of $D^2$ (with mixed degrees $q$) we see that $\sqrt{-1}L_V$ and $D^2$ are simultaneously diagonalizable, and we have a filtration of the eigenspaces of $D^2=\Delta$ given by $\mu$. If the eigenspaces of $D^2$ give an orthonormal decomposition of $ L^2\Omega^{0,q}(X;E)$, the decomposition in equation \eqref{equation_decomposition_equivariant_eigs} follows, and we will show that the complexes we mainly focus on in this article satisfy such decompositions in the proof of Proposition \ref{Proposition_equivariant_Fredholm_complexes}. In fact such decompositions follow from the Peter-Weyl theorem since the $S^1$ action induces a unitary action on the eigenspaces of the Laplace-type operator $D^2$.

\begin{definition}[Equivariant Hilbert subcomplex of eigenvalue $\mu$]
\label{Definition_equivariant_Hilbert_subcomplex}
Let $\mathcal{P}=(H,\mathcal{D}(P),P)$ be a twisted Dolbeault complex and $\sqrt{-1}L_V$ be as in the above discussion. 
Given an eigenvalue $\mu$ of $\sqrt{-1}L_V$, we define the \textit{\textbf{equivariant subcomplex of eigenvalue $\mu$}} to be $\mathcal{P}^{\mu}=(H_{\mu},\mathcal{D}^{\mu}(P),P)$ where $H_{\mu}$ is defined as in the discussion above and $\mathcal{D}^{\mu}(P):=\{s \in \mathcal{D}(P) \cap H_{\mu} \}$.
\end{definition}

As explained in the introduction, if the equivariant complexes $\mathcal{P}^{\mu}_{\alpha}$ are Fredholm for all $\mu$, then the complex 
$\mathcal{P}_{\alpha}$ is said to be \textit{\textbf{transversally Fredholm}}.

We observe that this definition applies to local complexes on local neighbourhoods $U_a$ of isolated fixed points defined by restriction as earlier, where we used the product type metric and the corresponding complex (see Remark \ref{Remark_convention_product_type_metrics_only_for_local_complexes}).

Consider the case of a K\"ahler Hamiltonian circle action generated by a global vector field $V$, and the Lie derivative on sections of the bundle $E$ which we denote by $L_V$.
We have the action at an isolated fixed point $a$, generated by the Hamiltonian vector field on the tangent cone $\widetilde{V}_a$. We know that $\sqrt{-1}L_{\widetilde{V}_a}$ commutes with the model operators on the local domains, analogously to the global case we discussed above.
Then given $\mathcal{P}_{\alpha,B}(U_a)$, we have the equivariant local complexes $\mathcal{P}^{\mu}_{\alpha,B}(U_a)$.

\begin{proposition}
\label{Proposition_equivariant_Fredholm_complexes}
Let $X$ be the resolution of a stratified pseudomanifold $\widehat{X}$ of dimension $2n$ with a K\"ahler wedge metric and a stratified K\"ahler Hamiltonian Morse function $h$ corresponding to an isometric $S^1$ action generated infinitesimally by a vector field $V$. Let $E$ be a Hermitian vector bundle on $X$, to which the action lifts, and consider the twisted Dolbeault complex $\mathcal{P}_{\alpha}(X)=(L^2\Omega^{0,\cdot}(X;E), \mathcal{D}_{\alpha}(P), P=\overline{\partial}_E)$. Let $\mu$ be an eigenvalue of $\sqrt{-1}L_V$, and let $U_a$ be a fundamental neighbourhood of a zero of $V$. Then 
\begin{enumerate}
    \item if $\alpha=1$ of $\alpha=1/2, \mathcal{P}_{\alpha}(X)$ is Fredholm and $\mathcal{P}^{\mu}_{\alpha,B}(U_a)$ is transversally Fredholm.
    \item if $\widehat{X}$ has only isolated singularities, then $\mathcal{P}_{\alpha}(X)$ is Fredholm and $\mathcal{P}^{\mu}_{\alpha,B}(U_a)$ is transversally Fredholm for any choice of domain $\mathcal{D}(P)$.
\end{enumerate}
Moreover the spectrum is discrete or these cases.
\end{proposition}

\begin{proof}
It is well known that the global Dolbeault complexes are Fredholm and have discrete spectrum for any domain for the case when $\widehat{X}$ has only isolated conic singularities (see, e.g. \cite{lesch1999differential}). 
For $\alpha=1/2$ in the case of general singularities, the global Dolbeault complexes are Fredholm by the results in \cite{Albin_2017_index}. For $\alpha=1$, the global complex is Fredholm since the complex and the cohomology can be defined independent of the metric (see Remark \ref{Remark_minimal_domain_equivalences}). In these cases we can find an orthonormal basis of eigensections and an orthonormal decomposition of $L^2\Omega^{0,q}(X;E)$.

Proposition 5.33 of \cite{jayasinghe2023l2} shows that local Dolbeault complexes are not Fredholm, but have a direct sum decomposition where the summands are finite dimensional joint eigenspaces of both the Laplace-type operator on the link $Z$ and the Laplace-type operator on $C(Z)$. Thus there is a representation of the circle action on the Hilbert spaces $L^2\Omega^{0,q}(U_a;E)$ (since the $S^1$ action commutes with the Laplace type operator on $C(Z)$) and the Peter-Weyl theorem implies the decomposition
\begin{equation}
\label{equation_decomposition_equivariant_eigs_local}
    L^2\Omega^{0,q}(U_a;E)=\oplus_{\mu \in I} L^2_{\mu}\Omega^{0,q}(U_a;E).
\end{equation}
In Proposition 5.33 of \cite{jayasinghe2023l2} (in particular see Remark 5.36 of that article), we showed that the local cohomology group on a cone $C(Z)$ with product type metric has a countable basis, as long as the Laplace-type operator on $Z$ is Fredholm, which it is in the setting of this proposition.

Moreover in this case the null space of the Laplace-type operator on such a cone $C(Z)$ is finite dimensional when restricted to the eigenspaces $\mu$ of the operator $\sqrt{-1}L_{\widetilde{V}_a}$ where $\widetilde{V}_a$ is the Hamiltonian vector field on the tangent cone. This can be seen as follows in the case of the VAPS domain for general singulariities.

By our assumption on actions being K\"ahler structure preserving and Killing, and that the fixed points are isolated, we observe that $\widetilde{V}_a$ restricts to a vector field on the link, the flow of which preserves the metric on the link. Thus the operator $\sqrt{-1}L_{\widetilde{V}_a}$ commutes with the Laplace type operator on the link $Z$.
We showed in Remark 5.36 of \cite{jayasinghe2023l2} that the null space of the Laplace-type operator on $C_x(Z)$ has a basis given by harmonic sections of the form $x^{a_{i,j}}s_{i}$ where $a_{i,j}$ is some real constant and $s_i$ are eigensections of the Laplace-type operator on $Z$. Moreover for each such $s_i$, there are finitely many $a_{i,j}$ such that $x^{a_{i,j}}s_{i}$ are harmonic.

Thus we see that
\begin{equation}
\label{equation_blah_2_3}
    \sqrt{-1}L_{\widetilde{V}_a}(x^{a_i}s_{i})=x^{a_i}\sqrt{-1}L_{\widetilde{V}_a}(s_{i})=\mu x^{a_i}s_i
\end{equation}
and there is a decomposition 
\begin{equation}
    L^2\Omega^{0,q}(Z;E)=\oplus_{\mu \in I} L^2_{\mu}\Omega^{0,q}(Z;E).
\end{equation}
given by the eigensections of the Laplace-type operator on $Z$.
Thus for each eigenvalue $\mu$, there are only finitely many linearly independent eigensections $s_{i}$ for which the equation \eqref{equation_blah_2_3} holds, and we can find a finite basis for the elements of the null space for the Laplace-type operator on $C_x(Z)$ that has eigenvalue $\mu$ with respect to $\sqrt{-1}L_{\widetilde{V}_a}$.

Similar arguments work for the case of isolated singularities as as well as for $\alpha=1$ where the Laplace-type operator on the link $Z$ is Fredholm (since $Z$ is a smooth manifold in the isolated case and by the considerations in Remark \ref{Remark_minimal_domain_equivalences} for $\alpha=1$), proving the two numbered statements.

The discreteness of the spectrum for the local complexes also follows from an explicit description of the spectrum in terms of Sturm-Liouville operators similar to that in Proposition 5.33 of \cite{jayasinghe2023l2}. For the global complex on a Witt space, the discreteness of the spectrum and Weyl laws were establishded in \cite{Albin_2017_index} for totally geodesic wedge metrics, and for the de Rham complex with the minimal/ maximal domains in \cite{Jesus2018Wittens} where a globalization result is used after the discreteness of the spectrum is established for the neighbourhoods in Proposition 14.2 of that article. Since the minimal domain of the Dolbeault complex (case of $\alpha=1$) corresponds to the same Laplacian as that for the minimal domain for the de Rham complex (upto a constant), the discreteness of the spectrum follows for that case.
\end{proof}

While we focus on the complexes in the above Proposition, we write most of the proofs of results in an abstraction that generalizes to other choices of domains once an analog of the Proposition above is proven for them.

\subsection{Geometric endomorphisms}
\label{subsection_geometric_endo_no_deform}

In this section we briefly review the notion of a geometric endomorphism. 
\begin{definition}
\label{geometric_endo}
An \textbf{\textit{endomorphism}} $T$  \textbf{\textit{of a Hilbert complex $\mathcal{P}=(H,\mathcal{D}(P),P)$}} is given by an n-tuple of maps $T=(T_0, T_1,...,T_n)$, where $T_k:H_k \rightarrow H_k$ are bounded maps of Hilbert spaces, that satisfy the following properties.
\begin{enumerate}
    \item $T_k(\mathcal{D}(P_k)) \subseteq \mathcal{D}(P_k)$
    \item $P_k \circ T_k= T_{k+1} \circ P_k$ on $\mathcal{D}(P_k)$
\end{enumerate}
Each endomorphism $T_k$ has an adjoint $T_k^*$. If each $T_k^*$ preserves the domain $\mathcal{D}(P_k^*)$, then we call $T^*=(T^*_0, T^*_1,...,T^*_n)$ the \textit{\textbf{adjoint endomorphism}} of the dual complex.
\end{definition}

The commutation condition (condition 2) ensures that the endomorphisms will induce a map on the kernel of the Dirac-type operators that preserves the grading.

\begin{definition}
%\label{definition_geometric_endomorphism_proper}
An endomorphism $T^{\mathcal{P}}$ of a Hilbert complex of the form $\mathcal{P}=(L^2(X;F),\mathcal{D}(P),P)$ on a stratified pseudomanifold $\widehat{X}$ is a \textbf{\textit{geometric endomorphism}} if there is a smooth self map $f:X \rightarrow X$ and smooth bundle morphisms $\varphi: f^*F \rightarrow F$ such that 
\begin{equation}
    T^{\mathcal{P}}_f S=\varphi (f^* S)
\end{equation}
for sections $S \in L^2(X;F)$.
\end{definition}

The notation we use is standard 
but hides the fact that the definition of the geometric endomorphism involves $F, f^*F$.
In the setting of K\"ahler circle actions studied in this article we have a family of self maps $f_{\theta}$ labelled by $\theta \in S^1$, and we will use the notation $T^{\mathcal{P}}_{\theta}:=T^{\mathcal{P}}_{f_{\theta}}$, or simply $T_{\theta}$ when the complex $\mathcal{P}$ is clear by context. 

\begin{definition}
\label{definition_geometric_endomorphism_proper}
Given an isometric K\"ahler Hamiltonian circle action generated by a K\"ahler Hamiltonian wedge vector field $V$, which generates a family of geometric endomorphisms $T_{\theta}$ on a Dolbeault complex $\mathcal{P}=(L^2(X;F),\mathcal{D}(P),P)$, and if the operator $cl(V)$ preserves the domain of the Dirac-type operator $\mathcal{D}(P+P^*)$ given by \eqref{Domain_Dirac_first}, then we say that each $T_\theta$ is a geometric endomorphism of the complex.
\end{definition}

We show that the condition is satisfied by geometric endomorphisms for the domains studied in Proposition \ref{Proposition_equivariant_Fredholm_complexes}.

\begin{proposition}
In the same setting as Proposition \ref{Proposition_equivariant_Fredholm_complexes} for the global twisted Dolbeault complex $\mathcal{P}_{\alpha}(X)=(L^2\Omega^{0,\cdot}(X;E), \mathcal{D}_{\alpha}(P), P=\overline{\partial}_E)$, the endomorphisms $T_{\theta}$ are geometric endomorphisms of the complex.
\end{proposition}

\begin{proof}
In \cite[\S 4]{jayasinghe2023l2}, we showed that certain self-maps, including the group actions studied in this paper
induce endomorphisms on the Dolbeault complex with the VAPS domain. It is easy to see that the proof of \cite[Proposition 4.9]{jayasinghe2023l2}  generalizes to the case of the $\mathcal{D}_{\alpha}(P)$ domains that we study in this article.

Next we show that $(V^{1,0})^* \wedge$ and $\iota_{V^{0,1}}$ preserve the domains $\mathcal{D}_{\alpha}(P)$ and $\mathcal{D}_{\alpha}(P^*)$ respectively for any $\alpha$, which implies that $cl(V)$ preserves the domains $\mathcal{D}_{\alpha}(D)=\mathcal{D}_{\alpha}(P) \cap \mathcal{D}_{1-\alpha}(P^*)$.

Recall from the definition of the algebraic domain of exponent $\alpha$ for $P_X$ in \eqref{decay_domains_for_complex}, that it is the graph closure of $\rho^{\alpha}_X L^2(X;F) \cap \mathcal{D}_{max} (P_X)$ where $F=\Lambda^*((^wT^*X)^{0,1}) \otimes E$.

Since $dh$ is a smooth wedge form that is $L^{\infty}$ bounded, the wedge product $(V^{1,0})^* \wedge s$ is in $\rho^{\alpha}_X L^2(X;F)$ for $s \in \rho^{\alpha}_X L^2(X;F)$. Moreover since $(V^{1,0})^*=\overline{\partial} h$ (see equation \ref{remark_deformed_operators_identities_44}) it is $\overline{\partial}$ closed. Thus the Leibniz rule shows that $(V^{1,0})^* \wedge s \in \mathcal{D}_{max} (P_X)$ for $s \in \mathcal{D}_{max} (P_X)$.

We now consider the graph closure of $\mathcal{D}_{\max}(P) \cap \rho^{\alpha}_X L^2\Omega(X;E)$. If there is a sequence $\{u_n\}$ where $\{Pu_n\}$ is $L^2$-Cauchy, then so is $\{P_\varepsilon u_n\}$ where $P_\varepsilon=(P+\varepsilon \sqrt{-2} (V^{1,0})^* \wedge)$ since $(V^{1,0})^* \wedge$ is a bounded operator on $L^2(X;F)$, showing that $\mathcal{D}_{\alpha}(P)$ is preserved by $(V^{1,0})^*$. 

A variation of this argument replacing $P$ with $P^*$ and $(V^{1,0})^* \wedge$ with $\iota_{V^{0,1}}$ shows that $\iota_{V^{0,1}}$ preserves the domain $\mathcal{D}_{\alpha}(P^*)$ for any $P$. This proves the result.
\end{proof}

We observe that $f_{\theta}^{-1}=f_{-\theta}$. 
We denote the endomorphism induced on the adjoint complex by $f_{\theta}^{-1}$ as $T^{\star}_{\theta}$, which is $(T^{\mathcal{P}}_{\theta})^*$ and we shall study this in more detail in the next subsection.

\subsection{Hodge star and Serre duality }
\label{subsection_Serre_duality}

For the de Rham complexes the adjoint complex can be defined via the Hodge star operator which introduces a duality. In the case of the Dolbeault complex, the Hodge star operator intertwines a complex with its Serre dual complex which is different from the adjoint complex.

We begin by studying the de Rham complex briefly. 
Given $\mathcal{A}_{\alpha}(X)=(L^2\Omega^k,(\mathcal{D}_{\alpha})(d),d)$, 
it is easy to see that the adjoint complex is $(\mathcal{A}_{\alpha})^*(X)=(L^2\Omega^{n-k},\mathcal{D}_{1-\alpha}(\delta),\delta)$ where $\delta:=\star^{-1} d \star$, where $\star$ is the Hodge star operator which takes sections in $L^2\Omega^k$ to $L^2\Omega^{n-k}$. Moreover the domain for the Dirac type operator $D=d+\delta$ for the de Rham complex with domain $\mathcal{D}_{\alpha}(d)$ is given by $\mathcal{D}_{\alpha}(d) \cap \mathcal{D}_{1-\alpha}(\delta)$, which is also the domain for the Dirac type operator for the complex $(\mathcal{A}_{\alpha})^*(X)$. From this it is easy to see that the $\star$ operator intertwines the complexes. The corresponding duality is called Poincar\'e duality.

The duality corresponding to the isomorphism of complexes for twisted Dolbeault complexes given by the Hodge star operator is commonly referred to as Serre duality.
Let $E$ be a holomorphic vector bundle over a resolved complex pseudomanifold $X^{2n}$, equipped with a  Hermitian metric and a compatible connection. 
The Hermitian metric gives a conjugate-linear isomorphism $E \cong E^{\ast}$ between $E$ and its dual bundle. This induces a duality of forms valued in $E$ and $E^*$ and when $F=\Lambda^{p,0}X \otimes E$ (where $\Lambda^{p,q}X:= \Lambda^p (\prescript{w}{}{T^*X)^{1,0}} \otimes \Lambda^q (\prescript{w}{}{T^*X)^{0,1}}$)
\begin{equation}
    {\star}_E: \Omega^{p,q}(X;E) \rightarrow \Omega^{n-p,n-q}(X;E^*).
\end{equation}
The canonical bundle on the regular part $\widehat{X}^{reg}$ of the pseudomanifold $\widehat{X}$ with a complex structure is $K_X=\Lambda^{n,0}X$, which is a Hermitian line bundle. 
This is the dualizing sheaf in the smooth setting and $\Lambda^{n,n-q}X \otimes E$ is isomorphic to $\Lambda^{0,n-q}X \otimes K_X \otimes E$. For general singular spaces the algebraic dualizing sheaf becomes more complicated (see Part III, Section 7 of \cite{Hartshornebook}).
For smooth sections that are compactly supported on $\widehat{X}^{reg}$, we can write the adjoint operator of $\overline{\partial}_E$ as
\begin{equation}
\label{equation_dual_of_del_bar_serre}
    \overline{\partial}_F^*= (-1)^q \star_F^{-1} \circ \overline{\partial}_{K_X \otimes F^{\star}} \circ \star_F
\end{equation}
and the Laplace type operator is $\Delta_F=\overline{\partial}_F \overline{\partial}_F^*+\overline{\partial}_F^* \overline{\partial}_F$. 

Consider the complex $\mathcal{P}_{\alpha}(X)=(L^2\Omega^{0,\cdot}(X;F),\mathcal{D}_{\alpha}(\overline{\partial}_F),\overline{\partial}_F)$ where $F=\Lambda^{p,0}X \otimes E$. Then the adjoint complex is
\begin{equation}
    (\mathcal{P}_{\alpha})^*(X)=(L^2\Omega^{0,n-\cdot}(X;F),\mathcal{D}_{1-\alpha}(\overline{\partial}^*_F),\overline{\partial}^*_F).
\end{equation}
The Serre dual of the adjoint complex $(\mathcal{P}_{\alpha})^*(X)$ is defined to be
\begin{equation}
    ((\mathcal{P}_{\alpha})^*)_{SD}(X)=(L^2\Omega^{0,\cdot}(X;F^* \otimes K),\mathcal{D}_{1-\alpha}(\overline{\partial}_{F^* \otimes K}),\overline{\partial}_{F^* \otimes K})
\end{equation}
where $F^* \otimes K=\Lambda^{n-p,0} \otimes E^*$.
The isomorphism of these two complexes can be easily deduced from the identity \eqref{equation_dual_of_del_bar_serre}.
The Serre dual of the complex $\mathcal{P}_{\alpha}(X)$ is defined to be 
\begin{equation}
    (\mathcal{P}_{\alpha})_{SD}(X)=(L^2\Omega^{0,n-\cdot}(X;F^*\otimes K),\mathcal{D}_{\alpha}(\overline{\partial}^*_{F^* \otimes K}),\overline{\partial}^*_{F^* \otimes K})
\end{equation}
where we note that $L^2\Omega^{n,n-q}(X;F^*)=L^2\Omega^{0,n-q}(X;F^*\otimes K)$,
and it is easy to see that \textit{\textbf{Serre duality commutes with adjoints}}, that is the adjoint of the Serre dual of a complex is the Serre dual of the adjoint of a complex.

One can similarly define the Serre dual of local complexes. That is, given a local complex $\mathcal{P}_{\alpha,B}(U_a)=(L^2\Omega^{0,\cdot}(U_a;E),\mathcal{D}_{\alpha,B}(P_{U_a}),P)$ where $P=\overline{\partial}_F$ of $\mathcal{P}_{\alpha}$ above corresponding to a fixed point of a K\"ahler circle action corresponding to a geometric endomorphism $T_{\theta}$, we have the  Serre dual 
\begin{equation}
    (\mathcal{P}_{\alpha,B})_{SD}(U_a)=(L^2\Omega^{0,n-\cdot}(U_a;F^* \otimes K),\mathcal{D}_{\alpha,-B}(Q_{U_a}),Q)
\end{equation}
where $Q=\overline{\partial}^*_{F^* \otimes K}$.

\begin{remark}
\label{remark_domains_and_geometric_endomorphisms}
Since the product complex in Definition \ref{definition_local_complex} depends on the decomposition of the tangent cone into the stable and unstable cones, it depends on the choice of geometric endomorphisms $T_{\theta}$.
Given a circle action, here we denote by $-B$ the choice of domains (and therefore complexes) corresponding to the geometric endomorphism $T_{-\theta}:=T^{(\mathcal{P}_{\alpha})_{SD}(U_a)}_{-\theta}$ for $(\mathcal{P}_{\alpha})_{SD}(U_a)$. 
We denote the geometric endomorphisms on $(\mathcal{P}_{\alpha})_{SD}(U_a)$ and $(\mathcal{P}_{\alpha})(U_a)$ by $T_{\theta}$, with some abuse of notation.
\end{remark}

The choices of domains as clarified in the remark above can be explained as follows.
By the definition of adjoints, we see that for sections $u,v \in L^2(U_a;F)$, the geometric endomorphisms on $\mathcal{P}(U_a)=(L^2(U_a;F),\mathcal{D}_{\alpha,B}(P_{U_a}),P_{U_a})$ for $P=\overline{\partial}_F$ and the adjoint complex satisfy
\begin{equation}
    \langle T^{\mathcal{P}}_{\theta}u,v \rangle_{L^2(U_a;F)}=\langle u, (T^{\mathcal{P}}_{\theta})^*v \rangle_{L^2(U_a;F)}.
\end{equation}
We see that for sections $u=\sum_i u_i e_i, v=\sum_i v_i e_i$ of $L^2(U_a;F)$, where $\{e_i\}$ form a local orthonormal frame for $F$, 
\begin{equation}
\int_{U_a} \langle f_{\theta}^*u, v \rangle_{E} =\sum_{i,j} \int_{U_a} f_{\theta}^*u_j \wedge \overline{\star} v_j h^E_{ij},
\end{equation}
where $h^E_{ij}=h^E(e_i,e_j)$ is the Hermitian metric on $E$. Since we demand that the group action lifts to one that preserves the Hermitian metric on $E$, we have that 
\begin{equation}
\label{equation_compute_adjoint_endo_1}
\int_{U_a} f_{\theta}^*u_i \wedge \overline{\star} v_j h^E_{ij}=\int_{U_a} (f_{-\theta})^*(f_{\theta}^*u_i \wedge \overline{\star} v_j) h^E_{ij}=\int_{U_a} u_i \wedge (f_{-\theta})^*(\overline{\star} v_i) h^E_{ij}
\end{equation}
where $f_{\theta}^*$ indicates the pullback on forms. Thus we have
\begin{equation}
\label{equation_adjoint_endo_111}
    \langle T^{\mathcal{P}}_{\theta}u,v \rangle_{L^2({U_a};F)}=\langle u, (T^{\mathcal{P}}_{\theta})^*v \rangle_{L^2({U_a};F)}=\langle (\overline{\star} u),T^{\mathcal{P}_{SD}}_{-\theta} (\overline{\star}v) \rangle_{L^2({U_a};F^* \otimes K)}
\end{equation}
where
\begin{equation}
\label{equation_adjoint_endo_11}
    T_{-\theta}^{\mathcal{P}_{SD}}=(\overline{\star})^{-1} \circ  (T^{\mathcal{P}}_{\theta})^* \circ \overline{\star}
\end{equation}
where we use the fact that $\overline{\star}$ commutes with $(f_{-\theta})^*$ and the geometric endomorphisms we study for K\"ahler isometries, where $T_{\theta}^{SD}$ is the geometric endomorphism on the Serre dual complex for the self maps $f_{\theta}$. Thus the stable cone for $T^{\mathcal{P}}_{\theta}$ at a fixed point $a$ is the unstable cone for $T^{\mathcal{P}}_{-\theta}$ and $T_{-\theta}^{\mathcal{P}_{SD}}$.

This also shows why the choice of domain when restricting to a fundamental neighbourhood of a fixed point that was clarified in Remark \ref{remark_domains_and_geometric_endomorphisms} is the correct one if we want to relate traces of geometric endomorphisms on the cohomology of the complex and the cohomology of the Serre dual complex. Again we will simply denote these geometric endomorphisms as $T_{\theta}, T_{-\theta}$ when the complexes are clear by context.
We will study geometric endomorphisms for Witten deformed complexes in Subsection \ref{subsubsection_geometric_endo_witten_deformed}.

In \cite{jayasinghe2023l2} we mostly worked with the case of $\alpha=1-\alpha=0.5$ where much of this is simpler.
We refer the reader to the explicitly worked out example in \cite[\S 7.1.2]{jayasinghe2023l2} for an illustration of the minimal and maximal domains for the example of the \textit{cusp curve} where the Dolbeault-Dirac operator is not essentially self-adjoint.

\subsection{Polynomial Lefschetz supertraces}

After constructing the holomorphic Witten instanton complex for the K\"ahler Hamiltonian actions that we study the Morse inequalities follow from the application of results proven in \cite{jayasinghe2023l2} for abstract Hilbert complexes which we review here.

\begin{definition}
\label{L2_Lefschetz}
Let $X$ be a pseudomanifold with an Fredholm complex $\mathcal{P}=(H,\mathcal{D}(P), P)$ with an endomorphism $T$, we define the associated \textit{\textbf{Lefschetz polynomial}} to be
\begin{equation} 
L(\mathcal{P},T)(b):= \sum_{k=0}^n b^k tr(T_k|_{\mathcal{H}^k(\mathcal{P})}) \in \mathbb{C}[b]
\end{equation}
and the associated \textit{\textbf{Lefschetz number}} to be
\begin{equation} 
\begin{split}
L(\mathcal{P},T) &:= L(\mathcal{P},T)(-1) \\
 & = Tr(T|_{\mathcal{H}^{+}({\mathcal{D_P}})})- Tr(T|_{\mathcal{H}^{-}({\mathcal{D_P}})})
\end{split}
\end{equation}
where $T_k|_{\mathcal{H}^k(\mathcal{P})} :\mathcal{H}^k(\mathcal{P}) \rightarrow \mathcal{H}^k(\mathcal{P})$ is the map induced by the endomorphism $T$ on the $k$-th degree cohomology group $\mathcal{H}^k(\mathcal{P})$ and $T|_{\mathcal{H}^{\pm}({\mathcal{D_P}})}$ is the map induced on the cohomology of the associated Dirac complex $\mathcal{D_P}$.
\end{definition}

\begin{definition}
\label{definition_polynomial_Lefschetz_supertrace}
Let $\mathcal{P}=(H,\mathcal{D}(P),P)$ be a wedge elliptic complex where the associated Laplace-type operators in each degree have discrete spectrum and trace class heat kernels. Let $T$ be an endomorphism of the complex. For all $t \in \mathbb{R}^+$, we define the \textbf{\textit{polynomial Lefschetz heat supertrace}} as
\begin{equation}
    \mathcal{L}(\mathcal{P},T)(b,t)=\mathcal{L}(\mathcal{P}(X),T)(b,t):=\sum_{k=0}^n  Tr( b^k T_k e^{-t \Delta_k})
\end{equation}
and we call $\mathcal{L}(\mathcal{P},T)(-1,t)$ the \textit{\textbf{Lefschetz heat supertrace}} associated to the complex. Here, we use the notation $\mathcal{L}(\mathcal{P}(X),T)(b,t)$ when the complex $\mathcal{P}=\mathcal{P}(X)$ is associated to a pseudomanifold $X$.
\end{definition}

The following is Theorem 3.14 of \cite{jayasinghe2023l2}.

\begin{theorem}
\label{Lefschetz_supertrace}
Let $\mathcal{P}=(H,\mathcal{D}(P), P)$ be a Hilbert complex where the associated Laplace-type operators in each degree have discrete spectrum and trace class heat kernels. Let $T$ be an endomorphism of the complex. For all $t \in \mathbb{R}^+$
\begin{equation} 
\label{equation_with_the_b}
    \mathcal{L}(\mathcal{P},T)(b,t)=L(\mathcal{P},T)(b)+ (1+b) \sum_{k=0}^{n-1} b^k S_k(t)
\end{equation}
where
\begin{equation} 
\label{equation_error_in_Lefschetz_supertrace}
    S_k(t)=\sum_{\lambda_i \in Spec(\Delta_k)} e^{-t \lambda_i}  \langle T_k v_{\lambda_{i}}, v_{\lambda_{i}} \rangle
\end{equation}
where $\{v_{\lambda_i}\}_{i \in \mathbb{N}}$ are an orthonormal basis of co-exact eigensections of $\Delta_k$. In particular
\begin{equation}
\label{Heat_formula_all_t_P}
    L(\mathcal{P},T)= \mathcal{L}(\mathcal{P},T)(-1,t),
\end{equation}
and the Lefschetz heat supertrace is independent of $t$.
\end{theorem}

In \cite{jayasinghe2023l2} we used the result above to prove the strong form of the de Rham Morse inequalities once the Witten instanton complex was constructed and it will play a similar role in proving the holomorphic Morse inequalities.
The following is Proposition 3.16 of \cite{jayasinghe2023l2}.

\begin{proposition}[Duality]
\label{proposition_Lefschetz_on_adjoint}
Let $\mathcal{P}=(H,\mathcal{D}(P), P)$ be an elliptic complex of maximal non-trivial degree $n$ and let $T$ be an endomorphism. Let $\mathcal{P^*}$ be the dual complex and let $T^*$ denote the endomorphism induced on the dual complex. Then
\begin{equation}
\label{Kalman_filter}
   b^n \mathcal{L}(\mathcal{P}^*,T^*)(b^{-1},t)=\mathcal{L}(\mathcal{P},T)(b,t).
\end{equation}
In particular, we have the equality
\begin{equation}
    L(\mathcal{P},T)= (-1)^n L(\mathcal{P^*},T^*).
\end{equation}
\end{proposition}

The following result summarizes the main dualities we use in this article, and is a generalization of Proposition 7.5 of \cite{jayasinghe2023l2} where it was proven for the VAPS domain.

\begin{proposition}
\label{Proposition_duality_complex_conjugation_for_local_Lefschetz_numbers}
Let $\widehat{X}$ be a stratified pseudomanifold of dimension $2n$ with a wedge metric and complex structure and let $E$ be a Hermitian bundle. Let $\mathcal{P}_{\alpha}(X)=(L^2\Omega^{0,\cdot}_{\alpha}(X;E),\mathcal{D}_{\alpha}(P), P)$ be a transversally Fredholm complex where $P=\overline{\partial}_E$, so that $(\mathcal{P}_{\alpha})^*(X)$ 
is the adjoint complex and $(\mathcal{P}_{\alpha})_{SD}(X)$ is the Serre dual complex, as defined in Subsection \ref{subsection_Serre_duality}.
Let $T_{\theta}$ be a geometric endomorphism on $\mathcal{P}(X)$ corresponding to a K\"ahler circle action with isolated fixed points, including one at $a$ with a fundamental neighbourhood $U_a \subset X$, where we denote the geometric endomorphisms on each complex by $T_{\theta}$ by abuse of notation. Then we have that
\begin{equation}
    \mathcal{L}(\mathcal{P}^{\mu}_{\alpha}(X),T_{\theta})(b,t)=b^n \mathcal{L}((\mathcal{P}^{\mu}_{\alpha})^*(X),T_{\theta}^*)(b^{-1},t)=b^n\mathcal{L}((\mathcal{P}^{\mu}_{\alpha})_{SD}(X),T_{-\theta})(b^{-1},t).
\end{equation}
Similarly for local complexes as defined in Subsection \ref{subsection_Serre_duality}, we have
\begin{equation}
    \mathcal{L}(\mathcal{P}^{\mu}_{\alpha,B}(U_a),T_{\theta})(b,t)=b^n \mathcal{L}((\mathcal{P}^{\mu}_{\alpha,B})^*(U_a),T_{\theta}^*)(b^{-1},t)=b^n\mathcal{L}((\mathcal{P}^{\mu}_{\alpha, B})_{SD}(U_a),T_{-\theta})(b^{-1},t)
\end{equation}
where each choice of domain $B$ corresponds to the geometric endomorphism appearing in $\mathcal{L}$ (see Remark \ref{remark_domains_and_geometric_endomorphisms}).
\end{proposition}

\begin{proof}
The proofs are similar for both the global and local complexes. The first equality in each follows from Proposition \ref{proposition_Lefschetz_on_adjoint}. 

The second equality follows from the discussion in Subsection \ref{subsection_Serre_duality} which shows the isomorphism between adjoint complexes and the Serre dual complexes, together with the computations in equations \eqref{equation_compute_adjoint_endo_1}, \eqref{equation_adjoint_endo_111}, \eqref{equation_adjoint_endo_11} which clarify the relationship between the adjoint endomorphism and the geometric endomorphism on the Serre dual complex.

\end{proof}

\begin{remark}[Convergence of the generating series]
\label{remark_convergence_of_generating_series}

We defined the traces of endomorphisms $T_{s,\theta}$ in equation \eqref{equation_renormalized_trace_definition} where the endomorphisms do not necessarily arise as geometric endomorphisms on $X$. We showed in Remark \ref{remark_local_extention_c_star} that the action can always be extended locally to be a $\mathbb{C}^*$ action on the tangent cone

We showed how to define renormalized traces for geometric endomorphisms on the tangent cone when the series do not converge in \cite[\S 5.2.5]{jayasinghe2023l2}, where we showed that for $\mathbb{C}^*$ actions, the renormalization is not needed for $s=|\lambda|<1$ where $\lambda$ is the $\mathbb{C}^*$ action and that the power series converges.    
\end{remark}

\section{Witten deformation for the Dolbeault complex}

In this section we introduce the Witten deformed local and global Dolbeault complexes.
Given a stratified pseudomanifold $\widehat{X}$ equipped with a wedge metric 
and a stratified K\"ahler Hamiltonian Morse function $h$ on the resolution $X$, and given a Hermitian bundle $E$ on $X$ equipped with a compatible connection we can construct a twisted Dolbeault complex $\mathcal{P}_{\alpha}(X)=(H=L^2\Omega^{0,\cdot}(X;E),\mathcal{D}_{\alpha}(P),P=\overline{\partial})$, where \textbf{we denote $\overline{\partial}_E$ by $\overline{\partial}$ to simplify the notation}.
We define the Witten deformed 
operators
\begin{equation}
\label{remark_deformed_operators_identities_44}
P_\varepsilon=\bar{\partial}_{\varepsilon}:=e^{-\varepsilon h} \bar{\partial} e^{\varepsilon h}=\bar{\partial}+\sqrt{-2} \varepsilon (V^{1,0})^* \wedge, \quad \bar{\partial}_{\varepsilon}^{*}=e^{\varepsilon h} \bar{\partial}^{*} e^{-\varepsilon h}=\bar{\partial}^{*}-\sqrt{-2} \varepsilon \iota_{V^{0,1}}
\end{equation}
and their sum defines the corresponding deformation of the Spin$^\mathbb{C}$ Dirac operator up to a constant factor of $\sqrt{2}$. 
In the first subsection we study identities for the operators, which we will use to prove estimates for localizing eigensections, as well as proving the \textit{\textbf{spectral gap}} result in Proposition \ref{Proposition_model_spectral_gap_modified_general} for Witten deformation on tangent cones in the second subsection.

\subsection{Deformed Hilbert complexes}
\label{subsection_deformed_Hilbert_complexes}

Given a resolved stratified pseudomanifold $X$ equipped with a wedge K\"ahler metric 
and a stratified K\"ahler Hamiltonian Morse function $h$, and given a Hermitian bundle $E$ with a connection to which the Hamiltonian action lifts as a linear action, we have the Dolbeault complexes $\mathcal{P}_{\alpha}(X)=(H^{\cdot}=L^2\Omega^{0,\cdot}(X;E),\mathcal{D}_{\alpha}(P),P)$ where $P=\overline{\partial})$. 

We show that there is an equivalence of domains for the Witten deformed and undeformed operators on the equivariant complexes.
Multiplication by $e^{-\varepsilon h}$ is a homeomorphism $H_{\mu}^q=L^2_{\mu}\Omega^{0,q}(X;E) \rightarrow H_{\mu}^q$ that takes a section in the domain of any closed extension $\mathcal{D}^{\mu}_{\alpha}(P)$ to one in $e^{-\varepsilon h} \mathcal{D}^{\mu}_{\alpha}(P)=\mathcal{D}^{\mu}_{\alpha}(P_\varepsilon)$
and conjugates $P$ and $P_\varepsilon$.
We define the \textit{\textbf{Witten deformed complex}} $\mathcal{P}_{\alpha,\varepsilon}=(H^q, \mathcal{D}_{\alpha}(P_{\varepsilon}), P_{\varepsilon})$, and equivariant complexes are notated $\mathcal{P}^{\mu}_{\alpha,\varepsilon}$ since $P_{\varepsilon},P^*_{\varepsilon}$ commute with $\sqrt{-1}L_V$. Multiplication by $e^{-\varepsilon h}$ induces an isomorphism between the complexes $\mathcal{P}^{\mu}_{\alpha,\varepsilon}(X)$ and $\mathcal{P}^{\mu}_{\alpha}(X)=\mathcal{P}^{\mu}_{\alpha,0}(X)$ for $P=\overline{\partial}$.
This can be summarized using the commutative diagram
\[\begin{tikzcd}
	{...} && {H_{\mu}^q} && {H_{\mu}^{q+1}} && {...} \\
	\\
	{...} && {H_{\mu}^q} && {H_{\mu}^{q+1}} && {...}
	\arrow["P", from=1-1, to=1-3]
	\arrow["P", from=1-3, to=1-5]
	\arrow["P", from=1-5, to=1-7]
	\arrow["{P_\varepsilon}", from=3-3, to=3-5]
	\arrow["{P_\varepsilon}", from=3-1, to=3-3]
	\arrow["{P_\varepsilon}", from=3-5, to=3-7]
	\arrow["{e^{-\varepsilon h}}", from=1-3, to=3-3]
	\arrow["{e^{-\varepsilon h}}", from=1-5, to=3-5]
\end{tikzcd}\]
where the domains $\mathcal{D}^{\mu}_{\alpha}(P)$ and $\mathcal{D}^{\mu}_{\alpha}(P_{\varepsilon})$ can be canonically identified by the unitary conjugation as described above. For $\overline{\partial}^*$ the isomorphism is given by the inverse multiplication (that is by $e^{+\varepsilon h}$ rather than $e^{-\varepsilon h}$).

Given an isolated fixed point $a$ with a fundamental neighbourhood of the tangent cone $U_a$, we have the local complex 
$\mathcal{P}^{\mu}_{\alpha,B}(U_a)=(L^2_{\mu}(U_a;F),\mathcal{D}^{\mu}_{\alpha,B}(P_{U_a}), P)$ as given by Definition \ref{definition_local_complex}. Our discussion above extends to local complexes and deformed local complexes, allowing us to define $\mathcal{P}^{\mu}_{\alpha,B,\varepsilon}(U_a)=(L^2_{\mu}(U_a;E),\mathcal{D}^{\mu}_{\alpha,B}((P_{U_a})_\varepsilon), P_{\varepsilon})$. There is a commutative diagram for the local deformed and undeformed complexes as in the global case.

Given an equivariant subcomplex of the Dolbeault complex $\mathcal{P}_{\alpha}^{\mu}(P)$ and the Witten deformed subcomplex 
$\mathcal{P}_{\alpha,\varepsilon}^{\mu}(P)$ for an eigenvalue $\mu$ of $\sqrt{-1}L_V$, then the domains satisfy $e^{-\varepsilon h} \mathcal{D}_{\alpha}^{\mu}(P)=\mathcal{D}_{\alpha}^{\mu}(P_\varepsilon)$, and $e^{+\varepsilon h} \mathcal{D}_{\alpha}^{\mu}(P^*)=\mathcal{D}_{\alpha}^{\mu}(P^*_\varepsilon)$ giving an isomorphism $\mathcal{D}_{\alpha}^{\mu}(D) \cong \mathcal{D}_{\alpha}^{\mu}(D_\varepsilon)$.   
Restricted to any neighbourhood of a fixed point $U_a$, we have $\mathcal{D}_{\alpha,B}^{\mu}(P_U)\cong \mathcal{D}_{\alpha,B}^{\mu}((P_U)_\varepsilon)$, and $\mathcal{D}_{\alpha,B}^{\mu}(D_U) \cong \mathcal{D}_{\alpha,B}^{\mu}((D_U)_\varepsilon)$.
The Serre duality that we discussed in Subsection \ref{subsection_Serre_duality} extends to the deformed complexes.

\subsubsection{Geometric endomorphisms for Witten deformed complexes.}
\label{subsubsection_geometric_endo_witten_deformed}

In Subsection 6.5.1 of \cite{jayasinghe2023l2} we introduced Witten deformed geometric endomorphisms corresponding to geometric endomorphisms, for the case of the de Rham complex. Here we discuss the case for the Dolbeault complex when the geometric endomorphism $T_{\theta}$ corresponds to a K\"ahler isometric circle action.

Given a geometric endomorphism $T_{\theta}$ of $\mathcal{P}$, we define \textit{\textbf{Witten deformed geometric endomorphism}} on $\mathcal{P}_{\varepsilon}$ on $X$ to be $T_{{\theta},\varepsilon}=e^{-\varepsilon h} T_{\theta} e^{\varepsilon h}$, and the corresponding adjoint endomorphism $T^*_{{\theta},\varepsilon}=e^{\varepsilon h} T_{\theta}^* e^{-\varepsilon h}$, where $T_{\theta}=T_{{\theta},0}$ is the geometric endomorphism on the un deformed complex $\mathcal{P}$. A key observation here is that the geometric endomorphisms $T_{\theta}$ commute with multiplication by $h$, and therefore by $e^{-\varepsilon h}$, and so the geometric endomorphism for the Witten deformed and undeformed complexes are identical on sections of the Hilbert spaces. This is because pulling back by the family of self-maps corresponding to the geometric endomorphisms $T_{\theta}$ preserves the corresponding K\"ahler Hamiltonian $h$.

Thus our discussion in Subsection \ref{subsection_geometric_endo_no_deform} for the adjoint endomorphism applies even to the Witten deformed case.

\subsection{Bochner identities, localization and model operators}

Here we follow the notation introduced in Subsection \ref{subsubsection_infinitesimal_actions_lie} following that of \cite[\S 3]{wu1998equivariant}, and refer the reader to that article for more details.
Recall that $\overline{L_V}=\{\iota_V,\nabla^E\}$ is the Lie derivative of $E$ valued anti-holomorphic forms along $V$, where the fibers of $E$ over different points on the orbits of points are related by the lifted circle action, and that we denote by $\sqrt{-1}L_V$ the infinitesimal generator of the $S^1$ action on $L^2(X;F)=L^2\Omega^{0,\cdot}(X;E)$.

\begin{proposition}[Operator identities]
\label{Proposition_operator_identities_Gollum}
In the setting above we have the identities
\begin{equation}
\label{equation_operator_identities_1}
    D=\sqrt{2}\left(\bar{\partial}+\bar{\partial}^{*}\right),
\end{equation}
\begin{equation}
\label{equation_operator_identities_2}
    D_{\varepsilon}:= D + \sqrt{-1} \varepsilon cl(V) =\sqrt{2}(\bar{\partial}_{\varepsilon}+\bar{\partial}_{\varepsilon}^{*}),
\end{equation}
and
\begin{equation}
\label{equation_operator_identities_3}
\Delta_{\varepsilon}:=(D+\sqrt{-1} \varepsilon cl(V))^{2}=D^{2}+\varepsilon^2|V|^{2} + \varepsilon K
\end{equation}
where 
\begin{equation}
\label{equation_operator_identities_4}
    K:=\frac{1}{2} \sqrt{-1} \sum_{i=1}^{2 n} cl\left(e_{i}\right) cl\left(\nabla_{e_{i}} V\right)+\left.\sqrt{-1} \operatorname{tr} \nabla \cdot V\right|_{^wT^{0,1} X} -2\sqrt{-1} (\overline{L_V})
\end{equation}
where $\nabla( \cdot )$ 
is the divergence operator on vector fields on $X^{reg}$ which extends uniquely to an operator on wedge vector fields.
\end{proposition}

\begin{proof}
These identities are proven in the smooth setting in Lemma 3.1 of \cite{wu1998equivariant} by straightforward computations which we simply replicate in the singular setting (adjusting for the difference in our sign convention as described in Remark \ref{Remark_sign_convention_Hamiltonians}), where one uses the definitions of the Dirac operators in terms of the wedge Clifford algebra.
Equation \eqref{equation_operator_identities_1} follows from equation \eqref{equation_d_bar_spin_c_correspondence_1}, and equation \eqref{equation_operator_identities_2} follows from 
\begin{equation}
\bar{\partial}_{\varepsilon}=\bar{\partial}+\frac{1}{2 \sqrt{2}} \sum_{i=1}^{2 n} cl\left(e_{i}-\sqrt{-1} J e_{i}\right) (-\nabla^E_{e_i}h), \quad \bar{\partial}_{\varepsilon}^{*}=\bar{\partial}^{*}-\frac{1}{2 \sqrt{2}} \sum_{i=1}^{2 n} cl\left(e_{i}+\sqrt{-1} J e_{i}\right) (-\nabla^E_{e_i}h),
\end{equation}
and the fact that and that $J \operatorname{grad} h=V$.
To show equation \eqref{equation_operator_identities_3}, first note that
\begin{equation}
\label{Baila_gahapang_1}
\left(D+\sqrt{-1} \varepsilon cl(V)\right)^{2}=\left(D\right)^{2}+\sqrt{-1} \varepsilon \left\{D, cl(V)\right\}+ \varepsilon^2|V|^{2}
\end{equation}
and that
\begin{equation}
\label{Baila_gahapang_2}
\left\{D, cl(V)\right\}=\sum_{i=1}^{2 n}\left(\left\{cl\left(e_{i}\right), cl(V)\right\} \nabla^E_{e_{i}}+cl\left(e_{i}\right) cl\left(\nabla^E_{e_{i}} V\right)\right)=-2 \nabla^E_{V}+\sum_{i=1}^{2 n} cl\left(e_{i}\right) cl\left(\nabla^E_{e_{i}} V\right) 
\end{equation}
from which the identity in expression \eqref{equation_operator_identities_4} can be obtained by arguments given in the proof of Lemma 3.1 of \cite{wu1998equivariant} on $X^{reg}$, where we note that the multiplier operator notated by $r_V$ in that article is given by $\overline{L_V}-L_V$. 

A more detailed coordinate computation of this identity for smooth manifolds is given in Proposition 2.6 \cite{mathai1997equivariant} (see also equation 2.23 of that article) in different notation, and avoiding the Clifford algebra. This identity extends from $X^{reg}$ to $X$ since the individual terms extend.
\end{proof}

Understanding the \textit{Bochner type} identity given in equation \ref{equation_operator_identities_3} is crucial for localization. The biggest difference between the de Rham case that we studied in \cite{jayasinghe2023l2} vs the Dolbeault complex is that there are first order terms in $D^2_{\varepsilon}-D^2$ for $D=\sqrt{2} (\overline{\partial}+\overline{\partial}^*)$ as witnessed by the Bochner-type identity above.
In the introduction of \cite{mathai1997equivariant},
it is shown that if we denote the Witten deformed Laplacian for the de Rham Laplacian as $\Delta_{\varepsilon,dR}$, then
\begin{equation}
\label{equation_de_Rham_Dolbeault_difference}
    D^2_\varepsilon= \frac{1}{2}\Delta_{\varepsilon,dR} - \sqrt{-1} \varepsilon L_V
\end{equation}
in our notation.
In fact this observation was made in equation (15) of \cite{witten1984holomorphic}. 
We observe that $|V|_{^wTX}^2=|dh|_{^wT^*X}^2$ by the Hamiltonian condition, similar to the smooth case. We refer to the introduction and section 2 of \cite{mathai1997equivariant} for detailed explanations of these facts in the smooth setting. The same formal computations go through in our singular setting as well.

The key observation is that when restricted to the equivariant Hilbert spaces $H_{\mu}$ for fixed eigenvalues $\mu$ of $\sqrt{-1}L_V$, the first order operator $\sqrt{-1}L_V$ is a bounded operator. 
We can use this to prove the proposition below for equivariant subcomplexes given by Definition \ref{Definition_equivariant_Hilbert_subcomplex}.

\begin{proposition}%[Bochner type formula]
\label{Proposition_stratified_Morse_Laplace_structure}
Let $X$ be the resolution of a stratified pseudomanifold of dimension $2n$ with a K\"ahler wedge metric and a stratified K\"ahler Hamiltonian Morse function $h$ corresponding to an isometric $S^1$ action generated infinitesimally by a wedge vector field $V$, where the metric is locally conformally totally geodesic at fundamental neighbourhoods of critical points of $h$. Let $E$ be a Hermitian vector bundle on $X$, to which the action lifts, yielding a geometric endomorphism $T^{\mathcal{P}_{\alpha}(X)}_{\theta}=T_{\theta}$ on the Dolbeault complex $\mathcal{P}_{\alpha,\varepsilon}(X)$ introduced above, which we assume is 
Fredholm, and where local complexes induced at isolated critical points are transversally Fredholm. 
For any $\varepsilon \in \mathbb{R}$, given $s \in \mathcal{D}^{\mu}_{\alpha}(D_{\varepsilon}^2)$, we have
\begin{equation}
\label{Zhangs_Bochner_formula_1}
    ||D_\varepsilon s||^2_{L^2(X;F)}=\langle [D^2+\varepsilon K  +\varepsilon^2 |dh|^2] s,s \rangle_{L^2(X;F)}.
\end{equation}
where $F=\Lambda^*(\prescript{w}{}{(T^*X)^{0,1}} \otimes E)$ and $K$ is the bounded operator on $H_{\mu}$ given by \eqref{equation_operator_identities_4}.
\end{proposition}

\begin{proof}
We begin by expanding the left hand side of equation \eqref{Zhangs_Bochner_formula_1} to get 
\begin{equation}
\begin{split}
||D_\varepsilon s||_{L^2(X;F)}^2 & = \langle D_\varepsilon s, D_\varepsilon s \rangle_{L^2(X;F)} \\
 & = \langle Ds, Ds \rangle_{L^2(X;F)} + 2\varepsilon \langle Ds, \sqrt{-1}cl(V)s \rangle_{L^2(X;F)} + \varepsilon^2 \langle \sqrt{-1}cl(V)s, \sqrt{-1}cl(V)s \rangle_{L^2(X;F)}\\
& = \langle [D^2+\varepsilon^2 |V|^2]s, s \rangle_{L^2(X;F)}  + \varepsilon [\langle Ds, \sqrt{-1}cl(V)s \rangle_{L^2(X;F)} + \langle \sqrt{-1}cl(V)s, Ds \rangle_{L^2(X;F)} ]\\
& + \int_{\partial X} \langle i cl(d\rho_X)s,D s \rangle_F dvol_{\partial X}
\end{split}
\end{equation}
where we have used the Green-Stokes formula for $D$ and that $cl(d\rho_X)=\sigma_1(D)(d\rho_X)$ to get the boundary term.
We have that 
\begin{multline}
\label{equation_with_boundary_term_Dirac_deformed}
    \langle Ds, \sqrt{-1}cl(V)s \rangle_{L^2(X;F)}+ \langle \sqrt{-1}cl(V)s, Ds \rangle_{L^2(X;F)} \\
    =\langle s, [D \sqrt{-1}cl(V) + \sqrt{-1}cl(V) D]s \rangle_{L^2(X;F)}+ \int_{\partial X} \langle i cl(d\rho_X)s,\sqrt{-1}cl(V)s \rangle_F dvol_{\partial X}
\end{multline}
and thus the boundary contribution on $\partial X$ is 
\begin{equation}
    \int_{\partial X} \langle i cl(d\rho_X)s, (D + \varepsilon \sqrt{-1}cl(V))s \rangle_F dvol_{\partial X}=    \int_{\partial X} \langle i \sigma_1(D_\varepsilon)(d\rho_X) s, s' \rangle_F dvol_{\partial X}=0
\end{equation}
since $s'=D_\varepsilon s \in \mathcal{D}_\alpha(D)$ for $s \in \mathcal{D}_\alpha(D_\varepsilon^2)$ and the boundary integral vanishes any sections $s,s'$ in any self adjoint domain $\mathcal{D}(D_\varepsilon)$.
%When we restrict to fundamental neighbourhoods $U_a$ of fixed points, the complexes have domains obtained by restriction as introduced in Subsection \ref{subsubsubsection_Neumann_boundary_condition}.

We see that $K:=[D \sqrt{-1}cl(V) + \sqrt{-1}cl(V) D]=\sqrt{-1} \left\{D,  cl(V)\right\}$ is the operator given by \eqref{equation_operator_identities_4}, where all the terms on the right hand side of that expression are bounded operators on on $H_{\mu}$ for any fixed $\mu$. This proves the theorem.
\end{proof}

\begin{remark}
\label{Remark_simplifies_essentially_self_adjoint_1}
Proposition 6.22 of \cite{jayasinghe2023l2} can be considered to be an analog of this proposition in the de Rham case, and in Remark 6.23 of that article we discuss how in the non-essentially self-adjoint case there can be complicated in general. In the setting of this article the above result shows that there are no such complications. 
\end{remark}

\begin{proposition}
\label{Propostion_growth_estimate_witten_deformed}
In the same setting as Proposition \ref{Proposition_stratified_Morse_Laplace_structure}, there exist constants $C>0$, $\varepsilon_0>0$ such that for any section $s \in \mathcal{D}_{\alpha}^{\mu}(D_\varepsilon)$ with $\text{supp}(s) \subset (X \setminus \cup_{a \in crit(h)} U_{a})$ and $\varepsilon \geq \varepsilon_0$, one has 
\begin{equation}
    ||D_{\varepsilon}s||_{L^2(X;F)} \geq C \sqrt{\varepsilon}||s||_{L^2(X;F)}.
\end{equation}
\end{proposition}

\begin{proof}
Let $C_1$ be the minimum value of $||dh||$ on $X \setminus \cup_{a \in crit(h)} U_{a}$.
Using the Bochner type formula \eqref{Zhangs_Bochner_formula_1},  it is easy to see that there exists a finite constant $K_1$ such that, for all $s \in \mathcal{D}_{\alpha}^{\mu}(D_\varepsilon^2)$,
\begin{equation}
||D_{\varepsilon} s||_{L^2(X;F)}^2  = \langle D_{\varepsilon} s, D_{\varepsilon} s \rangle_{L^2(X;F)} \geq (\varepsilon^2 C_1^2-\varepsilon |K_1|)||s||^2_{L^2(X;F)}.
\end{equation}
For any $C>0$ we may, by taking $\varepsilon$ sufficiently large, bound the right hand side by $\varepsilon C^2||s||^2_{L^2(X;F)}$. Finally as the inequality holds for all $s \in \mathcal{D}^{\mu}_{\alpha}(D_\varepsilon^2)$ it holds by density on the closure of $\mathcal{D}^{\mu}_{\alpha}(D_\varepsilon^2)$ in the graph norm of $\mathcal{D}^{\mu}_{\alpha}(D_\varepsilon)$.

\end{proof}

The harmonic sections of the model operator on the tangent cone of a fixed point can be understood using computations with separation of variables. We studied this for the undeformed complex with boundary conditions in \cite[\S 5.2.4]{jayasinghe2023l2}, and explained how a similar computation works for the Witten deformed Laplace type operator in the de Rham case in Proposition 6.26 of that article, which is a Fredholm complex. 
We prove the analog for the Dolbeault complexes we study in Proposition \ref{Proposition_model_spectral_gap_modified_general} below. 
We begin with a definition to capture the settings for which we need these results.

\begin{definition}
\label{definition_tangent_cone_truncations}
Given the infinite tangent cone 
$Z^+= [0,\infty) \times Z$
of a critical point $a$ of a stratified K\"ahler Hamiltonian Morse function $h$ on $X$, we can find a product decomposition $Z^+=V_s \times V_u$, and we can extend the Morse function from the truncated tangent cone $U_a$ to this space as $r_s^2-r_u^2$ using the stable and unstable radii introduced in Definition \ref{definition_stable_unstable_radii}.
For any $t >0$, we define $U^{t}_{a,s}:=\{r_s \leq t\} \subseteq V_s$, $U^t_{a,u}:=\{r_u \leq t\} \subseteq V_u$, and $U^t_{a}:=U^{t}_{a,s} \times U^{t}_{a,u}$.
In particular $U_a=U^1_a$. Given a complex $\mathcal{P}_{\alpha}(X)$, we define the complex $\mathcal{P}_{\alpha,B}(U^t_{a})$ similarly to how we defined it for the case where $t=1$, and all the constructions we did for $t=1$ can be replicated for general $t$.
We denote the infinite tangent cone $U^{\infty}_{a}$.
\end{definition}

This is an analog of Definition 6.25 of \cite{jayasinghe2023l2}, where we studied the de Rham complex. There we did not care about weights in the notion of stratified Morse functions since the metric could be perturbed (see Remark 6.21 of \cite{jayasinghe2023l2}).

We observe that while the local complexes $\mathcal{P}_{\alpha,B,\varepsilon}(U^t_{a})$ are only defined for $t \in [0,\infty)$ for $\varepsilon \geq 0$, when we restrict to $\varepsilon>0$ they are defined for $t=\infty$ as well, and the cohomology groups of all these complexes are isomorphic as discussed above. In the case of $t=\infty$, the key is that factors of $e^{-\varepsilon (r^2_n)}$ appear in the local cohomology (where $r^2_n=r_s^2+r_u^2$, the normalized radius in Definition \ref{definition_stable_unstable_radii}), giving sufficient decay at infinity for the harmonic forms to be $L^2$ bounded on the infinite tangent cone.

\begin{proposition}[Model spectral gap]
\label{Proposition_model_spectral_gap_modified_general}
In the setting of this subsection, consider the complex $\mathcal{P}^{\mu}_{\alpha,B,\varepsilon}(U^{t\sqrt{\varepsilon}}_{a})$ for some fixed $t \in [0,\infty]$, where we assume the complex is transversally Fredholm for $\varepsilon=0$. Let $\Delta_{a,\varepsilon}=(D^a_{\varepsilon})^2$ be the Laplace-type operator on $U_a$ for $\varepsilon>0$. 
If the positive eigenvalues of the Laplace type operator can be written as $\{\lambda^2_i\}_{i \in \mathbb{N}}$ for $\Delta_{a,1}$, then the positive eigenvalues for $\Delta_{a,\varepsilon}$ are $\{\varepsilon \lambda^2_i\}_{i \in \mathbb{N}}$. 
\end{proposition}

In the case of $t=\infty$, we note that $\mathcal{P}^{\mu}_{\alpha,B,\varepsilon}(U^{t\sqrt{\varepsilon}}_{a})=\mathcal{P}^{\mu}_{\alpha,B,\varepsilon}(U^{t}_{a})$. 

\begin{proof}
This statement follows similarly to the case of the de Rham case that we proved in Proposition 6.26 of \cite{jayasinghe2023l2}, after restricting to the equivariant Hilbert complexes, since the Laplace type operators are related by equation \eqref{equation_de_Rham_Dolbeault_difference}.
The spectral theory for $\varepsilon=0$, $t < \infty$ is covered in Proposition 5.33 of \cite{jayasinghe2023l2}, without restricting to equivariant sub complexes. 
\end{proof}

\section{Main technical results}
\label{Section_proof_of_main_result}

In the first subsection we study how the Witten deformed Dolbeault-Dirac operators on fundamental neighbourhoods of critical points of K\"ahler Hamiltonian Morse functions where the wedge metric is locally conformally totally geodesic, is \textit{well approximated} by the Dirac operator on the tangent cone at the critical point as $\varepsilon$ goes to $\infty$. This is used to prove key estimates for the global Dirac operator as $\varepsilon$ limits to $\infty$, and construct the holomorphic Witten instanton complex in the next subsection, the main result of this article.
The Morse inequalities are drawn as corollaries of this in the third subsection.
\textbf{While we study wedge metrics that are locally conformally totally geodesic, we will prove some of the technical estimates for locally conformally asymptotically $\delta_1$ wedge metrics which are asymptotically $\delta$ wedge K\"ahler metrics on spaces $X$,  for some $\delta_1 \geq 1$ and $\delta>0$ tracking some of the effects of these chosen constants.}

\subsection{Polynomial expansion of operators}

Following Definition \ref{definition_tangent_cone_truncations}, let $U^t_a=[0,t)_x \times Z$ be a fundamental neighbourhood of a critical point $a$ of $h$. We denote $U_a:=U^t_{a}$ for $t=1$. Freezing the coefficients of the metric tensor of the link $Z$ at the singularity, we can restrict the metric induced on the tangent cone $\mathfrak{T}_aX$ to the cone $U_a$, equipping it with a product type wedge metric $g_{a,pt}$, and it is clear that this can be identified with the truncated tangent cone at $a$. 
We study two spin$^\mathbb{C}$ Dirac operators on $U_a$, the first being \textbf{$D^X$ restricted to $U_a$ which we will continue to denote as $D^X$}. The second is the \textbf{model operator $D^{a}$}, which is the composition of the connection and the Clifford action for $U_a$ with the product type metric.
Due to our assumptions on the metrics, we have that 
\begin{equation}
\label{equation_operator_model_error}
    D^X=D^a+(x)^{\delta}W
\end{equation}
for asymptotically $\delta$ K\"ahler wedge metrics where $W$ is a first order wedge differential operator, as discussed in Subsection \ref{subsection_spin_c_Dirac}. However our local conditions imply that up to certain identifications of the complexes, we can write $D^X=D^a+(x)^{\delta_1}W$ for $W$ a first order wedge differential operator where $\delta_1 \geq 1$ (as opposed to $\delta>0$).

The product type wedge metric gives rise to the volume form $dvol_{a,pt}$ on $U_a$, and we have the associated space of $L^2$ bounded sections $L^2_a:=L^2(U_{a};F)$, 
the sections of which can be identified with those in the space $L^2_X(U_a;F)$ defined for the restriction of the K\"ahler metric $g_w$ on $X$ to $U_a$ since they are quasi-isometric.
We \textbf{define} the stratified smooth function $k(x,z)$ on $U_a$ by
\begin{equation}
\label{equation_defining_k}
    dvol_X=k(x,z)dvol_{pt}
\end{equation}
which also relates the inner products $\langle \cdot, \cdot \rangle_{L^2_{a}}$ and $\langle \cdot, \cdot \rangle_{L^2_X(U_a;F)}$. 
\begin{remark}
    In the smooth setting $k$ is the Jacobian corresponding to the determinant of the exponential map, properties of which appear in many technical computations of the proof in the smooth setting (see for instance the proof of Proposition 3.2 \cite{mathai1997equivariant}). 
    This factor is important in computations that relate the K\"ahler geometry of the local complex to the global complex, and the Hamiltonian vector field on the truncated tangent cone to that of the fundamental neighbourhood. This continues to be the case in the singular setting, since we have an isometric embedding of $L^2(U_{a};F)$ into $L^2_X(U_a;F)$, conjugating the group actions as well as the Dirac operators as we shall see below.
\end{remark}

Our assumptions on the asymptotics of the metric in subsection \ref{subsection_LCTGMetrics}
show that $k(0,z)=1$, and that $C_1 \leq k \leq C_2$ for some positive constants $C_1, C_2$. 
While $D^{a}$ is formally self-adjoint with respect to the $\langle \cdot, \cdot \rangle_{L^2_{a}}$ inner product on $U_a$, $D^X$ in general is not. However we observe that $k^{1/2} D^X k^{-1/2}$ is formally self adjoint with respect to $L^2_{a}$. 

For a metric $g_w$ on a fundamental neighbourhood that is locally asymptotically $\delta_1=1/2$ wedge we can write $g_w=g_{w,pt}+s$ where 
\begin{equation}
\label{equation_structure of the metric_middle}
    s \in x^{2\delta_1} \mathcal{C}^{\infty} (U_a; S^2(^{w}T^*X)))
\end{equation}
restricted to $U_a$ similar to equation \eqref{equation_structure of the metric} for the global metric.
We can write the volume forms in local coordinates as $\sqrt{det(g_w)}=dVol_x$ and $\sqrt{det(g_{w,pt})}=dVol_{pt}$. Since $(g_w)^{-1}(g_{w,pt}+s)=Id+(g_w)^{-1}(s)$ we can expand the formula for the determinant of $Id+(g_w)^{-1}(s)$ and take the square root to see that $k=1+\mathcal{O}(x^{\delta})$.

We consider the operators obtained by freezing coefficients of the operators $D^X$ and $k^{1/2} D^X k^{-1/2}$ at $\beta^{-1}(a)$ (where $\beta$ is the blow down map from $U_a$ to $\widehat{U_a}$).
The Leibniz rule yields
\begin{equation}
\label{equation_first_in_operator_expansion_prescale}
\begin{aligned}
    k^{1/2} D^X k^{-1/2}=D^X -\frac{1}{2 k}  [D^X, k]  
\end{aligned}
\end{equation}
and since $D^X$ has terms with $\nabla^F_{\partial_x}$, the second term is a zeroth order operator of order $\mathcal{O}(x^{\delta_1-1})$. This shows that the operator obtained by freezing coefficients at $\beta^{-1}(a)$ coincide for $\delta_1>1$ with $D^a$ (for both $D^X$ and $k^{1/2} D^X k^{-1/2}$), and coincide up to a bounded zero-th order operator for $\delta_1=1$.

We need to compare certain deformed operators as $\varepsilon$ varies. 
To that end we introduce the following notation.
For $\varepsilon \geqslant 1$, let $\mathrm{Q}_{\varepsilon}$ be an operator
\begin{equation}
\label{equation_mapping_error_estimate_2}
\mathrm{Q}_{\varepsilon}=\sum_{j=1}^{2n} b_{j}(\varepsilon, x, z)  {\nabla}^F_{e_{j}}+c(\varepsilon, x, z)
\end{equation}
where $b_{j}(\varepsilon, x, z), c(\varepsilon, x, z)$ are endomorphisms of $F$ which depend continuously on $(x, z)$, and the $e_j$ vectors form an orthonormal frame for the wedge tangent bundle. Assume that 
\begin{equation}
\label{equation_mapping_error_estimate_1}
    \mathcal{D}_{\alpha}(\mathrm{Q}_{\varepsilon}) \subseteq \mathcal{D}_{\alpha}.
\end{equation}
Assume there exist constants $C>0$ and $m_1>0, m_2 \geq 0$ such that for any $\varepsilon \geq 1$, $(x,z) \in U^{t\sqrt{\varepsilon}}_a$ for $x>0$ on $U^{t\varepsilon}_a$,
\begin{equation}
\begin{aligned}
& |b_{j}(\varepsilon, x, z)| \leq C|x|^{m_1} ; 1 \leq j \leq \dim(X), \\
& |c(\varepsilon, x, z)| \leq C|x|^{m_2} 
\end{aligned}
\end{equation}
We will then use the notation
\begin{equation}
\label{equation_mapping_error_estimate_3}
\mathrm{Q}_{\varepsilon}=\mathcal{O} (|x|^{m_1} \partial+ |x|^{m_2})
\end{equation}
following the notation introduced in equation (8.55) of \cite{bismut1991complex}.
We define the following scaling operator.
\begin{definition}[Scaling operator]
    Let $\varepsilon>0$. If $u \in L^2(U^{t}_a;F)$, let $S_{\varepsilon}u \in L^2(U^{t\sqrt{\varepsilon}}_a;F)$ be given by
    \begin{equation}
        S_{\varepsilon}u(x,z):=u(x/{\sqrt{\varepsilon}},z)
    \end{equation}
    where $(x,z) \in U^{t\sqrt{\varepsilon}}_{a}$ which we identify with a neighbourhood of the resolved tangent cone $\mathfrak{T}_aX$, and observe that $F$ can be trivialized along rays with fixed values $z$ to define the rescaling.
\end{definition}

\begin{lemma}
\label{Lemma_operator_estimates}
In the setting of the discussion above for an asymptotically $\delta$ wedge metric on $X$ that restricts to a locally conformally asymptotically $\delta_1$ wedge K\"ahler metric on a fundamental neighbourhood $U_a$ with $\delta_1 \geq 1$, we have that as  $\varepsilon \rightarrow \infty$
\begin{equation}
\label{equation_operator_estimates_1}
    S_\varepsilon k^{1/2} D^X k^{-1/2}S_{\varepsilon}^{-1} = \sqrt{\varepsilon} D^a + \varepsilon^{(1-\delta_1)/2} Err'
\end{equation}
where $Err'=\mathcal{O}(|x|^{\delta_1} \partial+|x|^{\delta_1-1})$. 
Moreover, we have that for the Hamiltonian vector field $V$, in the notation of Definition \ref{Definition_Kahler_Morse_function}
\begin{equation}
\label{equation_operator_estimates_2}
    S_\varepsilon k^{1/2} \varepsilon cl(V) k^{-1/2}S_{\varepsilon}^{-1} = \varepsilon^{1/2} cl(\widetilde{V}) + \varepsilon^{(1 - \delta)/2}\mathcal{O}(|x|^{1+\delta}).
\end{equation}
Then there exists $Err=\mathcal{O}(|x|^{\delta_1} \partial + |x|^{\delta_1-1} + |x|^{1+\delta})$ and $f=(1-\delta)/2$ such that
\begin{equation}
\label{equation_operator_estimates_3}
    S_\varepsilon k^{1/2} D_{\varepsilon}^X k^{-1/2}S_{\varepsilon}^{-1} =\sqrt{\varepsilon} D^a_{1} + \varepsilon^{f}Err
\end{equation}
where $\sqrt{\varepsilon} D^a_{1}= D^a_{\varepsilon}$.
\end{lemma}

\begin{proof}
Throughout the proof we will use the facts that the conjugation by scaling sends the multiplier operator $x$ to $\varepsilon^{-1/2}x$, and the operator $\nabla^F_W$ acting on smooth sections of $F$ on $U_a$ to $\varepsilon^{1/2} \nabla^F_{W}$ for $W$ for a frame for $^wT^*X$ on $U_a$, chosen in the neighbourhood similar to the frame used in \eqref{temp_lah_di_dah}.
In particular this shows that $\sqrt{\varepsilon} D^a_{1}= D^a_{\varepsilon}=S_{\varepsilon} D^a S^{-1}_{\varepsilon}$ on smooth sections, which also holds for sections in $L^2(U_a;F)$ by Proposition \ref{Proposition_model_spectral_gap_modified_general}.

We first observe that if the metric is locally product type, then it is isometric to the resolved truncated tangent cone and the operators $D^X$ and $D^a$ can be identified and $Err'=0$ for all $\varepsilon$.
If the metric is locally conformally product type (see Subsection \ref{definition_locally_conformally_pt_wedge}), then the operators $D^X$ and $D^a$ on $U_a$, the local complexes and their eigensections can be identified, and thus again $Err'=0$ and we get \eqref{equation_operator_estimates_1}.

If the metric is locally conformally asymptotically $\delta_1$ for $\delta_1 \geq 1$, 
we can show that $Err'=\mathcal{O}(|x|^{\delta_1} \partial + |x|^{\delta_1-1})$ as follows.
Conjugating the left hand side of equation \eqref{equation_first_in_operator_expansion_prescale} by the scaling operator yields
\begin{equation}
\label{equation_first_in_operator_expansion}
\begin{aligned}
    S_\varepsilon k^{1/2} D^X k^{-1/2}S_{\varepsilon}^{-1}u(x,z)=S_\varepsilon ((D^X u)(x{\sqrt{\varepsilon}},z))-\frac{1}{2 k(x/\sqrt{\varepsilon},z)} \Big( [D^X, k] \Big) \Big( x/{\sqrt{\varepsilon}},z \Big) u(x,z)
\end{aligned}
\end{equation}
and the discussion below equation \eqref{equation_first_in_operator_expansion_prescale} shows that the second term is of order $\varepsilon^{(1-\delta_1)/2}\mathcal{O}(x^{\delta_1-1})$. 
Moreover a similar argument shows that $k^{1/2}(x^{\delta_1} W)k^{-1/2}=\mathcal{O}(x^{\delta_1}\partial +x^{2\delta_1-1})$ which after conjugation by $S_\varepsilon$ is of order $\varepsilon^{(1-2\delta_1)/2}\mathcal{O}(x^{\delta_1-1})$. 
Recall that the truncated tangent cone is simply the fundamental neighbourhood $U_a$ with the product-type metric obtained by freezing coefficients on the link $\beta^{-1}(a)$, and that we can write $D^X=D^a+x^{\delta}W$ where $D^a$ is the model operator on the tangent cone and $W$ a first order wedge operator, where $D^a$ and $D^X$ are both operators on the space of smooth sections of $U_a$ as a resolution of a Thom-Mather stratified pseudomanifold with boundary. Thus the first term on the right hand side of \eqref{equation_first_in_operator_expansion} is $S_{\varepsilon} D^a S^{-1}_{\varepsilon}$ up to an operator in $\varepsilon^{(1-2\delta_1)/2}\mathcal{O}(x^{\delta_1-1})$.

The definition of the Hamiltonian vector field on the tangent cone (see Definition \ref{Definition_Kahler_Morse_function}) shows that $V-\widetilde{V}=x^{1+\delta}W_1$ for a smooth wedge vector field $W_1$. The operator $cl(V)= cl(\widetilde{V}) + x^{1+ \delta} cl(W_1)$ is a zeroth order operator and since $\widetilde{V}$ vanishes to first order in $x$, equation \eqref{equation_operator_estimates_2} follows. 
Equation \eqref{equation_operator_estimates_3} follows from the preceding statements and equation \eqref{equation_operator_identities_2}. 
\end{proof}

\begin{remark}
\label{Remark_general_special_considerations_poly_exp}
For locally conformally asymptotically $\delta_1$ wedge metrics that are asymptotically $\delta$ wedge metrics, the observations in Remark \ref{Remark_special_conformal_functions} 
can be used to show that if the conformal factor is radial, that we can take $Err=\mathcal{O}(|x|^{\delta_1} \partial + |x|^{\delta_1-1} + |x|^{1+\delta_1})$ in \eqref{equation_operator_estimates_3}, and we can take $f=(1-\delta_1)/2$ %, 
satisfying $f \leq 0$ for $\delta_1 \geq 0$.

We observe that since $D^X=D^a+x^{\delta}W$ for a first order wedge differential operator $W$, the difference between $k^{1/2}D^Xk^{-1/2}$ and $k^{1/2}D^ak^{-1/2}$ is given by $k^{1/2}(x^{\delta} W)k^{-1/2}=\mathcal{O}(x^{2\delta-1})$. This suggests that using the null space of $k^{1/2}D^ak^{-1/2}$ after freezing coefficients, as opposed to that of $D^a$ could be used as better candidates for the local cohomology of the deformed complex.
While we do not investigate these approaches in this initial article, these considerations were part of the motivation for developing the estimation scheme in this section that culminates with the construction of the holomorphic Witten instanton complex.    
\end{remark}

\begin{remark}
\label{Remark_operator_estimates_smooth}
In the smooth setting the analog of this Proposition follows from Theorem 8.18 in \cite{bismut1991complex}. There the result is proven for non-isolated fixed point sets of group actions. The K\"ahler property of a smooth metric on a smooth manifold can be used to show metrics are asymptotically $\delta=1$ wedge metrics as opposed to $\delta=1/2$ which is the case for general smooth metrics, and having Taylor expansions of the metric in coordinate charts where power series expansions are in integers is crucially used in the proof of this result (see Proposition 3.14 of \cite{voisin2002hodge}), no longer the case for general wedge metrics as seen in subsection \ref{subsection_LCTGMetrics} for the cusp curve.

Theorem 8.18 in \cite{bismut1991complex} is more technical in a different way since one has to keep track of the tangential and normal directions of the fixed point sets and scale the operators appropriately. The authors of \cite{wu1998equivariant} note in the proof of Proposition 3.3 of that article, that the proof simplifies considerably when the fixed point sets are zeros of Killing vector fields, which is certainly the case for any isolated fixed point where things are even simpler. Hence our result can be viewed as a generalization of the localization result in \cite{bismut1991complex} for isolated fixed points on stratified spaces with asymptotically $\delta$ wedge metrics.

\end{remark}

\subsection{The holomorphic Witten instanton complex}

In this section we prove estimates and results which hold for complexes with algebraic domains of exponent $\alpha$ for any fixed $\alpha \in [0,1]$, and thus will not indicate the $\alpha$ explicitly in the notation for the complexes.
We define $\mathcal{P}^{\mu}_{B}(U_{a}^{t_0})=(H_{\mu},\mathcal{D}^{\mu}_B(P),P)$ where $H_{\mu}=L^2_{\mu}\Omega^{0,q}(U_{a}^{t_0};E)$ with the product type metric (see Remark \ref{Remark_convention_product_type_metrics_only_for_local_complexes}).

Given $t_0 \in (0, \infty]$, choose $\chi_{a,t_0}: \mathbb{R}_s \rightarrow[0,t_0]$ to be a function in $C^{\infty}(\mathbb{R})$ such that $\chi_{a,t_0}(s)=1$ when $s<t_0/2$, and $\chi_{a,t_0}(s)=0$ when $s>3t_0/4$. At each critical point $a$, we define $t:=r_n^2=\sum |\gamma_j| r_j^2$ where the functions $r_j$ are the functions in Definition \ref{definition_stratified_Morse_function} corresponding to each critical point $a$. We observe that the functions $\chi_{a,t_0}(t)$ are invariant with respect to the operator $\sqrt{-1}L_{\widetilde{V}}$ on each $U_a$, since the functions $t$ are.
Given a harmonic form $\omega_{a} \in \mathcal{H}(\mathcal{P}^{\mu}_{B}(U_{a}^{t_0}))$, we define $\omega_{a,\varepsilon}:=\omega_a e^{-t \varepsilon} \in \mathcal{H}(\mathcal{P}^{\mu}_{B,\varepsilon}(U_{a}^{t_0}))$ which are in the null space of $D^a_\varepsilon$ (see Subsection \ref{subsection_deformed_Hilbert_complexes}). We define 
\begin{equation}
\label{equation_modify_forms_cutoff}
    \alpha_{a, t_0, \varepsilon} :=\left\|\chi_{a,t_0}(t) \omega_{a,\varepsilon}\right\|_{L^{2}(U_{a,z})}, \hspace{5mm} \eta_{a, t_0, \varepsilon}:=\frac{\chi_{a,t_0}(t) \omega_{a,\varepsilon}}{\alpha_{a, t_0, \varepsilon}},
\end{equation}
and
\begin{equation}
\label{definition_perturbed_basis}
    \mathcal{W'}(\mathcal{P}^{\mu}_{B,\varepsilon}(U_{a}^{t_0})):=\Bigg\{ \eta_{a, t_0, \varepsilon}=\frac{\chi_{a,t_0}(t) \omega_{a,\varepsilon}}{\alpha_{a, t_0, \varepsilon}} : \omega_{a,\varepsilon} \in \mathcal{H}(\mathcal{P}^{\mu}_{B,\varepsilon}(U_{a}^{t_0})) \Bigg\}
\end{equation}
where the forms $\eta_{a, t_0, \varepsilon}$ each have unit $L^2$ norm and are supported on $U_{a}^{t_0}$. In the case where $t_0=1$ we drop the subscript $t_0=1$ and denote the  corresponding forms $\alpha_{a, \varepsilon}, \eta_{a, \varepsilon}$, and the cutoff function $\chi_a$.

\begin{remark}
    It suffices to prove most estimates in neighbourhoods $U_a^{t_0}$ for some $t_0$ that is smaller than $1/|\mu|$ for eigenvalues $\mu \neq 0$ for the operator $\sqrt{-1}L_V$ (so as to control the $\varepsilon\sqrt{-1}L_V$ term appearing in $\Delta_{\varepsilon}$). In the smooth setting this is handled in \cite{bismut1991complex} by taking smaller cutoffs (see equation (9.2) of \cite{bismut1991complex}). Here we will prove the relevant statements for $t_0=1$ instead of arbitrary finite truncated tangent cones to keep the notation manageable. The arguments are similar for the general case.
\end{remark}

We can extend the forms defined above from the fundamental neighbourhood $U_{a}$ to $X$ by $0$ away from their supports.
We have the Witten deformed Dirac type operator $D_{\varepsilon}$, whose square is the Witten deformed Laplace type operator $\Delta_{\varepsilon}$.

Let $\widetilde{G_{\varepsilon,\mu}}$ be the vector space generated by the set $\left\{\mathcal{W'}(\mathcal{P}^{\mu}_{B,\varepsilon}(U_{a})) : a \in Cr(h)\right\}$ corresponding to all the harmonic sections $\omega_{a,\varepsilon}$ as above; $\widetilde{G_{\varepsilon,\mu}}$ is a subspace of $L^2_{\mu}\Omega^{0,\cdot}(U_a;E)$ since each $\eta_{a, \varepsilon}$ has finite length and compact support. 

We define $G_{\varepsilon,\mu}=\{ k^{-1/2}s | s \in \widetilde{G_{\varepsilon,\mu}} \}$, the elements of which are elements of $L^2_{\mu}\Omega^{0,\cdot}(X;E)$ as can be seen from the definition of $k$ in \eqref{equation_defining_k}. Moreover these sections are equivariant with respect to the K\"ahler Hamiltonian vector field corresponding to the K\"ahler metric on the neighbourhood (as opposed to that on the tangent cone).

Since $G_{\varepsilon,\mu}$ is finite dimensional (in particular closed), there exists an orthogonal splitting
\begin{equation}
    L^2\Omega^{0,\cdot}_{\mu}(X;E)= G_{\varepsilon,\mu} \oplus G_{\varepsilon,\mu}^{\perp}.
\end{equation}
where $G_{\varepsilon,\mu}^{\perp}$ is the orthogonal complement of $G_{\varepsilon,\mu}$ in $L^2\Omega^{0,\cdot}_{\mu}(X;E)$. Denote by $\Pi_{\varepsilon,\mu}, \Pi_{\varepsilon,\mu}^{\perp}$ the orthogonal projection maps from $L^2\Omega^{0,\cdot}_{\mu}(X;E)$ to $G_{\varepsilon,\mu}, G_{\varepsilon,\mu}^{\perp}$ respectively.
We split the deformed Witten operator $D^X$ by the projections as follows.
\begin{equation}
\label{equation_fourfold_operators}
D_{\varepsilon, \mu, 1} =\Pi_{\varepsilon,\mu} D^X_{\varepsilon} \Pi_{\varepsilon,\mu}, \hspace{3mm}
D_{\varepsilon, \mu, 2} =\Pi_{\varepsilon,\mu} D^X_{\varepsilon} \Pi_{\varepsilon,\mu}^{\perp}, \hspace{3mm} D_{\varepsilon, \mu, 3} =\Pi_{\varepsilon,\mu}^{\perp} D^X_{\varepsilon} \Pi_{\varepsilon,\mu}, \hspace{3mm} D_{\varepsilon, \mu, 4} =\Pi_{\varepsilon,\mu}^{\perp} D^X_{\varepsilon} \Pi_{\varepsilon,\mu}^{\perp}.
\end{equation}
In the following proposition and its proof, the norms and inner products for the $L^2$ forms with the product type metrics on $U_a$ are denoted $L^2_a$ (or $L^2_a(U_a;F)$), while those for the metric on $X$ are denoted by $L^2_X$ (or $L^2_X(X;F)$).

\begin{proposition}
\label{proposition_Zhangs_Morse_inequalities_intermediate_estimates}
In the setting described above, for locally conformally totally geodesic wedge metrics that are asymptotically $\delta$ wedge metrics for some $\delta>0$, with $f=(1-\delta)/2$, $(f<1/2)$ 
we have the following estimates.
\begin{enumerate}
    \item There exist constants $\varepsilon_{0}>0, I_0>0$ such that for any $\varepsilon>\varepsilon_{0}$ and for any $s \in \mathcal{D}^{\mu}(D_{\varepsilon})$, $$||D_{\varepsilon,\mu, 1}s||_{L^2_X} \leq I_0 \varepsilon^f \|s\|_{L^2_X}.$$ 
    \item There exist constants $\varepsilon_{1}>0, I_1>0$ such that for any $s \in G_{\varepsilon,\mu}^{\perp} \cap \mathcal{D}^{\mu}(D_{\varepsilon}), s^{\prime} \in G_{\varepsilon,\mu}$, and $\varepsilon>\varepsilon_{1}$,
$$
\begin{aligned}
\left\|D_{\varepsilon,\mu, 2} s\right\|_{L^2_X} & \leq    I_1 \varepsilon^f {\|s\|_{L^2_X}}\\
\left\|D_{\varepsilon,\mu, 3} s^{\prime}\right\|_{L^2_X} & \leq  I_1 \varepsilon^f \left\|s^{\prime}\right\|_{L^2_X}
\end{aligned}
$$
    
    \item  There exist constants $\varepsilon_{2}>0, I_2>0$ and $C>0$ such that for any $s \in G_{\varepsilon,\mu}^{\perp} \cap \mathcal{D}^{\mu}(D_{\varepsilon})$ and $\varepsilon>\varepsilon_{2}$,
$$
\left\|D_{\varepsilon,\mu} s\right\|_{L^2_X} \geq I_2 \sqrt{\varepsilon}\|s\|_{L^2_X}
$$
\end{enumerate}
\end{proposition}

\begin{remark}
This is the analog of Proposition 6.29 of \cite{jayasinghe2023l2}, where we worked out the details for the case of the de Rham operator in detail. There we followed the proof strategy of Zhang in Chapters 4 and 5 of \cite{Zhanglectures} in the smooth setting, described in the outline part of the proof in \cite{jayasinghe2023l2}.
There are two new ingredients in this proof for the case of the Dolbeault complex, the restriction to equivariant Hilbert complexes and the use of the expansion of the operator in Lemma \ref{Lemma_operator_estimates}.
\end{remark}

\begin{proof}
We follow the same strategic steps as in the proof of Proposition 6.29 of \cite{jayasinghe2023l2}, even though the technicalities are slightly more involved. We pick an orthonormal basis $\widehat{W_{\varepsilon,\mu, a}}$ for the vector space generated by the forms in $G_{\varepsilon,\mu}$.

\textbf{\textit{proof of 1:}} For any $s \in L^2\Omega^{0,q}_{\mu}(X;E)=H_{\mu}$ the projection $\Pi_{\varepsilon,\mu} s$ can be written
\begin{equation}
\label{Projection_first_witten_deform}    
\Pi_{\varepsilon,\mu} s=\sum_{a \in Cr(h)} \sum_{\eta \in \widehat{W_{\varepsilon,\mu, a}}} \left\langle \eta, s\right\rangle_{L^2_X} \eta
\end{equation}
where each $\eta$ can be written as a linear combination of forms $k^{-1/2}\eta_{a,\varepsilon}$ with $\eta_{a,\varepsilon}$ as defined in equation \eqref{equation_modify_forms_cutoff}. Since we want to estimate the first of the four operators in the decomposition given in \eqref{equation_fourfold_operators}, we show that
\begin{equation}
    ||\Pi_{\varepsilon,\mu} D^X_\varepsilon k^{-1/2} \eta_{a, \varepsilon}||_{L^2_X} \leq C \varepsilon^f
\end{equation}
for some $C>0$ and $\varepsilon$ large enough using the following argument.

Since $D^a_{\varepsilon}=D^a+ \varepsilon cl(V)$, observe that for a smooth function $v$ we have 
\begin{equation}
\label{equation_Leibniz_for_Witten_deformation}
    D^a_{\varepsilon}( v \omega_{a,\varepsilon}) = D^a(v \omega_{a,\varepsilon})+ \varepsilon cl(V^a)( v \omega_{a,\varepsilon}) = cl(dv) \omega_{a,\varepsilon} + v \big ( (D^a \omega_{a,\varepsilon}) + \varepsilon cl(V^a)(\omega_{a,\varepsilon}) \big ) = cl(dv) \omega_{a,\varepsilon}
\end{equation}
since $D^a_{\varepsilon} \omega_{a,\varepsilon}=0$. Since $\text{supp}( d\chi_{a}(t)) \subseteq \{1/2 \leq t \leq 3/4\}$ (recall that $\chi_{a}=\chi_{a,t_0=1}$), 
each $D^a_{\varepsilon}\eta_{a,\varepsilon}$ is compactly supported in $\{1/2 \leq t \leq 3/4\}$. Then for $\varepsilon$ large enough
\begin{equation} 
\label{inequality_useful_for_Morse_proof_234}
\left\langle D^a_{\varepsilon} \eta_{a, \varepsilon}, \eta_{a, \varepsilon} \right\rangle_{L^2} =  \left\langle cl(d\chi_a(t)) \omega_{a, \varepsilon}/ {\alpha_{a,\varepsilon}}, \eta_{a, \varepsilon} \right\rangle_{L^2_a} \leq e^{-C_0 \varepsilon}
\end{equation}
for some large enough positive constant $C_0$ 
since $cl(d\chi_a(t)) \omega_{a, \varepsilon}/ {\alpha_{a,\varepsilon}}$ is supported away from $t \leq 1/2$ and $\omega_{a, \varepsilon}=\omega_a e^{-t\varepsilon}$.

By Lemma \ref{Lemma_operator_estimates} we have  $k^{1/2}D^X_{\varepsilon,\mu}k^{-1/2}= D^a_{\varepsilon,\mu}+ \varepsilon^f Err$ on $U_a$ where $Err$ is an operator as in the statement of that lemma. 
Since $G_{\mu,\varepsilon}$ is a finite dimensional subspace with each element in $\mathcal{D}^{\mu}(D_\varepsilon)$, we see that 
$\langle Err (\eta, \eta \rangle_{L^2_X}$
can be bounded by a constant uniform in $\varepsilon$ for large $\varepsilon$, for each $\eta \in \widehat{W_{\varepsilon,\mu, a}}$ for fixed $\mu$. Thus we have the inequality
\begin{equation} 
\label{inequality_useful_for_Morse_proof_23754}
\left\langle D^X_{\varepsilon} \eta_{a, \varepsilon}, \eta_{a, \varepsilon} \right\rangle_{L^2_X} =  \left\langle cl(d\chi_a(t)) \omega_{a, \varepsilon}/ {\alpha_{a,\varepsilon}}, \eta_{a, \varepsilon} \right\rangle_{L^2_X} \leq \widetilde{C}_0 \varepsilon^f.
\end{equation}

Since each $\eta$ has unit norm, the Cauchy Schwartz inequality shows that $|\left\langle \eta, s\right\rangle_{L^2}| \leq ||s||$.
Since the forms $\eta$ in the basis $\widehat{W_{\varepsilon,\mu,a}}$ for a given critical point $a$ are orthogonal and since the supports of the forms in $\widehat{W_{\varepsilon,\mu,a}}$ for different critical points have no intersection, using equation \eqref{Projection_first_witten_deform} we see that
\begin{equation}
    ||\Pi_{\varepsilon,\mu} D^X_{\varepsilon} \Pi_{\varepsilon,\mu}s|| \leq I_1 \varepsilon^f||s||
\end{equation}
for large enough $\varepsilon$ in order to compensate for the finite sum of terms as well as ensuring \eqref{inequality_useful_for_Morse_proof_234}. 

\textbf{\textit{proof of 2:}} 
While $D^X$ is not formally self-adjoint with respect to the $L^2_a$ inner product on each $U_a$, the operator $D^a$ is. Thus, while $D_{\varepsilon,\mu, 2}$ and $D_{\varepsilon,\mu, 3}$ are not formal adjoints, $D^a_{\varepsilon, \mu, 2} =\Pi_{\varepsilon,\mu} D^a_{\varepsilon} \Pi_{\varepsilon,\mu}^{\perp}, D^a_{\varepsilon, \mu, 3} =\Pi_{\varepsilon,\mu}^{\perp} D^a_{\varepsilon} \Pi_{\varepsilon,\mu}$ are formally adjoint. Lemma \ref{Lemma_operator_estimates} shows that 
\begin{equation}
    k^{1/2}D_{\varepsilon,\mu, 2}k^{-1/2}=D^a_{\varepsilon,\mu,2}+\varepsilon^f Err, \hspace{5mm} k^{1/2}D_{\varepsilon,\mu, 3}k^{-1/2}=D^a_{\varepsilon,\mu,3}+\varepsilon^f Err.
\end{equation}
Thus it suffices to prove the first estimate of the two, which we now do.
Since each $\eta_{a, \varepsilon}$ has support in $U_a$, so does $\eta=k^{-1/2}\eta_{a, \varepsilon}$. One deduces that for any $s \in G_{\varepsilon,\mu}^{\perp} \cap \mathcal{D}_{\mu}(D_{\varepsilon})$,
$$
\begin{aligned}
D^X_{\varepsilon,\mu 2} s & =\Pi_{\varepsilon,\mu} D^X_{\varepsilon,\mu} \Pi_{\varepsilon,\mu}^{\perp} s=\Pi_{\varepsilon,\mu} D^X_{\varepsilon,\mu} s \\
& =\sum_{a \in Cr(h)} \sum_{\eta \in \widehat{W_{\varepsilon,\mu,a}}} \left\langle\eta, D^X_{\varepsilon,\mu} s\right\rangle_{L^2_X} \eta =\sum_{a \in Cr(h)} \sum_{\eta \in \widehat{W_{\varepsilon,\mu,a}}} \left\langle D^X_{\varepsilon,\mu} \eta, s\right\rangle_{L^2_X} \eta \\
& =\sum_{a \in Cr(h)} \sum_{\eta \in \widehat{W_{\varepsilon,\mu,a}}} \eta \Bigg( \int_{U_a} \left\langle D^a_{\varepsilon,\mu} k^{1/2}\eta,  s\right\rangle_{F}  dv_{U_a}+ \varepsilon^f \langle Err(\eta),  s \rangle_{L^2_a} \Bigg).
\end{aligned}
$$

The terms $\langle Err(\eta),  s \rangle_{L^2_a}$ can be bounded as in the proof of the first numbered statement above. Since each $\eta$ can be written as a linear combination of forms $k^{-1/2}\eta_{a,\varepsilon}$ as defined in equation \eqref{definition_perturbed_basis}, the other terms can be controlled by controlling the integral over $U_a$ with integrand $\left\langle D^a_{\varepsilon,\mu} \eta_{a,\varepsilon},  s\right\rangle_{F}$. This can be expanded as
\begin{equation}
    \left\langle D^a_{\varepsilon,\mu} \frac{\chi_a(t) \omega_{a} e^{-t \varepsilon}}{{\alpha_{a, \varepsilon}}},  s\right\rangle_{L^2} =\left\langle \frac{cl(d\chi_a(t)) \omega_{a} e^{-t \varepsilon}}{{\alpha_{a, \varepsilon}}},  s\right\rangle_{L^2} 
\end{equation}
restricted to each $U_a$, where we have used the argument in \eqref{equation_Leibniz_for_Witten_deformation}. We know that $d\chi_a$ is only supported on the set $\{1/2 \leq t \leq 3/4\}$ in each $U_a$, we see that the desired inequality follows from 
arguments similar to those in the proof of Step 1.

\textbf{\textit{proof of 3:}} 

This is proven in three steps:
\begin{enumerate}
    \item  Assume $\operatorname{supp}(s) \subset \bigcup_{a \in Cr(h)} U^{1/2}_{a}$.
    \item  Assume $\operatorname{supp}(s) \subset X \backslash \bigcup_{a \in Cr(h)} U^{1/4}_{a}$. 
    \item General Case.
\end{enumerate}

\textit{Step 1:} 
Since $s \in G_{\varepsilon,\mu}^{\perp}$, and since $\chi_{a}=\chi_{a,1}$ is identically $1$ on $U_{a}^{1/2}$, we see that $\langle s, k^{-1/2}\omega_{a,\varepsilon} \rangle_{L^2_X}=0$ for forms $\omega_{a,\varepsilon} \in \mathcal{H}(\mathcal{P}^{\mu}_{B,\varepsilon}(U_a))$ at critical points $a$ of $h$. Then Proposition \ref{Proposition_model_spectral_gap_modified_general}, implies that 
\begin{equation}
    \left\|D^a_{\varepsilon} k^{1/2}s\right\|_{L^2_a}^{2} \geq {\varepsilon} \left\| k^{1/2}s\right\|_{L^2_a}^{2}
\end{equation}
since we have projected away from the $0$ eigenvalues of the model harmonic oscillator and the other eigenvalues of $\Delta_\varepsilon$ grow of order $\varepsilon$.
Then Lemma \ref{Lemma_operator_estimates} together with the fact that the norms for $L^2_a, L^2_X$ are comparable can be used to show that
\begin{equation}
    \left\|D^X_{\varepsilon} s\right\|_{L^2_X} \geq  \left\|D^a_{\varepsilon}k^{1/2}s \right\|_{L^2_X} - \varepsilon^{f} C\left\|s \right\|_{L^2_X} \geq \sqrt{\varepsilon} \left\| s\right\|_{L^2_X} - \varepsilon^{f} C\left\|s \right\|_{L^2_X}
\end{equation}
for some constant $C$. 
Since $f< 1/2$, we see that there exists large enough $\varepsilon_{1}>0, C_1>0$  such that for any $\varepsilon \geq \varepsilon_{1}$
\begin{equation}
\left\|D^X_{\varepsilon} s\right\|_{L^2_X} \geq C_{1} \varepsilon^{1/2} \|s\|_{L^2_X},
\end{equation}
proving step 1. 

\textit{Step 2:} 
Since $\operatorname{supp}(s) \subset X \backslash \bigcup_{a \in Cr(h)} U_a^{1/4}$, one can proceed as in the proof of Proposition \ref{Propostion_growth_estimate_witten_deformed} to find constants $\varepsilon_{2}>0$ and $C_{2}>0$, such that for any $\varepsilon \geq \varepsilon_{2}$,
$$
\left\|D^X_{\varepsilon} s\right\|_{L^2_X} \geq C_{2} \sqrt{\varepsilon}\|s\|_{L^2_X},
$$
proving step 2.

\textit{Step 3:} 
Let $\widetilde{\chi} \in C_{\Phi}^{\infty}(X)$ be defined such that restricted to each $U_{a}$ for critical points $a$, $\widetilde{\chi}(t)=\chi_a(t)$, and that $\left.\widetilde{\chi}\right|_{X \backslash \bigcup_{a \in Cr(h)} U_{a}}=0$.
For any  $s \in G_{\varepsilon,\mu}^{\perp} \cap \mathcal{D}^{\mu}(D_{\varepsilon})$ we see that $\widetilde{\chi} s \in G_{\varepsilon,\mu}^{\perp} \cap \mathcal{D}^{\mu}(D_{\varepsilon})$.
Then the results of steps 1 and 2 shows that there exists $C_{9}>0$ such that for any $\varepsilon \geq \varepsilon_{0}+\varepsilon_{1}+\varepsilon_{2}$,
$$
\begin{gathered}
\left\|D^X_{\varepsilon,\mu} s\right\| \geq \frac{1}{2}\left(\left\|(1-\widetilde{\chi}) D^X_{\varepsilon,\mu} s\right\|+\left\|\widetilde{\chi} D^X_{\varepsilon,\mu} s\right\|\right)
=\frac{1}{2}\left(\left\|D^X_{\varepsilon,\mu}((1-\widetilde{\chi}) s)+[D^X, \widetilde{\chi}] s\right\|+\left\|D^X_{\varepsilon,\mu}(\widetilde{\chi} s)+[\widetilde{\chi}, D^X] s\right\|\right) \\
\geq \sqrt{\varepsilon}\left(C_{2}\|(1-\widetilde{\chi}) s\|+C_1\|\widetilde{\chi} s\|_{0}\right)-C_{3}\|s\| 
\geq C_{10} \sqrt{\varepsilon}\|s\|_{0}-C_{3}\|s\|,
\end{gathered}
$$
where the norms are for $L^2_X$ for some $C_3$ and $C_{10}=\min \left\{C_1, C_{2} / 2\right\}$, 
which completes the proof of the proposition.
\end{proof}

For any $c>0$, denote by {$G_{\varepsilon,\mu, c}$} the direct sum of the eigenspaces of $D^X_{\varepsilon}$ with eigenvalues lying in $[-c, c]$ which are also eigenspaces for $\sqrt{-1}L_V$ for the eigenvalue $\mu$, which is a finite dimensional subspace of $L^2_{\mu}\Omega^{0,q}(X;E)$. Let $\Pi_{\varepsilon,\mu,c}$ be the orthogonal projection from $L^2\Omega^{0,q}(X;E)$ to $G_{\varepsilon,\mu,c}$. The following is an analog of Lemma 6.31 of \cite{jayasinghe2023l2}, itself a generalization of Lemma 5.8 of \cite{Zhanglectures}. We define $f^+$ to be a real constant that satisfies $f < f^+ < 1/2$.

\begin{lemma}
\label{Lemma_inequality_spectral_for_Witten_deformation}
There exist $C_1>0$, $\varepsilon_3>0$ such that for any $\varepsilon>\varepsilon_3$, and any $\sigma \in G_{\varepsilon,\mu}$, 
\begin{equation}
    \left\|\Pi_{\varepsilon,\mu,\kappa} \sigma-\sigma\right\| \leq {C_1}{\varepsilon}^f\|\sigma\|
\end{equation}
where $\kappa=\varepsilon^{f^+}$.
\end{lemma}

\begin{proof}
Let $\delta=\{\lambda \in \mathbf{C}:|\lambda|=\kappa \}$ be the counter-clockwise oriented circle. By Proposition \ref{proposition_Zhangs_Morse_inequalities_intermediate_estimates}, one deduces that for any $\lambda \in \delta, \varepsilon \geq \varepsilon_{0}+\varepsilon_{1}+\varepsilon_{2}$ and $s \in \mathcal{D^\mu}(D^X_\varepsilon)$, there exists positive constants $I_1,I_2, I_3$ such that
\begin{equation}   
\begin{gathered}
\left\|\left(\lambda-D^X_{\varepsilon}\right) s\right\| \geq \frac{1}{2}\left\|\lambda \Pi_{\varepsilon,\mu} s -D^X_{\varepsilon,\mu, 1}\Pi_{\varepsilon,\mu}s   -D^X_{\varepsilon,\mu, 2} \Pi_{\varepsilon,\mu} s\right\| +\frac{1}{2}\left\|\lambda \Pi_{\varepsilon,\mu}^{\perp} s-D^X_{\varepsilon,\mu, 3} \Pi_{\varepsilon,\mu}^{\perp} s-D^X_{\varepsilon,\mu, 4} \Pi_{\varepsilon,\mu}^{\perp} s\right\| \\
\geq \frac{1}{2}\left(\left(\kappa-({I_1}+{I_2})\varepsilon^f\right)\left\|\Pi_{\varepsilon,\mu} s\right\|+\left({I_3} \sqrt{\varepsilon}-\kappa-{I_2}\varepsilon^f  \right)\left\|\Pi_{\varepsilon}^{\perp} s\right\|\right) .
\end{gathered}
\end{equation}
This shows that there exist $\varepsilon_{4}>\varepsilon_{0}+\varepsilon_{1}+\varepsilon_{2}$ and $C_{2}>0$ such that for any $\varepsilon \geq \varepsilon_{4}$ and $s \in \mathcal{D^\mu}((D^X_\varepsilon)^2)$,
\begin{equation}  
\label{equation_(5.27)_Zhang}
\left\|\left(\lambda-D^X_{\varepsilon}\right) s\right\| \geq C_{2}\|s\|.
\end{equation}
Thus, for any $\varepsilon  \geq \varepsilon _{4}$ and $\lambda \in \delta$,
\begin{equation}  
\lambda-D^X_{\varepsilon }: \mathcal{D^\mu}(D^X_\varepsilon) \rightarrow L^2\Omega^{0,q}_{\mu}(X;E)
\end{equation}
is invertible and the resolvent $\left(\lambda-D^X_{\varepsilon}\right)^{-1}$ is well-defined.
By the spectral theorem 
one has
\begin{equation}
\label{equation_spectral_projector_countour_integral}
\Pi_{\varepsilon,\mu,\kappa} \sigma-\sigma=\frac{1}{2 \pi \sqrt{-1}} \int_{\delta}\left(\left(\lambda-D^X_{\varepsilon}\right)^{-1}-\lambda^{-1}\right) \sigma d \lambda.
\end{equation}
Now one verifies directly
that for any $\sigma \in G_{\varepsilon,\mu}$
\begin{equation}
\left(\left(\lambda-D^X_{\varepsilon}\right)^{-1}-\lambda^{-1}\right) \sigma=\lambda^{-1}\left(\lambda-D^X_{\varepsilon}\right)^{-1} (D^X_{\varepsilon,\mu, 1}+D^X_{\varepsilon,\mu, 3}) \sigma .
\end{equation}
From Proposition \ref{proposition_Zhangs_Morse_inequalities_intermediate_estimates} and \eqref{equation_(5.27)_Zhang}, one then deduces that for any $\varepsilon \geq \varepsilon_{4}$ and $\sigma \in G_{\varepsilon,\mu}$,
\begin{equation}
\label{equation_estimate_finale}
\left\|\left(\lambda-D^X_{\varepsilon}\right)^{-1} (D^X_{\varepsilon, \mu,  1}+D^X_{\varepsilon, \mu, 3}) \sigma\right\| \leq C_{2}^{-1}\left\|(D^X_{\varepsilon, \mu,  1}+D^X_{\varepsilon, \mu, 3}) \sigma\right\| \leq \frac{({I_1}+{I_3}) \varepsilon^f}{C_{2} }   \|\sigma\|
\end{equation}
From \eqref{equation_spectral_projector_countour_integral}-\eqref{equation_estimate_finale}, we get the estimate in the statement of the lemma, finishing the proof.
\end{proof}

In Remark 5.9 of \cite{Zhanglectures}, Zhang explains that one can work out an analog of the proof with real coefficients, whereas the proof above implicitly uses the fact that we work in the category of complex coefficients.
The following is a generalization of Proposition 6.32 of \cite{jayasinghe2023l2}, itself a generalization of Proposition 5.5 of \cite{Zhanglectures}.

\begin{proposition}
\label{Proposition_small_eig_estimate_and_dimension}
There exists $\varepsilon_0>0$ such that when $\varepsilon>\varepsilon_0$, the number of eigenvalues in $[0,\varepsilon^{f^+}]$ of $\Delta_{\varepsilon}^{(q)}$, the Laplace-type operator acting on forms of degree $q$ of the complex $\mathcal{P}^{\mu}_{\varepsilon}(X)$, is equal to 
\begin{equation}
    \sum_{a \in Cr(h)} \dim \mathcal{H}^q(\mathcal{P}^{\mu}_{B,\varepsilon}(U_{a})).
\end{equation}
\end{proposition}

\begin{proof}
By applying Lemma \ref{Lemma_inequality_spectral_for_Witten_deformation} to the elements of $k^{1/2}\eta_{a,\varepsilon} \in \left\{\mathcal{W'}(\mathcal{P}^{\mu}_{B,\varepsilon}(U_{a})) : a \in Cr(h)\right\}$, one sees easily that for any $\kappa=\varepsilon^{f^+}$ when $\varepsilon$ is large enough, the elements of the set $\{\Pi_{\varepsilon,\mu,\kappa} \eta_{a,\varepsilon}: \eta_{a,\varepsilon} \in  \cup_{a \in Cr(h)} \widehat{W}_{a,\varepsilon,\mu}\}$, where $\widehat{W}_{a,\varepsilon,\mu}$ is the basis chosen in the proof of Proposition \ref{proposition_Zhangs_Morse_inequalities_intermediate_estimates}, are linearly independent. Thus, there exists $\varepsilon_{5}>0$ such that when $\varepsilon \geq \varepsilon_{5}$,
\begin{equation}
    \dim G_{\varepsilon,\kappa} \geq \dim G_{\varepsilon}.
\end{equation}
If $\dim G_{\varepsilon,\mu,\kappa}>\dim G_{\varepsilon,\mu}$, then there should exist a nonzero element $s \in G_{\varepsilon,\mu,\kappa}$ such that $s$ is perpendicular to $\Pi_{\varepsilon,\mu,\kappa} G_{\varepsilon,\mu}$. That is, $\left\langle s,\Pi_{\varepsilon,\mu,\kappa} \eta_{a,\varepsilon} \right\rangle_{L^2(X;E)}=0$ for any $\eta_{a,\varepsilon}$ as above.
Thus, there exists a constant $C_{4}>0$ such that 
\begin{equation}
\left\|\Pi_{\varepsilon,\mu}^{\perp} s\right\| \geq C_{4}\|s\| .
\end{equation}
Using this and Proposition \ref{proposition_Zhangs_Morse_inequalities_intermediate_estimates} one sees that when $\varepsilon>0$ is large enough,
\begin{equation}
\begin{gathered}
C C_{4} \sqrt{\varepsilon}\|s\| \leq\left\|D^X_{\varepsilon} \Pi_{\varepsilon,\mu}^{\perp} s\right\|=\left\|D^X_{\varepsilon} s-D^X_{\varepsilon} \Pi_{\varepsilon,\mu} s\right\|
=\left\|D^X_{\varepsilon} s-D^X_{\varepsilon, 1} s - D^X_{\varepsilon, 3} s\right\|\\
\leq\left\|D^X_{\varepsilon} s\right\|+\left\|D^X_{\varepsilon, 1} s\right\|+\left\|D^X_{\varepsilon, 3} s\right\|
\leq\left\|D^X_{\varepsilon} s\right\|+{I\varepsilon^f}\|s\|
\end{gathered}
\end{equation}
for some constants $C, I$, from which one gets
\begin{equation}
    \left\|D^X_{\varepsilon} s\right\| \geq C C_{4} \sqrt{\varepsilon}\|s\|-I{\varepsilon}^f\|s\|.
\end{equation}
Since $f < 1/2$, when $\varepsilon>0$ is large enough this contradicts the assumption that $s$ is a nonzero element in $G_{\varepsilon,\mu,\kappa}$.
Thus, one has
\begin{equation}
    \dim G_{\varepsilon,\mu,\kappa}=\dim G_{\varepsilon,\mu}=\sum_{a \in Cr(h)} \dim \mathcal{H}^q(\mathcal{P}^{\mu}_{B,\varepsilon}(U_{a}))
\end{equation}
proving the result.
\end{proof}

The following is Theorem \ref{theorem_small_eig_complex_intro} and describes what is known as the holomorphic Witten instanton complex, or the small eigenvalue complex of the Witten deformed Laplacian, encoding information about the local cohomology at the critical points.

\begin{theorem}[holomorphic Witten instanton complex]
\label{theorem_small_eig_complex}
For any integer $0 \leq q \leq n$, let $
\mathrm{F}_{\varepsilon, \mu, q}^{[0, \kappa]} \subset L^2_{\mu}\Omega^{0,q}(X;E)$ denote the vector space generated by the eigenspaces of $\Delta_{\varepsilon,q}$ associated with eigenvalues in $[0, \kappa]$. 
For $f=(1-\delta)/2$ 
let $\kappa=\varepsilon^{f^+}$ for $f^+$ satisfying $f<f^+<1/2$. Then there exists $\varepsilon_0>0$ such that when $\varepsilon>\varepsilon_0$ $\mathrm{F}_{\varepsilon, \mu, q}^{[0, \kappa]}$ has the same dimension as 
\begin{equation}
    \sum_{a \in Cr(h)} \dim \mathcal{H}^q(\mathcal{P}^{\mu}_{B,\varepsilon}(U_{a})),
\end{equation}
and form a finite dimensional subcomplex of $\mathcal{P}^{\mu}_{\varepsilon}(X)$ :
\begin{equation}
    \label{small_eigenvalue_complex}
\left(\mathrm{F}_{\varepsilon,\mu, q}^{[0, \kappa]}, P_{\varepsilon}\right): 0 \longrightarrow \mathrm{F}_{\varepsilon,\mu, 0}^{[0, \kappa]} \stackrel{P_{\varepsilon}}{\longrightarrow} \mathrm{F}_{\varepsilon,\mu, 1}^{[0, \kappa]} \stackrel{P_{\varepsilon}}{\longrightarrow} \cdots \stackrel{P_{\varepsilon}}{\longrightarrow} \mathrm{F}_{\varepsilon,\mu, n}^{[0, \kappa]} \longrightarrow 0.
\end{equation}
\end{theorem}

\begin{proof}    
This follows from Proposition \ref{Proposition_small_eig_estimate_and_dimension} once one shows that the small eigenvalue eigensections form a complex.
Since
$$
P_{\varepsilon} \Delta_{\varepsilon}=\Delta_{\varepsilon} P_{\varepsilon} =P_{\varepsilon} P_{\varepsilon}^* P_{\varepsilon} \text{   and    }
P_{\varepsilon}^* \Delta_{\varepsilon}=\Delta_{\varepsilon} P_{\varepsilon}^* =P_{\varepsilon}^* P_{\varepsilon} P_{\varepsilon}^*
$$
one sees that $P_{\varepsilon}$ (resp. $P_{\varepsilon}^*$ ) maps each $\mathrm{F}_{\varepsilon,\mu, q}^{[0, \kappa]}$ to $\mathrm{F}_{\varepsilon,\mu, q+1}^{[0, \kappa]}$ (resp. $\mathrm{F}_{\varepsilon,\mu, q-1}^{[0, \kappa]}$ ). 
The Kodaira decomposition of $\mathcal{P}_{\varepsilon}(X)$ restricts to this finite dimensional complex $\left(\mathrm{F}_{\varepsilon,\mu, q}^{[0, \kappa]}, P_{\varepsilon}\right)$. 

\end{proof}

\subsection{Morse inequalities}

In this subsection we prove the holomorphic Morse inequalities using the instanton complex. Theorem \ref{Theorem_strong_Morse_Dolbeault_version_2_intro} is a restatement of the following.

\begin{theorem}[Strong form of the holomorphic Morse inequalities]
\label{Theorem_strong_Morse_Dolbeault_version_2}

In the same setting as in the previous subsection with $\mathcal{P}_{\alpha}(X)=(L^2\Omega^{0,\cdot}(X;E), \mathcal{D}_{\alpha}(P), P=\overline{\partial}_E)$ which we assume is Fredholm and where local complexes induced at isolated critical points are transversally Fredholm, we have 
\begin{equation}
\label{equation_blah_440}
    \Big( \sum_{a \in Crit(h)}  \sum_{q=0}^n b^q \mathrm{dim}(\mathcal{H}^{q}(\mathcal{P}^{\mu}_{\alpha,B}(U_a)) \Big) = \sum_{q=0}^n b^q \mathrm{dim}(\mathcal{H}^{q}(\mathcal{P}^{\mu}_{\alpha}(X))) + (1+b) \sum_{q=0}^{n-1} Q^{\mu}_q b^q
\end{equation}
where $Q^{\mu}_q$ are non-negative integers.
Equivalently we have a power series
\begin{equation}
\label{equation_blah_441}
    \Big( \sum_{a \in Crit(h)}  \sum_{q=0}^n b^q {Tr} (T_{s,\theta}|_{\mathcal{H}^{q}(\mathcal{P}_{\alpha,B}(U_a))}) \Big) = \sum_{q=0}^n b^q {Tr} (T_{s,\theta}|_{\mathcal{H}^{q}(\mathcal{P}_{\alpha}(X))}) + (1+b) \sum_{q=0}^{n-1} Q_q b^q 
\end{equation}
where the $Q_q$ are power series in the variable $\lambda=se^{i\theta}$ 
converging for $|\lambda|=s<1$. The coefficients of this series are non-negative integers.
\end{theorem}

\begin{proof}
We first prove the statement in \eqref{equation_blah_440}.
We apply equation \eqref{equation_with_the_b} of Theorem \ref{Lefschetz_supertrace} with the endomorphism $T=Id$
to the complex \eqref{small_eigenvalue_complex} for the fixed value of $\mu$. Since this is a finite dimensional (equivariant) Hilbert complex, we can take $t$ to $0$ 
to see that the left hand side is exactly the expression
\begin{equation}
    \Big( \sum_{a \in Crit(h)}  \sum_{q=0}^n b^q \dim(\mathcal{H}^{q}(\mathcal{P}^{\mu}_{\alpha,B,\varepsilon}(U_a)) \Big)
\end{equation}
and the right hand side is of the form
\begin{equation}
    \sum_{q=0}^n b^q \dim(\mathcal{H}^{q}(\mathcal{P}^{\mu}_{\alpha,\varepsilon}(X))) + (1+b) \sum_{q=0}^{n-1} Q_q b^q
\end{equation}
where $Q_q$ are non-negative integers. Indeed the $Q_q$ are the dimensions of the co-exact smaller eigenvalue eigensections of the deformed Laplace-type operator, as is clear from the statement of Theorem \ref{Lefschetz_supertrace}. Since the cohomology groups of the Witten deformed and the undeformed complexes have the same dimensions for both the local and global complexes we obtain equation \eqref{equation_blah_440}.

Equation \eqref{equation_blah_441} can be obtained by multiplying each term in equation \eqref{equation_blah_440} by $\mu$ where $\mu=\lambda^{r}$ for some power of $r$ and taking sums over all $\mu$ as defined in equation \eqref{equation_renormalized_trace_definition}.
The convergence of the power series for $s<1$ follows from results in \cite{jayasinghe2023l2} which we discussed in see Remark \ref{remark_convergence_of_generating_series}.
This proves the result.    
\end{proof}

Theorem \ref{Theorem_dual_inequalities_intro_version} is a restatement of the following.

\begin{theorem}[Dual equivariant holomorphic Morse inequalities]
\label{Theorem_dual_inequalities}

In the same setting as Theorem \ref{Theorem_strong_Morse_Dolbeault_version_2}, let $(\mathcal{P}_{\alpha})_{SD}(X)$ be the Serre dual complex
\begin{equation}
    (\mathcal{P}_{\alpha})_{SD}(X):=\mathcal{R}_{\alpha}(X)=(L^2\Omega^{n,n-\cdot}(X;E^*),\mathcal{D}_{\alpha}(\overline{\partial}^*_{E^* \otimes K}),\overline{\partial}^*_{E^* \otimes K})
\end{equation}
and let $T^{\mathcal{R}_{\alpha}(X)}_{\theta}$ be the geometric endomorphism induced on this complex corresponding to the K\"ahler action, and we denote it as $T_{\theta}$ with some abuse of notation.
Then we have that
\begin{equation}
    b^n \Big( \sum_{a \in Crit(h)}  \sum_{q=0}^n b^{-q} Tr(T_{s,-\theta}|_{\mathcal{H}^{q}(\mathcal{R}_{\alpha,B}(U_a))}) \Big) = b^n \sum_{q=0}^n b^{-q} Tr(T_{s,-\theta}|_{\mathcal{H}^{q}(\mathcal{R}_{\alpha}(X))}) + (1+b) \sum_{q=0}^{n-1} \widetilde{Q}_q b^q 
\end{equation}
where the $B$ subscript denotes a choice of domain for the local complex corresponding to the K\"ahler hamiltonian $-h$ (corresponding to $T_{-\theta}$)
where the $\widetilde{Q}_q$ are power series in the variable $\lambda=se^{i\theta}$ 
which converge for $s=|\lambda|>1$. The coefficients of this series are non-negative integers.
Moreover
\begin{equation}
\label{equation_Poincare_polynomial_same_for_dual_22}
    \sum_{q=0}^n b^q Tr  (T_{1,\theta}|_{\mathcal{H}^{q}(\mathcal{P}_{\alpha}(X))})=b^n \sum_{q=0}^n b^{-q} Tr  (T_{1,-\theta}|_{\mathcal{H}^{q}(\mathcal{R}_{\alpha}(X))}).
\end{equation}
\end{theorem}

\begin{proof}
This is a straightforward corollary of the Morse inequalities and the dualities in Proposition \ref{Proposition_duality_complex_conjugation_for_local_Lefschetz_numbers}.
The equality \eqref{equation_Poincare_polynomial_same_for_dual_22} holds due to Serre duality.
\end{proof}

\section{Applications: Examples and generalizations}

In this section we discuss some applications of the holomorphic Morse inequalities. We begin by studying some examples, including explicit verifications of conjectural holomorphic Morse inequalities corresponding to the algebraic Lefschetz-Riemann-Roch formulas of \cite{baum1979lefschetz}. We then discuss Poincar\'e Hodge polynomials, Hirzebruch $\chi_y$ invariants, signature and other equivariant invariants. 

\subsection{Examples}
\label{Subsection_examples}

In \cite[\S 7.3]{jayasinghe2023l2}, we studied the holomorphic Lefschetz fixed point theorem for many natural examples of algebraic toric varieties with wedge metrics and K\"ahler Hamiltonian actions of algebraic tori. Here we study the holomorphic Morse inequalities in some of those cases.

In the smooth setting, the Dirac type operators being essentially self-adjoint, we recover the equivariant holomorphic Morse inequalities of Witten for smooth K\"ahler manifolds actions given in equation (20) of \cite{witten1984holomorphic}.
The contribution at isolated fixed points $a$ in the smooth setting can be written as (see equation (21) of \cite{witten1984holomorphic})
\begin{equation}
\label{equation_Naki_Mayina}
    b^{n_a} E_a(\theta) \prod_{\gamma_j^a >0} \frac{1}{1-e^{i\gamma_j^a \theta}} \prod_{\gamma_j^a <0} \frac{e^{i|\gamma_j^a| \theta}}{1-e^{i|\gamma_j^a| \theta}}
\end{equation}
Here $n_a$ is the number of negative $\gamma_j^a$, and is called the \textit{Morse index} at the zero $a$ of $V$ (this is twice the Morse index for the K\"ahler moment map considered as a classical Morse function). Here $E_a(\theta)$ is the trace of the induced action of $T_{\theta}$ on the vector space $E_a$ given by the restriction of the bundle $E$ to $a$.

This formula is derived by Witten by studying what happens on polydisk fundamental neighbourhoods of isolated smooth fixed points. The $\gamma_j^a$ are precisely the weights for the K\"ahler action at the fixed point given in definition \eqref{Definition_Kahler_Morse_function}.
Each product for a fixed positive weight $\gamma_j^a$ is the trace of the endomorphism $T_{\theta}$ over the Hardy space of $L^2$ bounded holomorphic functions on the disk factor where the group action is a rotation with weight $\gamma_j^a$.
On the other hand, the factor for $\gamma_j^a<0$ is derived in the discussion leading up to equation (27) of \cite{witten1984holomorphic}, as the trace over the basis of $L^2$ integrable anti-holomorphic one forms on the disc which are in the local cohomology of the dual complex. This is similar to the computations in Example \ref{Example_spinning_sphere_intro}.

We now study the Morse inequalities corresponding to the example of the singular quadric in Subsection 7.3.3 of \cite{jayasinghe2023l2}, for the minimal domain.

\begin{example}
\label{example_quadric_Morse_inequalities}
Consider $\widehat{M}$, the zero set of the polynomial $G(W,X,Y,Z)=Z^2-XY$ in $\mathbb{CP}^3$, which has an isolated singularity at $[W:X:Y:Z]=[1:0:0:0]$. 
This has an action of the complex torus $(\mathbb{C}^*)^2$ given by 
\begin{equation}
\label{equation_group_action_for_quadric}
    (\lambda,\mu) \cdot [W:X:Y:Z]=[W:\lambda^2 X:\mu^2 Y:\lambda \mu Z].
\end{equation}
A simple computation shows that this action, for generic values of $\lambda$ and $\mu$, will fix three points. The singular point is fixed, as are $a_1=[0:1:0:0]$ and $a_2=[0:0:1:0]$.

Let us consider the inequality corresponding to $p=0$.
We can consider the case of the circle action where $\lambda=e^{i\gamma_1 \theta}$ and $\mu=e^{i\gamma_2 \theta}$ with $\gamma_1 > \gamma_2 >0$. Then the singular fixed point contributes $\frac{1+\lambda \mu}{(1-\lambda^2)(1-\mu^2)}$ to the local Lefschetz polynomial.
Here $\mu / \lambda$, $1/\mu^2$ and $1/\lambda^2$ have negative exponents (up to the factor of $i$), and therefore the contributions to the polynomial are  $\frac{b^2 (\lambda/\mu)(\lambda^2) }{(1-\lambda/\mu)(1-\lambda^2)}$ at $a_1$ and $\frac{b \mu^2}{(1-\lambda/\mu)(1-\mu^2)}$ at $a_2$. Here $b$ is the formal variable in the polynomial.
\end{example}

The contribution to the dual inequalities from the singular fixed point is 
\begin{equation}
    b^2\frac{(1+\lambda \mu)\lambda^{-2}\mu^{-2}}{(1-\lambda^{-2})(1-\mu^{-2})}=b^2\frac{(1+1/\lambda \mu)\lambda^{-1}\mu^{-1}}{(1-\lambda^{-2})(1-\mu^{-2})} 
\end{equation}
where the series expansion of the latter in powers of $\lambda^{-1}, \mu^{-1}$ is easily seen to be the Morse polynomial at the singular critical point for the Serre dual complex with Hilbert space $L^2\Omega^{n,n-q}(X;\mathbb{C})$. The following argument shows that the global cohomology vanishes for $q>0$.
The sum of the local Morse polynomials for the complexes at the critical points is given by
\begin{equation}
\label{equation_example_1_500_1}
    \frac{1+\lambda \mu}{(1-\lambda^2)(1-\mu^2)}+\frac{b \mu^2}{(1-\lambda/\mu)(1-\mu^2)}+\frac{b^2 (\lambda/\mu)(\lambda^2) }{(1-\lambda/\mu)(1-\lambda^2)},
\end{equation}
and those for the dual complex is given by
\begin{equation}
\label{equation_example_1_500_2}
    b^2\frac{(1+\lambda \mu)\lambda^{-2}\mu^{-2}}{(1-\lambda^{-2})(1-\mu^{-2})}+\frac{b \mu/\lambda}{(1-\lambda/\mu)(1-\mu^2)}+\frac{1}{(1-\lambda/\mu)(1-\lambda^2)}.
\end{equation}
The Morse polynomial in \eqref{equation_example_1_500_1} can be expanded as a series in $\lambda, \mu$ where there are no non-negative powers of $\lambda, \mu$ 
for each $b$, and it is easy to see that there is only one constant term $1$.

The dual Morse polynomial in \eqref{equation_example_1_500_2} has a series expansion in non-positive powers of $\lambda, \mu$ for each $b$, and again it is clear that the only constant term $1$.
Thus taking the common terms in each polynomial we get only $1$, which is the classical Morse polynomial discussed in the introduction.
Since there are no $(1+b)$ terms in this classical Morse polynomial, the Morse lacunary principle (see Theorem 3.39 of \cite{banyaga2004lectures}) can be invoked to show that the classical Morse polynomial is equal to the Poincar\'e polynomial.

\begin{remark}[Related results and choices of formulation]
In \cite[\S 7.3]{jayasinghe2023l2}, we studied the holomorphic Lefschetz numbers for $p=1,2$, as well as the case of the spin-Dirac complex. The detailed computations in that article can be used to easily compute the holomorphic Morse inequalities corresponding to those Lefschetz numbers.

Furthermore, we saw that the holomorphic Lefschetz numbers in this setting matched with the Baum-Fulton-Quart versions in \cite{baum1979lefschetz,Baumformula81}, and thus these holomorphic Morse inequalities are a generalization of those results as well.

For $p=1$, we saw that the local cohomology groups are not free modules over the holomorphic functions. Thus we cannot simply multiply the local Morse polynomial for the untwisted complex by $E_{a}({\theta})$ as in the smooth case. 
\end{remark}

We copy from \cite{jayasinghe2023l2} our description of the Hamiltonian Morse functions corresponding to the $(\mathbb{C}^*)^2$ action on $\widehat{M}$.
Consider the $(\mathbb{C}^*)^3$ toric action on $\mathbb{CP}^3$ corresponding to the $(S^1)^3$ action given by $(\theta_1,\theta_2,\theta_3)\cdot [w:x:y:z] \rightarrow [w:e^{i\theta_1}x:e^{i\theta_2}y:e^{i\theta_3}z]$. This corresponds to the symplectic moment map 
\begin{equation}
\label{Moment_map_for_quadric}
    [w:x:y:z] \rightarrow [H_1,H_2,H_3]=\Bigg[\frac{|x|^2}{|\rho|^2},\frac{|y|^2}{|\rho|^2},\frac{|z|^2}{|\rho|^2} \Bigg]
\end{equation}
where $|\rho|^2=|w|^2+|x|^2+|y|^2+|z|^2$. Consider the Hamiltonian function $F_1=H_3^2-H_2H_1$, and let us write $\lambda^2=e^{i\theta_1}$, $\mu^2=e^{i\theta_2}$ and $\alpha=e^{i\theta_3}/(\lambda\mu)=e^{i(\theta_3-(\theta_1+\theta_2)/2)}=e^{i\theta_4}$. We see that $F_1^{-1}(0)=\{|z|^2-|xy|=0\}$, which is the space given by the points satisfying $(z^2 \alpha^2 -xy)=0$, for all $\alpha,w,x,y,z$ with $|\alpha|=1$ satisfying $z^2\alpha^2-xy=0$. Quotienting the vanishing set of $z^2-xy=0$ by the circle action corresponding to $e^{i\theta_4}$, we get the symplectic reduction of $F_1^{-1}(0)$, which is precisely the space $\widehat{M}$ and the K\"ahler action we studied is the restriction of the residual effective action on the reduced space.

\subsection{Conjectural Morse inequalities for other cohomology theories}
\label{subsubsection_conjectural_inequalities}

In \cite[\S 7.3.6]{jayasinghe2023l2} we showed that for non-normal toric varities, the Baum-Fulton-Quart Lefschetz-Riemann-Roch formulas do not match those in $L^2$ cohomology. 
The holomorphic Morse inequalities we proved in this article are for $L^2$ cohomology. 
In \cite{witten1984holomorphic}, he explains how to go from the Lefschetz-Riemann-Roch theorem on smooth complex manifolds to conjectural Morse inequalities, which we know hold on K\"ahler manifolds. We can follow the same idea to come up with conjectural Morse inequalities for the \cite{baum1979lefschetz} complexes and Hilbert complexes in \cite{LottHilbertconplex,RuppenthalSerre2018}.

We first discuss features of the Morse inequalities we study here, in more abstract language.
Given a $\mathbb{Z}$ graded co-chain complex on a space $X$ and a \textit{Morse function} $h$, a \text{conjectural abstract Morse inequality} is roughly of the form 
\begin{multline*}
\sum_{a \in crit(h)} \text{local (equivariant) Morse polynomials}(b) - \text{(equivariant) Poincar\'e polynomial}(b)\\ 
= (1+b) \text{Error polynomial}(b)
\end{multline*}
where the Poincar\'e polynomial is $\sum_k b^k \dim {H}^k(X)$ where ${H}^k(X)$ is the cohomology group of the complex. The equivariant Poincar\'e polynomial is $\sum_k b^k Tr T_{\theta} {H}^k(X)$ where $T_\theta$ is the endomorphism on the complex corresponding to a \textit{compatible} circle action. 
The local equivariant Morse polynomial at a critical point $a$ of $h$ is the equivariant Poincar\'e polynomial for the local cohomology group, $\sum_k b^k Tr T_{\theta} {H}^k(U_a)$, where ${H}^k(U_a)$ are local cohomology groups which depend on the Morse function at the critical point.

There are two main features of a useful Morse inequality.
\begin{enumerate}
    \item The formula for $b=-1$ must give the corresponding Lefschetz fixed point theorem.
    \item The error terms must have a \textit{positivity property}.
\end{enumerate}
The second feature is what allows the Morse inequality to be more useful than simply the Lefschetz fixed point theorem, allowing for the use of tools like the lacunary principle. 

\textit{\textbf{
Our conjecture is that this setup works for the complexes studied in \cite{baum1979lefschetz} and \cite{LottHilbertconplex,RuppenthalSerre2018}, with the correct choices of local cohomology groups for the complex.
}}

It is clear that the conjectural Morse inequality is completely determined by the local Morse polynomials. Given the setup in \cite{baum1979lefschetz,Baumformula81}, the Poincar\'e polynomial is defined. What remains is to determine the local Morse polynomial when the K\"ahler action is Hamiltonian. We assume we can decompose a neighbourhood of the critical point $a$ of the singular variety into a product $U_{a,s}\times U_{a,u}$, where the factors are attracting and expanding factors as we have studied. We can simply take the local cohomology of the complex for the attracting factor $U_{a,s}$. For instance in the case of the structure sheaf for a complete intersection variety, this is simply the local ring used in \cite{baum1979lefschetz}. For the expanding factor, we need to use the local cohomology of the dual complex (for instance for the structure sheaf, this is just the complex for the canonical bundle), with a new shifted grading defined by $[\cdot] \mapsto n-[\cdot]$ for a variety of complex dimension $n$.
The K\"unneth formula then gives us a cohomology group for $U_a$, and the equivariant Poincar\'e polynomial (series) for this group is the local Morse polynomial. We can renormalize the traces following ideas in \cite{Baumformula81} and \cite{jayasinghe2023l2} to define the equivariant local Morse polynomials.

We have verified that the holomorphic Morse inequalities hold for equivariant holomorphic sheaves studied in \cite{baum1979lefschetz,Baumformula81} in examples. We provide two here.

\begin{example}
\label{example_singular_calabi_yau}
Consider the example of $Z^4-X^3Y=0$ in $\mathbb{CP}^3$ with coordinates $[W:X:Y:Z]$, which admits the algebraic torus action $(\lambda,\mu)\cdot [W:X:Y:Z]=[W:\lambda^4X:\mu^4Y:\lambda^3\mu Z]$. We compute the equivariant index for the untwisted Dolbeault complex $\mathcal{P}=(\Omega^{0,q}(X), \overline{\partial})$.
There are three fixed points at $a_1=[1:0:0:0]$, $a_2=[0:1:0:0]$ and $a_3=[0:0:1:0]$, where $a_2$ is a smooth fixed point while $a_1$ and $a_3$ are singular. The singular set is $[W:0:Y:0]$.
\end{example}

This space can be obtained as the compactification of the affine cone over the singular algebraic curve $Z^4-X^3Y=0$ in $\mathbb{CP}^2$. Thus the point $a_1$ is a stratum of depth $2$ within the singular set. We studied this example in some detail in \cite[\S 7.3.6]{jayasinghe2023l2}. In particular we showed that the Baum-Fulton-Quart Lefschetz Riemann-Roch number in \cite[\S 3.3]{baum1979lefschetz}, obtained by summing up the local Lefschetz numbers for $a_1, a_2$ and $a_3$ in order is
\begin{equation}
\label{equation_Singular_Calabi_Yau_p=0}
    \frac{1+(\lambda^3\mu)+(\lambda^3\mu)^2+(\lambda^3\mu)^3}{(1-\lambda^4)(1-\mu^4)}+ \frac{1}{(1-1/\lambda^4)(1-\mu/\lambda)}
    +\frac{1+(\lambda^3/\mu^3)
    +(\lambda^3/\mu^3)^2+(\lambda^3/\mu^3)^3}{(1-1/\mu^4)(1-\lambda^4/\mu^4)}= 1+\frac{\lambda^5}{\mu}.
\end{equation}
We explained there that the second term on the right hand side was the trace of the geometric endomorphism on a global section in degree $2$ of the complex.
We now consider the holomorphic Morse inequalities for the circle action corresponding to $s=1$ where $\lambda=se^{i\gamma_1 \theta}$, $\mu=se^{i\gamma_2 \theta}$ with $\gamma_1>0, \gamma_2>0, \gamma_2-\gamma_1 >0$. Here we shall write it as Laurent series are convergent for $s<1$, as defined in \eqref{equation_renormalized_trace_definition}. 
The contribution from $a_1$ is the same as that for the Lefschetz-Riemann-Roch formula, those for $a_2$ and $a_3$ are
\begin{equation}
    \frac{b \lambda^4}{(1-\lambda^4)}\frac{1}{(1-\mu/\lambda)}, \hspace{5mm} (1+(\lambda^3/\mu^3)
    +(\lambda^3/\mu^3)^2+(\lambda^3/\mu^3)^3)\frac{b \mu^4}{(1-\mu^4)}\frac{b \mu^4/\lambda^4}{(1-\mu^4/\lambda^4)}.
\end{equation}
It is easy to see that the only terms in the series with $e^{ic\theta}$ for $c \leq 0$ are given by $1$ from the series expansion of $a_1$, and $b^2 (\lambda^3/\mu^3)^3 \mu^4 \mu^4/\lambda^4=b^2\lambda^5/\mu$ from the series corresponding to $a_3$.

Similarly if we take $\gamma_1<0, \gamma_2<0, \gamma_2-\gamma_1<0$ and write the contributions in terms of formal sums where the Laurent series are convergent for $s>1$, 
we see that the contribution from $a_3$ is the same as that for the Lefschetz-Riemann-Roch formula, and those for $a_1$ and $a_2$ are 
\begin{equation}
    (1+(\lambda^3\mu)+(\lambda^3\mu)^2+(\lambda^3\mu)^3) \frac{b/\lambda^4}{(1-1/\lambda^4)} \frac{b /\mu^4}{(1-1/\mu^4)}, \hspace{5mm} \frac{1}{(1-\lambda^{-4})}\frac{b\lambda/\mu}{(1-\lambda/\mu)}.
\end{equation}
The only terms in the series with $e^{ic\theta}$ for $c > 0$ are given by $1$ from the series expansion of $a_3$, and $b^2 (\lambda^3\mu)^3 \mu^{-4}/\lambda^{-4}=b^2\lambda^5/\mu$ from the series corresponding to $a_1$.

Thus the classical Morse polynomial is given by $1+b^2 \lambda^5/\mu$, which by the lacunary principle is equal to the Poincar\'e polynomial which we can verify explicitly by computing traces over the global cohomology groups as in \cite[\S 7.3.6]{jayasinghe2023l2} and we see that the conjectural holomorphic Morse inequalities hold for this example.

We explained in \cite[\S 7.3.6]{jayasinghe2023l2} that the section in degree $2$ of the complex is the complex conjugate of the trivializing section of the canonical sheaf. Indeed the Lefschetz-Riemann-Roch formula for the complex twisted by the square root of the canonical sheaf gives the same formula as equation \eqref{equation_Singular_Calabi_Yau_p=0} up to an overall factor of $\sqrt{\mu/ \lambda^5}$ multiplying all terms.
Indeed this shows that there may exist global cohomology for the complex in the Baum-Fulton-MacPherson-Quart sense, even when there are K\"ahler Hamiltonian $S^1$ actions on the space. We contrast this with the rigidity result in the smooth setting in Subsection \ref{subsection_Rigidity_version_1}.
In the smooth case where K3 surfaces are the only K\"ahler manifolds of real dimension 4 with vanishing first Chern class, there are no K\"ahler circle actions.

Let us consider the simpler example of the cusp curve which we studied in Example 7.33 of \cite{jayasinghe2023l2}. This is a space with a non-normal singularity. We compute the $L^2$ and Baum-Fulton-MacPherson holomorphic Lefschetz numbers for a toric action in that article. In \cite{LottHilbertconplex} Lott constructs a Hilbert complex based on a complex in \cite[Corollary 1.2]{RuppenthalSerre2018} which has the same Todd class as the Baum-Fulton-MacPherson Todd class. Thus the local and global holomorphic Lefschetz numbers for this analytic complex would be the same as those given in \cite{baum1979lefschetz}. However the conjectural holomorphic Morse inequalities for the analytic complex on spaces is in general different from the conjectural ones for the Baum-Fulton-MacPherson theory. 

\begin{example}[Cusp singularity in $\mathbb{CP}^2$]
\label{example_cusp_singularity}
Consider the cusp curve given by $ZY^2-X^3=0$ in $\mathbb{CP}^2$, where we have the $\mathbb{C}^*$ action $(\lambda)\cdot [X:Y:Z]=[\lambda^2X:\lambda^3Y:Z]$. We consider the associated family of geometric endomorphisms on the Dolbeault complex for the trivial bundle. The action has one smooth fixed point at $[0:1:0]$. If we consider the case where $|\lambda|<1$, then the local Morse polynomial at the smooth fixed point is given by $b \sum_{k=1}^{\infty} \lambda^{k}$. 
The other fixed point is at the singularity $[0:0:1]$.
\end{example}

We observed in \cite{jayasinghe2023l2} that the local ring at the singular point is the module with \textit{basis} ${1,t^2,t^3,t^4,...}$ where $t=Y/X$. Thus the local Morse polynomial corresponding to \cite{baum1979lefschetz} is given by $1+\lambda^2 (\sum_{k=0}^{\infty}\lambda^k)=1/(1-\lambda) - \lambda$.

The complex given in equation (1.2) of \cite{LottHilbertconplex} for the cusp curve we study here, has local cohomology at the singularity given by the holomorphic functions in degree $0$, which is the Hilbert space completion of $\mathbb{C}[t]$, and has local cohomology in degree $1$ generated by $t$ of Lott, where $t$ is the element in the skyscraper sheaf given by $\mathcal{O}_s/ \mathcal{O}_X$, where $\mathcal{O}_s$ is the sheaf of germs of weakly holomorphic functions while $\mathcal{O}_X$ is the structure sheaf). Thus the local Morse polynomial is given by $\sum_{k=0}^{\infty}\lambda^k +b \lambda =1/(1-\lambda) +b \lambda$.

Now it is easy to observe that the classical Morse polynomials for both complexes are equal, given by $1+b\lambda$. We explained in \cite{jayasinghe2023l2} how there is a section in the degree $1$ global singular cohomology group for these complexes (of \cite{LottHilbertconplex} and of \cite{baum1979lefschetz}), which is not $L^2$ bounded and thus not appearing in the $L^2$ cohomology. \textit{This example shows that the local Morse inequalities for these complexes can be different, even while the classical Morse polynomials are the same.}

\begin{remark}
\label{Remark_considerations_integrals_cup_products}
While there is a cup product for singular cohomology on singular spaces, the failure of Poincar\'e duality also means one has to be more careful when trying to define dual inequalities. Aside from this, the fact that certain classes correspond to sections that are not integrable means that the cup products may not be straightforwadly interpretted as integrals, important when it comes to physical applications similar to those we study in Subsection \ref{subsection_Nut_charge_signature} (see Remark \ref{Remark_formulas_NUTs_other_cohom_theory}). 
\end{remark}

As discussed in Subsection \ref{subsection_background_related_results}, there are counterexamples to the holomorphic Morse inequalities when the action is not associated to a Bialynicki-Birula decomposition (see Remark 4.2 \cite{wu1996equivariant}), whereas the theorem has been proven when such a decomposition exists on smooth complex manifolds in \cite{wu2003instanton}.

\subsection{Poincar\'e Hodge polynomials}
\label{subsection_Poincare_Hodge_polynomial}

In \cite{wu1996equivariant}, the equivariant Poincar\'e Hodge polynomials are studied in the smooth case, which are sometimes also called $Ell$ functions (see, e.g., \cite{Libgober_2018}). These admit natural extensions to Hilbert complexes associated to singular spaces that we study in this article.
We will denote the Dolbeault complexes $^p\mathcal{P}_{\alpha}=(L^2\Omega^{p,\cdot}(X;E),\mathcal{D}_{\alpha}(P),P)$ where $P_E=\overline{\partial}_{\Lambda^{p,0}\otimes E}$, and we define the corresponding \textit{\textbf{Poincar\'e Hodge polynomial}} to be 
\begin{equation}
    \chi_{y,b}(\mathcal{P}_{\alpha}):=\sum_{p=0}^n y^p \sum_{q=0}^n b^q \dim {\mathcal{H}^{q}(^p\mathcal{P}_{\alpha})}
\end{equation}
where we abuse notation to denote the Dolbeault complexes for all $p$ with fixed $\alpha$ in the left hand side of the above equality.
Given a circle action as we study in this article, we define the \textbf{\textit{equivariant Poincar\'e Hodge polynomial}} for the Dolbeault complex to be 
\begin{equation}
\label{equivariant_Poincare_Hodge_definition}
    \chi_{y,b}(\mathcal{P}_{\alpha},T_{\theta}):=\sum_{p=0}^n y^p \sum_{q=0}^n b^q TrT_{\theta}|_{\mathcal{H}^{q}(^p\mathcal{P}_{\alpha})}
\end{equation}
for global complexes as well as local complexes $^p\mathcal{P}_{\alpha}(U)=(L^2\Omega^{p,\cdot}(U;E),\mathcal{D}_{\alpha}(P_U),P_U)$.
These satisfy the following duality
\begin{proposition}
\label{Proposition_duality_Poincare_Hodge}
Let $X$ be a resolved stratified pseudomanifold with a K\"ahler wedge metric, and let $T_{\theta}$ be a geometric endomorphism corresponding to an isometric K\"ahler Hamiltonian circle action on 
$^p\mathcal{P}_{\alpha}=(L^2\Omega^{p,\cdot}(X;E),\mathcal{D}_{\alpha}(P_E),P_E)$ where $P_E=\overline{\partial}_{\Lambda^{p,0}\otimes E}$.
We denote the Serre dual complexes as $(^{p}\mathcal{R}_{\alpha})^*=(L^2\Omega^{p,n-\cdot}(X;E^*),\mathcal{D}_{\alpha}(P^*_{E^*}),P^*_{E^*})$ where $P_{E^*}=\overline{\partial}_{\Lambda^{p,0}\otimes E^*}$. We have $(^p\mathcal{P}_{\alpha})_{SD}=(^{n-p}\mathcal{R}_{\alpha})^*$.
Then
\begin{equation}
\label{equation_duality_Poincare_Hodge}
   \chi_{y,b}(\mathcal{P}_{\alpha},T_{\theta})=\chi_{y^{-1},b^{-n}}(\mathcal{R}_{\alpha},T_{-\theta})(by)^n
\end{equation}
and for fixed points $a$ of $T_{\theta}$,
\begin{equation}
\label{equation_duality_Poincare_Hodge_local}
   \chi_{y,b}(\mathcal{P}_{\alpha,B}(U_a),T_{\theta})=\chi_{y^{-1},b^{-1}}(\mathcal{R}_{\alpha,B}(U_a),T_{-\theta})(by)^n.
\end{equation}
where the choice of domain $B$ corresponds to the geometric endomorphism on the fundamental neighbourhoods $U_a$ (see Remark \ref{remark_domains_and_geometric_endomorphisms}), which are duals for $T_{\theta}$ and $T_{-\theta}$.
\end{proposition}

\begin{proof}
The proof follows from Proposition \ref{Proposition_duality_complex_conjugation_for_local_Lefschetz_numbers} and following the definitions of the invariants on both sides, noticing that there are sums over all $p$.
\end{proof}

It is immediate from Theorem \ref{Theorem_strong_Morse_Dolbeault_version_2_intro} that there are Morse inequalities for the Poincar\'e Hodge polynomials. For $y=b$, and $\mathcal{D}_{\min}(\overline{\partial})=\mathcal{D}_{\alpha=1}(\overline{\partial})$ we recover the \textbf{de Rham Morse inequalities} given in Theorem 1.5 of \cite{jayasinghe2023l2} (which hold more generally than for the K\"ahler Hamiltonian group actions that we study in this article) and when $y=-1=b$, we recover the Lefschetz fixed point theorem for the de Rham setting in that article.

\begin{remark}
\label{Remark_minimal_domain_equivalences}
Given a twisted Dolbeault complex $^p\mathcal{P}_{\alpha}=(L^2\Omega^{p,\cdot}(X;E),\mathcal{D}_{\alpha}(P_E),P_E)$ where $P_E=\overline{\partial}_{\Lambda^{p,0}\otimes E}$,  
we can express the cohomology groups as the quotients $(ker P_E/ im P_E)$. We note that $P_E$ is simply the projection of $d_E$ onto the $\Omega^{p,\cdot}(E)$ forms which can be define using the complex structure, and can be defined independent of the metric.
Thus when $\alpha=1$, the cohomology groups of the Dolbeault complex are completely determined by the minimal domain for $P_E$. 

In the case where $E$ is a flat bundle, we have the twisted de Rham complex   
\begin{equation}
    \mathcal{Q}=(L^2\Omega^k(X;E)),\mathcal{D}_{\min}(d_E),d_E),
\end{equation}
and the cohomology of this complex inherits the Hodge bi-grading given by the K\"ahler structure and we denote the cohomology groups in bi-degree $(p,q)$ as $\mathcal{H}^{p,q}(\mathcal{Q})$.

The cohomology of the Dolbeault complex with the minimal domain does not vary for different Hermitian wedge metrics, and we can find a Hermitian wedge metric for which the Dolbeault-Dirac operator satisfies the geometric Witt condition.
Since the Laplace-type operators for the de Rham and Dolbeault complexes on a K\"ahler space differ by a constant factor, this shows that the \textit{global} cohomology groups satisfy $\mathcal{H}^q(^p\mathcal{P}_{\alpha=1}) = \mathcal{H}^{p,q}(\mathcal{Q})$ for the global complex. In particular, since the de Rham complex is globally Fredholm, so is the Dolbeault complex.
\end{remark}

We define the \textit{\textbf{equivariant Hirzebruch $\chi_y$ invariant}} for a given Dolbeault complex as in the discussion above by
\begin{equation}
    \chi_{y}(\mathcal{P}_{\alpha},T_{\theta}):=\chi_{y,-1}(\mathcal{P}_{\alpha},T_{\theta}).
\end{equation}
While the $\chi_{y,b}$ polynomials can be studied in the K\"ahler setting, the Hirzebruch $\chi_y$ invariants are of more general interest on spaces with complex, and even almost complex structures.
In \cite[\S 7.2.7]{jayasinghe2023l2}, we studied the case when $b=-1$, for arbitrary $y$ where we restricted ourselves to the setting where the VAPS domain corresponded to the minimal domain for $P$ since the main results were proven for that domain. 
\textit{\textbf{In order to simplify the exposition we will now restrict ourselves to the case $E=\mathbb{C}$}} for the rest of this section.
Versions of the following results hold for twisted complexes as well, and twisted signatures are of wide interest in the study of elliptic genera and the Witten genus (see, e.g., \cite{hirzebruch1992manifolds}.

While it is possible to study versions of the Poincar\'e Hodge polynomials and related invariants for general domains, the case of the minimal domain for $\overline{\partial}$ is the case of most interest due to the correspondence of the cohomology of the Dolbeault and de Rham complexes highlighted in Remark \ref{Remark_minimal_domain_equivalences}.

It is a simple corollary of Proposition \ref{Proposition_duality_Poincare_Hodge} that the $\chi_{y=1}$ invariants vanish unless $n$ is even. This generalizes a well known property of the Hirzebruch signature invariant in the smooth setting. The following is a more general property of the equivariant $\chi_1$ invariants.

\begin{remark}[Self duality of $\chi_y$ invariant under Serre duality]
\label{remark_self_dual_chi_y}
In the setting of Proposition \ref{Proposition_duality_Poincare_Hodge}, for $E=\mathbb{C}^n$, we have that the $\chi_y$ invariant is \textit{\textbf{self dual under Serre duality}}, in the sense that $\chi_{y}(\mathcal{P}_{\alpha},T_{\theta})=\chi_{y^{-1}}(\mathcal{P}_{\alpha},T_{-\theta}) (-y^{n})$ and $\chi_{y}(\mathcal{P}_{\alpha,B}(U_a),T_{\theta})=\chi_{y^{-1}}(\mathcal{P}_{\alpha,B}(U_a),T_{-\theta})(-y)^n$ in the notation of Proposition \ref{Proposition_duality_Poincare_Hodge}.
The cases of $y=\pm 1$ motivate the name \textit{self duality} under Serre duality for this property.
\end{remark}

We state the following results, referring to \cite[\S 7.2.7]{jayasinghe2023l2} for more details where it was studied in the case where the cohomology of the VAPS domain matches that of the minimal domain.

\begin{theorem}[Lefschetz Hodge index theorem]
\label{Theorem_L2_Lefschetz_Hodge_index}
In the same setting as that of Proposition \ref{Proposition_duality_Poincare_Hodge}, where $E=\mathbb{C}$, the local equivariant signature at the fixed point $a$ is given by $\chi_{1}(\mathcal{P}_{1,B}(U_a),T_{\theta})$, and the global equivariant signature by $\chi_{1}(\mathcal{P}_{1,B}(X),T_{\theta})$.
\end{theorem}

\begin{remark}[Some connections to instanton computations]
\label{Remark_instanton_computations}
    In \cite[7.2.8]{jayasinghe2023l2}, we systematically explore how the $\chi_y$ invariant can be used to compute equivariant indices of self-dual and anti-self-dual complexes, with applications to computations that arise in physics. There we studied how certain relationships of instanton partition functions arising in equivariant Yang-Mills theories such as in \cite{pestun2012localization,festuccia2020transversally,festuccia2020twisting} are related to those of $\chi_y$ invariants.
\end{remark}

\section{Applications: Rigidity and more formulas}

The phenomenon that the index of operators restricted to equivariant Hilbert complexes vanishes for all $\mu \neq 0$ is usually called \textit{\textbf{rigidity}}. The equivariant indices of the signature and spin Dirac operators are self-dual under Serre duality as we explained in the previous section, and are rigid in the smooth setting when there are group actions (see, e.g. \cite{bott1989rigidity}).

\begin{remark}  
\label{Remark_Self_duality_symmetries}
Self duality under Serre duality shows that \textbf{if} one has a global Lefschetz number that has an expansion as $\sum_{j} m_j \lambda^{j}$, where $\lambda=se^{i\theta}$, then $m_j= m_{-j}$, and the global Lefschetz number is symmetric under the transformation $\lambda \mapsto \lambda^{-1}$.
\end{remark}

\subsection{Rigidity of $L^2$ de Rham cohomology}
\label{subsection_Rigidity_version_1}

The de Rham complex and its cohomology has many deformation invariance properties which is useful in seeing the rigidity of equivariant indices which can be expressed as linear combinations of traces of geometric endomorphisms over de Rham cohomology groups. The includes the equivariant Euler characteristic, signature, self-dual and anti-self-dual complexes.

We begin with a proposition which gives a correspondence between the local de Rham cohomology and the cohomology of the equivariant subcomplex with $\mu=0$ with respect to a local action.
Here we briefly summarize the local de Rham cohomology for fundamental neighbourhoods of critical points on Witt spaces, appearing in Theorem 1.5 of \cite{jayasinghe2023l2} referring to subsections 5.2.3 and 6.5.1 of that article for more details.

Given a fundamental neighbourhood $U_a$ of a critical point of a stratified Morse function as in Definition \ref{Definition_Kahler_Morse_function}, we can define de Rham complexes on the attracting factor $U_{a,s}$ to be $(L^2\Omega^{k}(U_{a,s}), \mathcal{D}_N(d_\varepsilon), d_\varepsilon)$ where $d_{U_{a,s},\varepsilon}$ is the Witten deformed de Rham operator acting on sections in $U_a$ and $\mathcal{D}_N(d_\varepsilon)$ is the maximal domain for the de Rham operator (this is defined differently for general operators in \cite{jayasinghe2023l2} but is equal to the maximal domain due to the Witt condition which implies that the Dirac operator corresponding to the de Rham operator is essentially self-adjoint). On the expanding factor we have $(L^2\Omega^{m-k}(U_{a,u}), \mathcal{D}_N(d^*_\varepsilon), d^*_\varepsilon)$ where $m$ is the dimension of $U_{a,u}$. We consider the product complex on the neighbourhood $U_a$, denoting it as $\mathcal{Q}_B(U)=(L^2\Omega_B^k(U)),\mathcal{D}_{B}(d_\varepsilon),d_\varepsilon)$.

\begin{proposition}
\label{proposition_de_rham_dolbeault_local_complex_correspondence}
Let $U_a=U$ be a tangent cone of a critical point, equipped with a local K\"ahler Hamiltonian Morse function.
Then the local de Rham cohomology  
group $\mathcal{Q}_B(U)=(L^2\Omega_B^k(U)),\mathcal{D}_{B}(d),d)$ (as discussed above for $\varepsilon=0$).
The cohomology groups of this complex inherit the Hodge bi-grading as discussed in Remark \ref{Remark_minimal_domain_equivalences}. Consider also the local Dolbeault complexes $^p\mathcal{P}_{\alpha,B}(U)=(L^2\Omega^{p,\cdot}(X;\mathbb{C}),\mathcal{D}_{\alpha}(P),P)$ where $P=\overline{\partial}_{\Lambda^{p,0}}$.
Then we have that 
\begin{equation}
    \mathcal{H}^q(^p\mathcal{P}^{\mu=0}_{\alpha,B}(U)) = \mathcal{H}^{p,q}(\mathcal{Q}_B(U))
\end{equation}
for all $\alpha \in [0,1]$ in each bi-degree $(p,q)$, where $^p\mathcal{P}^{\mu=0}_{1,B}(U)$ is the local equivariant sub-complex with eigenvalue $\mu=0$ with respect to the operator $\sqrt{-1}L_V$ where $V$ is the Hamiltonian vector field on the tangent cone.
\end{proposition}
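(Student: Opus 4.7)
The approach is to identify both sides with spaces of harmonic $L^2$ $(p,q)$-forms on the tangent cone via K\"ahler Hodge theory, and then to observe that any such harmonic form automatically has $\mu = 0$ under $\sqrt{-1}L_V$, so that the $\mu = 0$ equivariant subcomplex on the Dolbeault side loses no cohomology and is $\alpha$-independent.

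First I would use the K\"ahler structure on the product $U = U_{a,s} \times U_{a,u}$, which is preserved by the Hamiltonian vector field $V$, together with the K\"ahler identity $\Delta_d = 2\Delta_{\bar{\partial}}$ on smooth forms. Combined with the local Hodge theory from Theorem 1.5 of \cite{jayasinghe2023l2} (obtained via the analysis of subsections 5.2.3 and 6.5.1), this identifies $\mathcal{H}^{p,q}(\mathcal{Q}_B(U))$ with the space of $\Delta_{\bar{\partial}}$-harmonic $L^2$ $(p,q)$-forms in the product domain $\mathcal{D}_B$, and identifies $\mathcal{H}^q({}^p\mathcal{P}_{\alpha,B}(U))$ with the analogous space in the $\alpha$-domain for $P = \bar{\partial}_{\Lambda^{p,0}}$. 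The problem then reduces to showing \emph{(i)} the two domain prescriptions pick out the same harmonic forms, and \emph{(ii)} every such harmonic form lies in the $\mu = 0$ sector.

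For \emph{(ii)}, since $V$ is a holomorphic Killing field, $L_V$ commutes with $\Delta_{\bar{\partial}}$, so the orthogonal projection onto harmonic forms commutes with the unitary one-parameter group generated by $\sqrt{-1}L_V$. Each harmonic form thus decomposes into $L_V$-eigencomponents that are themselves harmonic; these eigencomponents produce independent de Rham classes in bi-degree $(p,q)$, and the finite-dimensional local de Rham cohomology computed in \cite{jayasinghe2023l2} then forces all nonzero-$\mu$ components to vanish. For \emph{(i)}, within the $\mu = 0$ sector the weight-zero modes on each conic factor admit a unique closed extension by indicial analysis, so the cohomology is insensitive to the choice $\alpha \in [0,1]$.

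The main obstacle will be the rigorous domain analysis on the non-compact tangent cone: verifying that the $\mu = 0$ harmonic forms are genuinely independent of $\alpha$ and that the K\"ahler identity and Hodge decomposition carry over to the maximal/minimal domain setting. This is where the explicit Mellin transform and weighted Bessel-type decompositions of subsections 5.2.3 and 6.5.1 of \cite{jayasinghe2023l2} become essential, and where one must match up mode-by-mode the kernels of $\Delta_d$ and $\Delta_{\bar{\partial}}$ on each conic factor of $U$.
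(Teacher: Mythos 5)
There is a genuine gap, and it sits at the heart of your plan: step \emph{(ii)}, the claim that every Dolbeault-harmonic form in the $\alpha$-domain automatically has $\mu=0$, so that the $\mu=0$ subcomplex ``loses no cohomology,'' is false. The local Dolbeault cohomology $\mathcal{H}^q({}^p\mathcal{P}_{\alpha,B}(U))$ of a fundamental neighbourhood is strictly larger than its $\mu=0$ part: already on the model attracting cone the degree-zero cohomology contains the holomorphic monomial-type classes of every nonzero weight $\mu$, and it is exactly these classes that make the local equivariant Lefschetz numbers nontrivial series such as $\bigl((1-e^{ip\theta})(1-e^{iq\theta})\bigr)^{-1}$ used later in this paper for the NUT-charge expansions. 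The proposition asserts an isomorphism between the $\mu=0$ \emph{part} of the Dolbeault cohomology and the de Rham cohomology precisely because the full Dolbeault cohomology is much bigger. Your supporting argument for (ii) also breaks down at the point where you say the $L_V$-eigencomponents ``produce independent de Rham classes'': on the non-compact cone, with different domain prescriptions for the two complexes, membership in the kernel of $\Delta_{\bar\partial}$ (or of $\Delta_d$ via the K\"ahler identity) does not imply membership in $\ker d\cap\ker d^*$ for the de Rham Hilbert complex — e.g. $z^k$ is annihilated by the Laplacian but $dz^k\neq 0$ — so a nonzero-$\mu$ Dolbeault-harmonic form need not define any de Rham class, and the finite-dimensionality of the local de Rham cohomology cannot be used to kill it. The K\"ahler identity transfer of harmonicity between Hilbert complexes with domains $\mathcal{D}_B$ and $\mathcal{D}_\alpha$ is exactly what must be proved, not assumed.

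The paper's proof avoids this by establishing the two containments with explicit representatives on a single cone $C(Z)$: the de Rham harmonic representatives are pullbacks of harmonic forms on the link lying in the basic cohomology of the Reeb foliation (Lemma 6.16 and Proposition 7.1 of \cite{jayasinghe2023l2}), hence satisfy $\partial_x\phi=0$ and $L_V\phi=0$ and so live in the $\mu=0$ Dolbeault cohomology; conversely, a class $u$ with $\mu=0$ is shown to be tangential (analyticity of harmonic sections plus the boundary conditions of the chosen domain), so $\iota_Vu=0$ and $L_Vu=\partial_Vu=0$, the Dolbeault--Dirac equation $(\partial_x+\tfrac{i}{x}\partial_V)u=0$ then forces $\partial_xu=0$, and separation of variables gives $\Delta_Zu=0$, identifying $u$ with a de Rham harmonic form; the expanding factor is handled by Poincar\'e/Serre duality via the Hodge star, and general tangent cones by the K\"unneth formula. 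Your instinct that the $\mu=0$ modes are insensitive to $\alpha$ is correct in spirit, but it must be extracted from this mode-by-mode analysis rather than from a global Hodge-theoretic identification of the two complexes.
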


\begin{proof}
We first prove this in the case of a cone $U=C_x(Z)$ where the Hamiltonian Morse function is $x^2$ (attracting factor) where $V$ is the Reeb vector field on the Sasaki structure that extends to the action of the cone as in Definition \ref{Definition_Kahler_Morse_function}.

The de Rham cohomology of a cone $\mathcal{H}^{p,q}(\mathcal{Q}_B(C(Z)))$ was computed in Lemma 6.16 of \cite{jayasinghe2023l2} where the harmonic representatives $\phi$ were shown to be given by the pullback under $\pi_Z: C(Z) \rightarrow Z=\{x=1\}$ of the harmonic representatives of the cohomology of $Z$, of degree $k \leq [\frac{l-1}{2}]$ where $l$ is the dimension of the link $Z$.
In particular that ${\partial_x} \phi=0$ (short for $\nabla_{\partial_x} \phi=0$).
We observed in the proof of Proposition 7.1 of \cite{jayasinghe2023l2} that these harmonic representatives, when restricted to the link $Z$ are in the basic cohomology corresponding to the Reeb foliation, and in particular $L_V \phi=0$.

Now consider elements in $\mathcal{H}^q(^p\mathcal{P}^{\mu=0}_{\alpha,B}(U))$. 
The condition that $\mu=0$ is equivalent to $L_Vu=0$.
Any element in the Dolbeault complex can be written as the sum of a \textit{tangential} form and a \textit{normal} form using the decompostion
\begin{equation}
\label{Dolbeault_form_decomposition_Morse}
    \Omega^{0,q}(C(Z))=\Omega^{0,q}(Z) \oplus (dx-\sqrt{-1}xJ(dx)) \wedge \Omega^{0,q-1} (Z)
\end{equation}
and the analyticity of the harmonic sections together with the boundary conditions corresponding to the choice of domain imply that the elements $u \in \mathcal{H}^q(^p\mathcal{P}^{\mu=0}_{\alpha,B}(U))$ are tangential on the attracting factor. In particular $\iota_Vu=0$ for $u$ in the local cohomology group. Then $L_Vu=\iota_V d u=\partial_Vu=0$.
Since such $u$ are also in the null space of the Dolbeault-Dirac operator, $(\partial_x + i\frac{1}{x}\partial_V)u=0$ which implies that $\partial_xu=0$ as well. The analysis for the harmonic sections of the Laplace-type operator in Proposition 5.33 of \cite{jayasinghe2023l2} using separation of variables shows that if $\Delta u=0$ and $\partial_x u=0$, then $\Delta_Z u=0$ where $\Delta_Z$ is the Hodge Laplacian on $Z$ (since the metric is K\"ahler). These are precisely the elements in the null space of $\mathcal{H}^{p,q}(\mathcal{Q}_B(C(Z)))$ once integrability on the cone is taken into account.
This proves the result for the case of $C(Z)$ which is an attracting factor.

The case when the critical point has an expanding fundamental neighbourhood follows from duality since the cohomology groups of the attracting and expanding factors are related by Poincar\'e duality for the de Rham complex, and Serre duality for the Dolbeault complex, both of which are realized by the Hodge star operator.
For the case of general tangent cones as in Definition \ref{Definition_Kahler_Morse_function}, the result follows from the K\"unneth formula.  
\end{proof}

\begin{remark}
The above result strengthens Proposition 7.1 of \cite{jayasinghe2023l2} where we showed that the harmonic representatives of the local de Rham cohomology are harmonic representatives of the local Dolbeault cohomology for a K\"ahler cone. 

Moreover we observe that this local rigidity can be applied to Hamiltonian's corresponding to isometric circle actions on stratified pseudomanifolds with wedge metrics and wedge symplectic structures where the Hamiltonian action is locally a K\"ahler action in a fundamental neighbourhoods of each critical point, with a local normal form in Definition \ref{Definition_Kahler_Morse_function}, and this allows one to prove the rigidity of $L^2$ cohomology for such actions using ideas in the theorem stated below.
\end{remark}

\begin{theorem}
\label{Theorem_instanton_correspondence_deRham_Dolbeault_non_intro}
Let $X$ be a resolved stratified pseudomanifold with a wedge K\"ahler metric and a K\"ahler Hamiltonian morse function $h$. Then the Witten deformed de Rham complex in Theorem 6.33 of \cite{jayasinghe2023l2} is isomorphic as a Hilbert complex to the direct sum over $p$ of the Witten deformed complexes given in Theorem \ref{theorem_small_eig_complex_intro} for the equivariant Dolbeault complexes $^p\mathcal{P}^{\mu=0}_{\alpha}=(L^2\Omega_{\mu=0}^{p,\cdot}(X;\mathbb{C}),\mathcal{D}^{\mu=0}_{\alpha}(P),P)$ where $P=\overline{\partial}_{\Lambda^{p,0}}$ for any $\alpha$. In particular the Witten deformed de Rham complex inherits the Hodge bi-grading.
\end{theorem}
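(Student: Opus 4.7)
The plan is to reduce the global statement to the local correspondence at each critical point, using the fact that both Witten deformed complexes are small eigenvalue complexes localized in fundamental neighbourhoods of critical points of $h$.

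First, I would recall the construction of the Witten deformed de Rham complex from Theorem 6.33 of \cite{jayasinghe2023l2}: as the Witten parameter $\varepsilon$ grows, the low-lying spectrum of $d_\varepsilon + d_\varepsilon^*$ is generated by sections concentrated in fundamental neighbourhoods $U_a$ of critical points $a$ of $h$, and the corresponding small eigenvalue complex has generators in degree $k$ parametrized by the local de Rham cohomology $\mathcal{H}^k(\mathcal{Q}_B(U_a))$. Similarly, for each $p$, the Witten deformed complex from Theorem \ref{theorem_small_eig_complex_intro} applied to $^p\mathcal{P}^{\mu=0}_{\alpha}$ has generators in degree $q$ parametrized by $\mathcal{H}^q(^p\mathcal{P}^{\mu=0}_{\alpha,B}(U_a))$; the $\mu=0$ condition ensures localization at the fixed points of the circle action generated by the Hamiltonian vector field $V$, which coincide with the critical points of $h$.

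Next, at each critical point $a$, Proposition \ref{proposition_de_rham_dolbeault_local_complex_correspondence} provides the bi-graded identification
\begin{equation}
    \mathcal{H}^k(\mathcal{Q}_B(U_a)) = \bigoplus_{p+q=k} \mathcal{H}^q(^p\mathcal{P}^{\mu=0}_{\alpha,B}(U_a)),
\end{equation}
and summing over critical points identifies the underlying $\mathbb{Z}$-graded Hilbert spaces of the two Witten deformed complexes, along with a natural Hodge bi-grading on the de Rham side.

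The main obstacle is matching the differentials of the two Witten deformed complexes under this identification. I would argue this by exploiting the K\"ahler decomposition $d_\varepsilon = \partial_\varepsilon + \overline{\partial}_\varepsilon$ and the fact that $L_V$ commutes with $d_\varepsilon$, so the $\mu = 0$ subspace is $d_\varepsilon$-invariant and splits according to the Hodge bi-degree. Restricted to the small eigenvalue subspace, $\partial_\varepsilon$ contributes the adjoint of $\overline{\partial}_\varepsilon$ on the dual pieces (so contributes nothing new between generators in the small eigenvalue complex at a single bidegree), and the remaining action of $d_\varepsilon$ between critical points agrees degree-by-degree with the Witten deformed Dolbeault operator $\overline{\partial}_{\Lambda^{p,\cdot},\varepsilon}$ because both matrix coefficients are determined by the same instanton data coming from the K\"ahler Morse normal form in Definition \ref{Definition_Kahler_Morse_function}. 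The Hodge bi-grading on the Witten deformed de Rham complex then follows tautologically from the direct sum decomposition on the Dolbeault side.
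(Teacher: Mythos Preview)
Your approach is essentially the same as the paper's: both identify the underlying graded Hilbert spaces via the local correspondence of Proposition~\ref{proposition_de_rham_dolbeault_local_complex_correspondence} at each critical point, and both handle the differentials by invoking the K\"ahler decomposition $d_\varepsilon = \partial_\varepsilon + \overline{\partial}_\varepsilon$.

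The paper's treatment of the differential step is somewhat cleaner than yours. Rather than appealing to instanton matrix coefficients, the paper simply observes that the Witten deformed de Rham operator is the conjugation of $d = \partial + \overline{\partial}$ by $e^{\varepsilon h}$, and that because $h$ is K\"ahler Hamiltonian this conjugation respects the Hodge bi-grading; the deformed complex therefore splits according to bidegree, and the local correspondence then identifies the pieces. Your statement that ``$\partial_\varepsilon$ contributes the adjoint of $\overline{\partial}_\varepsilon$ on the dual pieces'' is not correct as written --- $\partial$ and $\overline{\partial}$ are not adjoints of one another in the K\"ahler setting --- and the subsequent appeal to ``the same instanton data'' is vaguer than necessary once one has the conjugation-by-$e^{\varepsilon h}$ observation. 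That said, the overall architecture of your argument matches the paper's.
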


\begin{remark}
In \cite{jayasinghe2023l2} we studied the Witten deformed de Rham complexes for stratified Morse functions given by Definition \ref{definition_stratified_Morse_function}, which does not seem to capture the  K\"ahler Hamiltonian Morse functions in Definition \ref{Definition_Kahler_Morse_function} at first glance.

We observed in Remark 6.21 of \cite{jayasinghe2023l2} that we can perturb a metric within a quasi-isometry class in a neighbourhood of the critical points of $h$ and arrange that it is product type in fundamental neighbourhoods of the critical points. Similarly, given a product decomposition as in equation \eqref{equation_decomposition_tangent_cone_1} of the tangent cone equipped with a local K\"ahler Hamiltonian Morse function, where the weights are not necessarily $\pm 1$, we can rescale the metrics on the factors $C_{r_j}(Z_j)$ to see that the K\"ahler Hamiltonian function is indeed a Morse function in the sense of Definition \ref{definition_stratified_Morse_function} with respect to a deformed K\"ahler metric on the tangent cone. This shows how we have a de Rham Witten instanton complex for K\"ahler Hamiltonian Morse functions.    
\end{remark}
    
\begin{proof}
The direct sum over $p$ of the Witten deformed Dolbeault subcomplexes in the statement of the theorem have graded vector spaces in each bi-degree $(p,q)$ which are isomorphic to $\oplus_{a \in crit(h)}  \mathcal{H}^q(^p\mathcal{P}^{\mu=0}_{\alpha,B}(U_a))$. Recall that the Witten deformed complexes and the undeformed complexes are quasi-isomorphic at the local and global levels (see Subsection \ref{subsection_deformed_Hilbert_complexes}). There is a similar isomorphism between the Witten deformed de Rham complexes and the (undeformed) de Rham complexes (see \cite[\S 6.5]{jayasinghe2023l2} at the local and global levels. Since we assume that $X$ has a K\"ahler structure it is easy to see that the de Rham operator $d={\partial}+\overline{\partial}$ respects the K\"ahler structure and the Hodge bi-grading. Moreover since the Morse function is K\"ahler Hamiltonian, the Witten deformed de Rham operator respects the K\"ahler structure and the Hodge bigrading as well. Moreover we observe that the Witten deformed de Rham operator is the conjugation of $d={\partial}+\overline{\partial}$ by $e^{\varepsilon h}$, the operator for the Witten instanton subcomplexes (direct sums for the Dolbeault case) which respects the Hodge decomposition.
The local cohomology groups for the undeformed complexes are isomorphic by Proposition \ref{proposition_de_rham_dolbeault_local_complex_correspondence} and this proves the result. 
\end{proof}

This result allows us to prove rigidity, as well as formulate Morse inequalities for many invariants in terms of local de Rham cohomology groups. The following is Theorem \ref{theorem_rigidity_Poincare_Hodge}, restated to aid the reader.

\begin{theorem}
\label{theorem_rigidity_Poincare_Hodge_non_intro}
In the same setting as Theorem \ref{Theorem_instanton_correspondence_deRham_Dolbeault_non_intro}, for $E=\mathbb{C}$, $\alpha=1$ the global equivariant Poincar\'e Hodge polynomials are rigid. 
Moreover we have that 
\begin{equation}
\label{equation_Rigidity_deRham_1}
    \chi_{y,b}(\mathcal{P}^{\mu=0}_{\alpha}(X),T_{\theta})=\chi_{y,b}(\mathcal{Q}(X)):=\sum_{p,q=0}^n y^p  b^q \dim {\mathcal{H}^{p,q}(\mathcal{Q}(X))}
\end{equation}
and for fundamental neighbourhoods $U_a$ of critical points,
\begin{equation}
\label{equation_Rigidity_deRham_2}
    \chi_{y,b}(\mathcal{P}^{\mu=0}_{\alpha,B}(U_a),T_{\theta})=\chi_{y,b}(\mathcal{Q}_B(U_a),T_{\theta})):=\sum_{p,q=0}^n y^p b^q \dim {\mathcal{H}^{p,q}(\mathcal{Q}_B(U_a))}
\end{equation}
for any $\alpha$,
and these satisfy {\textit{\textbf{de Rham type Poincar\'e Hodge inequalities}}} 
\begin{equation}
\label{equation_Rigidity_deRham_3}
    \sum_{a \in crit(h)} \chi_{y,b}(\mathcal{Q}_B(U_a),T_{\theta}))=\sum_{a \in crit(h)} \chi_{y,b}(\mathcal{Q}_B(U_a),Id))=\chi_{y,b}(\mathcal{Q}(X))+(1+b) \sum_{q=0}^{n-1} R_q b^q
\end{equation}
where $R_q$ are non-negative integers.
\end{theorem}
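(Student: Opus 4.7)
The plan is to combine Proposition \ref{proposition_de_rham_dolbeault_local_complex_correspondence} (local bi-graded correspondence) with Theorem \ref{Theorem_instanton_correspondence_deRham_Dolbeault_non_intro} (global Hilbert complex isomorphism), and then read off the inequalities \eqref{equation_Rigidity_deRham_3} from the standard chain-complex argument applied to the Witten instanton subcomplex for each fixed $p$.

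First I would establish the local identity \eqref{equation_Rigidity_deRham_2}. By construction, the $\mu=0$ eigenspace is exactly the kernel of $\sqrt{-1}L_V$, so $T_\theta=e^{\sqrt{-1}\theta L_V}$ acts as the identity on $\mathcal{H}^q({}^p\mathcal{P}^{\mu=0}_{\alpha,B}(U_a))$ and its equivariant Poincar\'e--Hodge polynomial coincides with the ordinary one. On the de Rham side, the harmonic representatives produced in the proof of Proposition \ref{proposition_de_rham_dolbeault_local_complex_correspondence} are pullbacks of basic forms from the link and satisfy $L_V\phi=0$, so $T_\theta$ also acts trivially on $\mathcal{H}^{p,q}(\mathcal{Q}_B(U_a))$. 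The bi-graded isomorphism of Proposition \ref{proposition_de_rham_dolbeault_local_complex_correspondence} then upgrades to \eqref{equation_Rigidity_deRham_2} after weighting by $y^p b^q$.

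For the global identity \eqref{equation_Rigidity_deRham_1} and the rigidity claim, Theorem \ref{Theorem_instanton_correspondence_deRham_Dolbeault_non_intro} supplies a Hilbert complex isomorphism between the Witten deformed de Rham complex and $\bigoplus_p {}^p\mathcal{P}^{\mu=0}_\alpha(X)$, which descends on cohomology (using that Witten deformation is a quasi-isomorphism) to a bi-graded identification $\mathcal{H}^{p,q}(\mathcal{Q}(X))\cong \mathcal{H}^q({}^p\mathcal{P}^{\mu=0}_\alpha(X))$. Again $T_\theta$ is the identity on the $\mu=0$ part, so the equivariant Poincar\'e--Hodge polynomial on the left of \eqref{equation_Rigidity_deRham_1} reduces to the dimensional one and matches $\chi_{y,b}(\mathcal{Q}(X))$; independence of $\theta$ is exactly the rigidity statement.

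For the de Rham type inequality \eqref{equation_Rigidity_deRham_3}, I would work inside the Witten instanton subcomplex from Theorem \ref{Theorem_instanton_correspondence_deRham_Dolbeault_non_intro}. Its cochain space in bi-degree $(p,q)$ has dimension $\sum_{a\in crit(h)}\dim\mathcal{H}^q({}^p\mathcal{P}^{\mu=0}_{\alpha,B}(U_a))$, which by \eqref{equation_Rigidity_deRham_2} equals $\sum_a\dim\mathcal{H}^{p,q}(\mathcal{Q}_B(U_a))$, while its cohomology is $\mathcal{H}^{p,q}(\mathcal{Q}(X))$. For each fixed $p$, decomposing the finite-dimensional chain complex $(C^{p,\cdot},\bar\partial_\varepsilon)$ into harmonic, exact and coexact summands yields $Q_p(b)\in\mathbb{Z}_{\geq 0}[b]$ with
\begin{equation*}
\sum_q \Bigl(\sum_a \dim\mathcal{H}^{p,q}(\mathcal{Q}_B(U_a))\Bigr) b^q = \sum_q \dim\mathcal{H}^{p,q}(\mathcal{Q}(X))\, b^q + (1+b)Q_p(b).
\end{equation*}
Multiplying by $y^p$, summing over $p$, and defining $R_q$ as the coefficient of $b^q$ in $\sum_p y^p Q_p(b)$ (a polynomial in $y$ with non-negative integer coefficients) gives the second equality of \eqref{equation_Rigidity_deRham_3}; the first equality there is just the triviality of $T_\theta$ on each $\mathcal{H}^{p,q}(\mathcal{Q}_B(U_a))$ already noted.

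The main obstacle I anticipate is tracking compatibilities across the identifications: one must verify that the Hilbert complex isomorphism of Theorem \ref{Theorem_instanton_correspondence_deRham_Dolbeault_non_intro} intertwines the $T_\theta$ action and preserves the Dolbeault bi-grading, so that the $\bar\partial_\varepsilon$-subcomplex used in the Morse inequality is genuinely a sub-chain-complex for each fixed $p$. This reduces to the fact that $d=\partial+\bar\partial$ and $L_V$ both split under the K\"ahler decomposition, but a fully rigorous argument requires some bookkeeping.
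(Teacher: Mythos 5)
Your treatment of \eqref{equation_Rigidity_deRham_1}, \eqref{equation_Rigidity_deRham_2} and \eqref{equation_Rigidity_deRham_3} is essentially the paper's argument: the local and global identities come from Proposition \ref{proposition_de_rham_dolbeault_local_complex_correspondence}, Theorem \ref{Theorem_instanton_correspondence_deRham_Dolbeault_non_intro} and the triviality of $T_\theta$ on the $\mu=0$ part, and your fixed-$p$ finite-dimensional chain-complex decomposition of the Witten instanton subcomplex is just an unwinding of what the paper cites as ``the holomorphic Morse inequalities restricted to $\mu=0$''. (A small point: as you note, your error coefficients are naturally polynomials in $y$ indexed by $(p,q)$, while the statement writes $R_q$ as integers independent of $y$; your bookkeeping is the more precise one.)

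The genuine gap is in the rigidity claim. Rigidity, as used in this paper, means that the cohomology (equivalently the index contribution) of the equivariant subcomplexes with $\mu\neq 0$ vanishes, i.e.\ the \emph{full} equivariant Poincar\'e--Hodge polynomial $\chi_{y,b}(\mathcal{P}_{\alpha}(X),T_\theta)$ is independent of $\theta$. Your argument only discusses the $\mu=0$ subcomplex, on which $T_\theta$ acts trivially, and then declares ``independence of $\theta$ is exactly the rigidity statement''; but $\theta$-independence of the $\mu=0$ piece is a tautology and says nothing about the $\mu\neq 0$ eigenspaces. This is why the theorem restricts the rigidity assertion to $E=\mathbb{C}$, $\alpha=1$, while \eqref{equation_Rigidity_deRham_1} and \eqref{equation_Rigidity_deRham_2} hold for any $\alpha$: the missing ingredient is that for $\alpha=1$ the global Dolbeault cohomology groups coincide with the (Hodge bi-graded) $L^2$ de Rham cohomology of the K\"ahler Witt space (Remark \ref{Remark_minimal_domain_equivalences}). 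Combining that identification with the instanton correspondence, which shows the $\mu=0$ subcomplex alone already produces cohomology of dimension $h^{p,q}(\mathcal{Q}(X))$ in every bi-degree, forces the $\mu\neq 0$ subcomplexes to have vanishing cohomology, and only then does one obtain $\theta$-independence of the full equivariant polynomial. Without invoking the $\alpha=1$ Dolbeault--de Rham comparison, your proposal does not establish rigidity.
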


\begin{proof}
The rigidity for $E=\mathbb{C}$, $\alpha=1$ follows from Theorem \ref{Theorem_instanton_correspondence_deRham_Dolbeault_non_intro}, the definition of the global Poincar\'e Hodge polynomial, and the fact that for $\alpha=1$ the global cohomology for the Dolbeault and de Rham complexes are isomorphic as fleshed out in Remark \ref{Remark_minimal_domain_equivalences}.

Equations \eqref{equation_Rigidity_deRham_1} and \eqref{equation_Rigidity_deRham_2} follow from the definition of the Poincar\'e Hodge polynomials and Theorem \ref{Theorem_instanton_correspondence_deRham_Dolbeault_non_intro}.

The de Rham type Poincar\'e Hodge inequalities follow from the holomorphic Morse inequalities, restricted to $\mu=0$.
\end{proof}

\begin{corollary}
In the setting of Theorem \ref{theorem_rigidity_Poincare_Hodge_non_intro}, given the de Rham type Poincar\'e Hodge inequalities, we recover the de Rham Morse inequalities in \cite{jayasinghe2023l2} for $y=-1$. We get \textit{\textbf{de Rham type signature Morse inequalities}} for $y=+1$. In both cases we get formulas for the indices (Euler characteristic and Signature) when we set $b=-1$.
\end{corollary}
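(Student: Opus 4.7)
The plan is to specialize the two variables $y$ and $b$ in the de Rham type Poincar\'e Hodge inequalities \eqref{equation_Rigidity_deRham_3} and then read off each of the three claims as a polynomial identity. The input is purely the inequality
\begin{equation*}
    \sum_{a \in \mathrm{crit}(h)} \chi_{y,b}(\mathcal{Q}_B(U_a),T_\theta) = \chi_{y,b}(\mathcal{Q}(X)) + (1+b)\sum_{q=0}^{n-1} R_q b^q, \qquad R_q \geq 0,
\end{equation*}
together with the Hodge bi-grading on the local and global $L^2$ de Rham cohomology groups transferred to the de Rham side by Theorem \ref{Theorem_instanton_correspondence_deRham_Dolbeault_non_intro}.

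First I would set $y = -1$. The right-hand side becomes a polynomial in $b$ whose leading part is $\chi_{-1,b}(\mathcal{Q}(X)) = \sum_{p,q}(-1)^p b^q h^{p,q}$, to which the nonnegative error term $(1+b)\sum_q R_q b^q$ is added. To match this with the de Rham Morse inequalities proved in \cite{jayasinghe2023l2}, I would invoke Hodge symmetry $h^{p,q}=h^{q,p}$ on the K\"ahler manifold $X$ to re-index the bi-graded sum, and make the same identification locally on each $\mathcal{Q}_B(U_a)$.

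Next, for $y = +1$, the same substitution produces
\begin{equation*}
    \sum_{a \in \mathrm{crit}(h)} \chi_{1,b}(\mathcal{Q}_B(U_a),T_\theta) = \chi_{1,b}(\mathcal{Q}(X)) + (1+b)\sum_{q=0}^{n-1} R_q b^q.
\end{equation*}
Since $\chi_{1,-1}(\mathcal{Q}(X)) = \sum_{p,q}(-1)^q h^{p,q}$ is (the Hodge realization of) the signature of $X$, naming this family of inequalities the \emph{de Rham type signature Morse inequalities} is justified.

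Finally, for the index statements, I would set $b = -1$ in both polynomial identities: the factor $(1+b)$ vanishes and kills the nonnegative correction, yielding the exact equalities
\begin{equation*}
    \sum_{a \in \mathrm{crit}(h)} \chi_{y,-1}(\mathcal{Q}_B(U_a),T_\theta) = \chi_{y,-1}(\mathcal{Q}(X)).
\end{equation*}
At $y=-1$ the right-hand side equals $\sum_{p,q}(-1)^{p+q}h^{p,q}=\chi(X)$, the Euler characteristic; at $y=+1$ it equals $\sum_{p,q}(-1)^q h^{p,q}$, the signature of $X$.

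The main obstacle is the identification step at $y = -1$: one needs to verify that the bi-graded Hodge polynomial $\chi_{-1,b}$ and its local counterparts match, coefficient by coefficient in $b$, the polynomial whose positivity statement is recorded as the de Rham Morse inequality in \cite{jayasinghe2023l2}. This rests entirely on Hodge symmetry and the bi-grading transferred by Theorem \ref{Theorem_instanton_correspondence_deRham_Dolbeault_non_intro}. The remaining content of the corollary is a mechanical substitution of values.
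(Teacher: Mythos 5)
Your overall route is the paper's: specialize the de Rham type Poincar\'e Hodge inequalities \eqref{equation_Rigidity_deRham_3}, observe that the correction term carries the factor $(1+b)$ and hence dies at $b=-1$, and read off the Euler characteristic at $(y,b)=(-1,-1)$ and the signature at $(y,b)=(1,-1)$; the paper's proof is exactly this appeal to the facts that $\chi_{-1}$ and $\chi_{1}$ give the Euler characteristic and the signature, as discussed in Subsection \ref{subsection_Poincare_Hodge_polynomial}. However, the step you yourself flag as the main obstacle is not justified by the mechanism you propose. Hodge symmetry $h^{p,q}=h^{q,p}$ is not something to invoke for free for $L^2$ cohomology on a Witt space (it is part of the $L^2$ K\"ahler package the paper has to set up), and even granted, it does not convert $\chi_{-1,b}=\sum_{p,q}(-1)^p b^q h^{p,q}$ into the total-degree Morse and Poincar\'e polynomials $\sum_k b^k \dim\mathcal{H}^k$ appearing in the de Rham Morse inequalities of \cite{jayasinghe2023l2}: already for $\mathbb{CP}^2$ (which is in the setting, being toric) the former is $1-b+b^2$ while the latter is $1+b^2+b^4$. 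The identification the corollary actually rests on is Theorem \ref{Theorem_instanton_correspondence_deRham_Dolbeault_non_intro}: the local and global $L^2$ de Rham cohomologies carry the Hodge bigrading with $\dim\mathcal{H}^k=\sum_{p+q=k}h^{p,q}$, because the de Rham (Witten) complex is the direct sum over $p$ of the $\mu=0$ Dolbeault complexes; it is this bigraded refinement, recombined over bidegrees rather than re-signed by setting $y=-1$, that returns the total-degree de Rham Morse inequalities, while the role of $y=-1$ is that the index-level specialization $b=-1$ then produces the Euler characteristic.

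Second, your statement that $\chi_{1,-1}=\sum_{p,q}(-1)^q h^{p,q}$ ``is (the Hodge realization of) the signature'' is precisely the nontrivial input in the singular setting: it is the Hodge-index-type statement for $L^2$ cohomology of K\"ahler Witt spaces discussed in Subsection \ref{subsection_Poincare_Hodge_polynomial}, and it should be cited rather than asserted. The remainder of your argument (the vanishing of the error term at $b=-1$ and the resulting exact local-to-global formulas for $\chi(X)$ and the signature) is correct and coincides with the paper's.
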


\begin{proof}
These follow from the fact that $\chi_{-1}$ and $\chi_1$ give the Euler characteristic and the signature invariant, as discussed in Subsection \ref{subsection_Poincare_Hodge_polynomial} and Theorem \ref{theorem_rigidity_Poincare_Hodge_non_intro}.  
\end{proof}

We refer to section 7.2.9 of \cite{jayasinghe2023l2} for conditions where the Morse function is perfect. Then the error polynomials vanish.

\begin{remark}[Generalizations]
\label{Remark_orientation_signature_Hamiltonian}
The above corollary generalizes equation (42) of \cite{witten1982supersymmetry} in our singular K\"ahler setting. 
We observe that for general actions there needs to be a sign accounting for the co-orientation of the normal fibers to the critical points (see also \cite[\S 2]{Donghooncircleactions_2018}) which is accounted for by the local Hodge $(p,q)$ degree in our setting. 
\end{remark}

\begin{remark}[Related versions and History]
\label{Remark_signature_Witten_Zhang_proofs}
The ideas used in the proof of Theorem \ref{theorem_rigidity_Poincare_Hodge_non_intro} can be used to construct de Rham type Morse inequalities for some other complexes that can be expressed using $L^2$ hodge numbers $h^{p,q}$, when there is a K\"ahler Hamiltonian Morse function, including the self-dual and anti-self dual complexes, where the indices are given by $1/2(\chi_{-1} \pm \chi_1)$ (see \cite[\S 7.2.8]{jayasinghe2023l2} for more details).
When there are K\"ahler isometric group actions, one can still derive the formulas corresponding to $b=-1$ for the equivariant indices.

In \cite[\S 3]{witten1982supersymmetry} Witten discusses how the formula above for the de Rham type signature Morse inequalities for $b=-1$ can be derived using a deformation when given a Killing vector field with isolated zeros, even on manifolds without K\"ahler structures. This can be done in the singular setting as well using the methods we have developed in this article.
In chapter 4 of \cite{Zhanglectures} Zhang derives the case for $y=-1, b=-1$ for arbitrary vector fields with isolated zeros, the Poincar\'e Hopf theorem, which is also sketched by Witten in \cite[\S 3]{witten1982supersymmetry}. Implicit in Proposition 4.10 of \cite{Zhanglectures} is a $\mathbb{Z}_2$ graded Witten instanton complex for the de Rham complex.

\end{remark}

\begin{example}[de Rham type Poincar\'e Hodge polynomial of a conifold]
\label{example_conifold_Poincare_poly}
We build on the example we studied in \cite[\S 7.3.4]{jayasinghe2023l2}.
Consider the quadric $Y_1 Y_4 - Y_2 Y_3 =0$ in $\mathbb{CP}^4$ with coordinates $[W:Y_1:Y_2:Y_3:Y_4]$. The zero set of this polynomial is a conifold which we shall call $X$. 
This space has a $(\mathbb{C}^*)^3$ algebraic toric action, $(\lambda, \mu, \gamma) \cdot [W: Y_1: Y_2: Y_3: Y_4] = [W: \lambda Y_1:\mu\gamma Y_2: \mu^{-1}\lambda Y_3: \gamma Y_4]$.
It is easy to verify that generically there are 5 fixed points on $X$, given by $[1:0:0:0:0]$, $[0:1:0:0:0]$, $[0:0:1:0:0]$, $[0:0:0:1:0]$, $[0:0:0:0:1]$. The last four fixed points are smooth and the first is singular.

The arguments in \cite{jayasinghe2023l2} shows that if we pick $\gamma=se^{i\theta}$, $\mu=\gamma^2$ and $\lambda=\gamma^4$, where $s<1$, the action on the chart $W=1$ corresponds to a Hamiltonian function where the singular fixed point is attracting. The fundamental neighbourhood is a cone over a link $S^2 \times S^3$, and this contributes an element in the local cohomology of bi-degree $(1,1)$, and an element in degree $0$ of bi-degree $(0,0)$. 

The smooth fixed points each have a one dimensional local cohomology group with elements in bi-degrees $(1,1)$ at $[0:0:0:0:1]$, $(2,2)$ at $[0:0:0:1:0]$ and at $[0:0:1:0:0]$, and $(3,3)$ at $[0:1:0:0:0]$.
It is easy to see that the \textbf{Poincar\'e Hodge polynomial} is $1+2by+2b^2y^2+b^3y^3$. In particular the \textbf{signature} is $0$  (given by setting $b=-1, y=+1$ for this 6 dimensional space.
\end{example}

\subsection{NUT charge and signature}
\label{subsection_Nut_charge_signature}

In \cite{gibbons1979classification}, Gibbons and Hawking study the Euler characteristic and signature, and their equivariant versions on 4 dimensional blackhole spacetimes, in particular using equivariant computations at \textit{nuts} (isolated fixed points) and \textit{bolts} (2 dimensional fixed point sets) when there are Killing vector fields to give formulas for gravitational action functionals and entropy. More generally on $d$ dimensional spaces with time-like Killing vector fields, the fixed point sets give rise to entropy (these are the obstructions to foliating the spacetimes with constant imaginary time) which include blackhole event horizons.
The $d-2$ dimensional fixed point sets are also called bolts in \cite{hunter1998action}, and their NUT charge is computed, along with contributions from other important geometric objects. Under the assumption that the spacetime has a symmetry expressed by a Killing vector field, physical quantities of interest in gravitation which are geometric invariants will respect the symmetry and equivariant localization can be used to compute many of them.

The topic is of great interest in mathematics and physics with many singular spaces arising naturally in its study. The smooth manifolds in \cite{gibbons1979classification} are obtained by compactifying them in Kruskal coordinates, with careful choices of coordinates in order to obtain a smooth manifold, and other choices of compactification lead to stratified pseudomanifolds where \textit{angles} at singularities correspond to black hole temperatures (see page 1 of \cite{de2023comments}).

More recent work in \cite{aksteiner2023gravitational} uses techniques and results in \cite{Donghooncircleactions_2018} which crucially use rigidity. Any extension of the study of gravitational instantons with conic singularities (e.g., \cite{toricgravitationalinstantonsBiquardGauduchon_2023}) will no doubt require equivariant signature theorem`s', with the correct choice of cohomology necessary to obtain equivariant computations for the corresponding choices of domains for operators involved, and for corresponding choices of integrals. Fixed point computations at nuts and bolts with more general localization computations are now studied widely on smooth manifolds and orbifolds in physics (see \cite{Equivariant_loc_Sparks_2024} where they are used in studying supergravity theories), and extending these to more singular spaces leads to many interesting questions. We discuss how this article extends other computations (such as in \cite{gupta2015supersymmetric}) related to the study of blackhole entropy in Subsection \ref{subsection_background_related_results}.

In this subsection we will show how some formulas for NUT charges and the signature given in \cite{gibbons1979classification} can be generalized to stratified pseudomanifolds with wedge K\"ahler metrics with K\"ahler Hamiltonian Morse functions, with plausible generalizations to spaces with local complex and almost complex structures near fixed points of a Killing vector field preserving the relevant structures using work in \cite[\S 7]{jayasinghe2023l2}, and beyond that using eta invariant based formulae for the equivariant signature on spaces with wedge metrics.
We study it using the Dolbeault complex with the minimal domain ($\alpha=1$). For geometric Witt spaces, the signature operator is essentially self-adjoint. 

We begin with equation (4.9) of \cite{gibbons1979classification}, which is derived for $S^1_{2\theta}$ actions on gravitational instantons where there is an imaginary time-like Killing vector field. The weights of the action at fixed points encode the surface gravity of black holes. This is simply the formula for the equivariant signature of an oriented 4 manifold with a Killing vector field $V$, with nut and bolt fixed points. For such a space with only nuts, we have that the signature can be written as
\begin{equation}
\label{equation_signature_3}
    \sum_{a \in zeroes(V)} \tau_3(p_a,q_a)=\tau
\end{equation}
where $\tau_3(p_a,q_a)=1/3(p_a q_a^{-1}+q_a p_a^{-1})$, where $p_a,q_a$ are relatively prime integers such that $\kappa_1^{-1}\kappa_2=q_a/p_a$ for weights $\kappa_1,\kappa_2$ of the local Hamiltonian corresponding to the circle action at the isolated fixed points. We use the notation $\tau_3$ since this is the third local quantity which we can define, that can be added up at fixed points. \textbf{\textit{We emphasize that this technique works only because the equivariant signature is rigid.}}

These weights are also the surface gravities (see equations (3.7), (3.8) of \cite{gibbons1979classification}). From now on we will make the simplifying assumption that the weights at the fixed points are $\kappa_1=p_a$ and $\kappa_2=q_a$.
Equation \eqref{equation_signature_3} is derived by taking the Atiyah-Bott-Lefschetz fixed point formula for the equivariant Signature, expanding it in powers of $\theta$, and using rigidity to write the global equivariant signature as the sum of the coefficients of the $\theta^0$ terms in the expansion at each fixed point. Equation (4.8) of \cite{gibbons1979classification} is derived by equating the coefficients of the $\theta^{-2}$ terms in the expansion, and for isolated fixed points yields
\begin{equation}
\label{equation_Nut_charge_vanishing}
    \sum_{a \in Zeroes(V)} N(p_a,q_a)=0 
\end{equation}
where for smooth fixed points $a$, the contribution is given by $N(p,q)=-(pq)^{-1}$.
The quantity $N(p,q)$ gives the NUT charge of the fixed points up to a constant, derived in \cite[\S 5]{gibbons1979classification}
We note that there are different conventions of overall constants chosen for the NUT charge (compare equations (5.9) and (5.10) of \cite{gibbons1979classification} with the formulas in Proposition 4.5 of \cite{aksteiner2023gravitational}), with more factors appearing when studying entropy and action to account for this.

\begin{remark}[The issue of orientation]
\label{Remark_orientation_issues}
While equation (4.8) of \cite{gibbons1979classification} is derived correctly using the Atiyah-Bott-Lefschetz fixed point theorem, the subsequent derivation of the NUT charge of nuts and bolts in section 5 of that article lead to equations (5.9) and (5.10) of that article, where the co-orientation of the bolts is not accounted for in equation (5.10). This leads to a sign error in equation (6.9) of that article, reappearing in equation (6.14).

The co-orientation is correctly accounted for in the proof of Proposition 4.5 of \cite{aksteiner2023gravitational}.
We refer the reader to \cite{Orientation_Morse_Bott_Rot_2016} and \cite[\S 4]{witten1982supersymmetry} for discussions on similar issues with picking orientations in Morse-Bott inequalities and equivariant de Rham type signature theorems (see also Remark \ref{Remark_orientation_signature_Hamiltonian}).

We emphasize the importance of these signs in physical interpretations of the computed quantities which have lead to deep philosophical questions in versions of the theory (see \cite[\S 3]{de2023comments} for details and references). 
\end{remark}

\begin{comment}
\begin{remark}[Anecdote]
    The word signature is overloaded in this topic, used to describe the signature operator and related quantities, signatures of Euclidean and Minkowski spacetimes and puzzles associated to various signs (see \cite[\S 3]{de2023comments}). It is completely NUTs.  
\end{remark}
\end{comment}

Equation (6.14) in \cite{gibbons1979classification} is followed by the remark that the entropy computation has contributions from the \textit{NUT charges} of both nuts and bolts, which cannot be attributed to individual nuts and bolts due to a translational freedom, but makes sense since the total value is invariant since the ``total sum of the NUT charges is zero for a compact manifold". However the contributions in equations (6.9), (6.14) of that article only vanish once the sign error discussed in the above remark is fixed. Moreover, the coordinate computations in both \cite{gibbons1979classification} and \cite{aksteiner2023gravitational} show that the Nut charge is actually the evaluation of a characteristic polynomial of the curvature, which vanishes globally. 

That the right hand side of equation \ref{equation_Nut_charge_vanishing} (the total NUT charge) vanishes can be deduced as a consequence of observation that the series expansion in $\theta$ of the global equivariant signature $\tau(\theta)$ cannot have powers $\theta^k$ for $k < 0$ since it is a smooth function in $\theta$, including at $\theta=0$ where $\tau(0)$ is an integer. \textit{\textbf{This is weaker than rigidity and maybe useful for deriving similar results in cohomology theories (such as for singular cohomology, using the formulas of \cite{baum1979lefschetz}) where the equivariant index is not rigid.}}
This vanishing is also related to an observation by Bott (see page 244 of \cite{bott1967vector}) of a vanishing phenomenon for a modification of the Bott-residue formula for certain equivariant polynomials in curvature, which he observes holds for the $L$ polynomials.
We refer to related formulas in Theorem 3.2, Theorem 3.3, Corollary 3.4, Lemma 3.5 of \cite{Donghooncircleactions_2018}, and results in \cite[\S 3.2]{aksteiner2023gravitational} which are derived as consequences of rigidity. 

We now extend formulas \eqref{equation_signature_3} and \eqref{equation_Nut_charge_vanishing} for the two four dimensional spaces that we discussed earlier in this section and discuss some features.

For the singular fixed point $a_3$ of the action on the algebraic surface in Example \ref{example_quadric_Morse_inequalities}, we can set $\lambda^2=e^{ip(2\theta)}$ and $\mu^2=e^{iq (2\theta)}$ to expand the local equivariant $\chi_1$ invariant given in Example 7.2.8 of \cite{jayasinghe2023l2} to obtain $N(p,q)=\frac{-1}{2} (pq)^{-1}$, and $\tau_3(p,q)=\frac{1}{2}(\frac{1}{3}(pq^{-1}+qp^{-1}))$. We observe that these differ from the corresponding quantities in the smooth setting by a factor of $1/2$, consistent with the fact that the singularity is an orbifold singularity with local covering degree $2$.

For the singular fixed point $a_3$ of the action on the algebraic surface in Example \ref{example_singular_calabi_yau}, we can set $\lambda/\mu=e^{ip(2\theta)}$ and $1/\mu^4=e^{iq (2\theta)}$ to expand the local equivariant $\chi_1$ invariant given in \cite[\S 7.3.6]{jayasinghe2023l2} to obtain $N(p,q)=-(pq)^{-1}$, and $\tau_3(p,q)=(\frac{1}{3}(pq^{-1}+qp^{-1}))$ similar to the smooth setting. The tangent cone at the singularity is given by a product of a smooth disc and a cone over a $T^{3,4}$ knot with angle $6\pi$. The reason the formula is similar to that in the smooth setting is because we pick the equivariant $\chi_1$ invariant corresponding to Dolbeault complex with the minimal domain ($\alpha=1$) to get the correct expansion.

For the singular fixed point $a_1$ which is a non-orbifold singularity, we can set $\lambda^4=e^{ip(2\theta)}$ and $\mu^4=e^{iq(2\theta)}$, to expand the local equivariant $\chi_1$ invariant to obtain $N(p,q)=-3(pq)^{-1}$ and $\tau_3(p,q)=1/2-(1/4)(pq^{-1}+qp^{-1})$.

\begin{remark}[$N(p,q)$ is independent of the choice of domain]
\label{Remark_equivariant_volumes}
The quantity $N(p,q)$ and its generalizations in higher dimensions can be computed using the equivariant signature. We observe that equation \eqref{equation_Nut_charge_vanishing} corresponds to Corollary 3 of \cite{bott1967vector}, which shows one way in which the Bott residue theorem generalizes the classical residue theorem. In the smooth and orbifold cases $N(p,q)$ corresponds to the Pfaffian of matrices corresponding to group actions at fixed points.
Since the local Lefschetz numbers for the Signature operator for various choices of Fredholm domains (and the Baum-Fulton-Quart versions of $\chi_1$ invariants) differ by finite sums of powers of $e^{i\theta}$ (which have power series expansions in $\theta$ which have no negative power terms), the coefficient of the term with lowest power $\theta^{-n}$ of the expansion of these local Lefschetz numbers in powers of $2\theta$ are the same.

%In the smooth setting it is well known that these correspond to the volumes of the links at a unit distance from the point on the tangent cones, and plays a key role in local computations at fixed points when applying the Bott residue theorem. It is also related to equivariant volumes via the Duistermaat-Heckman theorem.
\end{remark}

\begin{remark}[Formulas for other cohomology theories]
\label{Remark_formulas_NUTs_other_cohom_theory}
It is easy to observe that if one chooses other cohomology theories, the formulas may change for general singular spaces, and it is not clear how to define an analog of $\tau_3$ without rigidity. If there is a Hilbert complex of the type we study, corresponding to a self-adjoint extension of the Dolbeault complex on $X^{reg}$, then the choice of domain will dictate these changes and will be related to the propogators corresponding to the Dirac-type operator. For theories such as those for which we discussed conjectural holomorphic Morse inequalities in Subsection \ref{subsubsection_conjectural_inequalities}, considerations such as those discussed in Remark \ref{Remark_considerations_integrals_cup_products} apply. 
\end{remark}

\subsection{Spin-Dirac complex and fractional powers of canonical bundles}
\label{subsubsection_spin_Morse_inequalities}

It is well known that a spin structure on a smooth space $X$ with a complex structure corresponds to a square root of the canonical bundle $K$ when it exists. That is we have a line bundle $L$ on $X$ such that $L^{\otimes 2}=K$, and a spin Dirac operator 
\begin{equation}
\label{identified_complex_spin}
    D=\sqrt{2}(\overline{\partial}_L+\overline{\partial}_L^*) : \bigoplus_{q \text{  even}} (\Omega^{0,q}(X;L)) \rightarrow \bigoplus_{q\text{  odd}} (\Omega^{0,q}(X;L)),
\end{equation}
(see for instance Theorem 2.2 of \cite{hitchin1974harmonic}, Section 3.4 of \cite{friedrich2000dirac}).
We denote the associated elliptic complexes as $\mathcal{S}=(L^2\Omega^{0,q}(X;L), \mathcal{D}_{\alpha}(P),P)$ where $P=\overline{\partial}_L$.

\begin{remark}[Self duality of spin complex]
\label{remark_self_dual_spin_serre}
It is easy to see that the complex $\mathcal{S}$ is a \textit{\textbf{self dual complex under Serre duality}} since $L^{-1} \otimes L^{\otimes 2}= L$.
\end{remark}

In the smooth setting, a square root of the canonical bundle $K$ only exists when the first Chern class is even. Hirzebruch studied generalizations of this when $C_1(X)=0 \text{ mod } (N)$, considering the Dolbeault cohomology of the complex twisted by a complex line bundle $L^{\otimes k}$ such that $L^{\otimes N}=K$, for integers $0 < k < N$ (see \cite{hirzebruch1988elliptic}), relating it to elliptic genera of level $N$ (c.f. \cite{DessaiToric16})

The following result is an analog of Proposition 2.1 of \cite{atiyah1970spin}, which allows one to drop the assumption that the action lifts to the bundles that we used in the proof of the Morse inequalities for the bundles that we consider here.

\begin{proposition}
\label{Proposition_action_lifts_to_lifted_bundle}
Let $X^{2n}$ be a resolution of a compact pseudomanifold, equipped with a K\"ahler wedge metric on which a compact connected Lie group $G$ acts effectively, preserving the K\"ahler structure. Let $L$ be a choice of line bundle such that $L^{\otimes N}=K$, where $N>1$ is an integer. Then there exists canonically a Lie group $G_1$ and a homomorphism $H:G_1 \rightarrow G$, which is either the identity or a covering such that $G_1$ acts on $L$ inducing via $H$ the given action of $G$ on $X$.
\end{proposition}

\begin{proof}
We follow the proof of Proposition 2.1 of \cite{atiyah1970spin}, adapting it to the singular K\"ahler setting. The K\"ahler wedge metric on $X$ induces a canonical Hermitian metric on $Q$, by which we denote the principal tangential $U(n)$ bundle of $X$ (i.e., the unitary frame bundle on $X$). 

This is defined by the canonical Riemannian connection (Chern-connection) on $Q$, together with the invariant metric on $U(n)$.

The determinant map is a submersion from $U(n)$ to $U(1)$, and the  invariant metric on $U(n)$ induces an invariant metric on $U(1)$ by the pushforward of the determinant map. We denote by $\widetilde{Q}$ the principal $U(1)$ bundle which is the determinant bundle of $Q$. This then has a natural metric induced by the one on $Q$.

If the holomorphic wedge tangent bundle is the bundle associated to $Q$ with representation $r: U(n) \rightarrow \mathbb{R}^{n} \otimes \mathbb{C}$, then the canonical bundle is the line bundle associated to $\widetilde{Q}$ given by the representation $det(r): U(1) \rightarrow \mathbb{C}$.

The bundle $L$, when it exists, corresponds to a choice of $N$-th root, $s: x \mapsto x^{1/N}$ from $U(1)$ to an $N$ fold covering of $U(1)$, which is diffeomorphic to $U(1)=S^1$. This yields a further principal $U(1)$ bundle $Q_1$ which yields the associated line bundle $L$ via the representation $s \circ det(r): U(1) \rightarrow \mathbb{C}$.

\begin{remark}
    In the case where $N=2$, this choice of squareroot corresponds precisely to the choice of a spin structure (see the proof of Theorem 2.2 of \cite{hitchin1974harmonic}, and that of Proposition 2.1 of \cite{atiyah1970spin}).    
\end{remark}

There is a canonical metric on $Q_1$ as well. The map $s$ has an inverse $s^{-1}: x \rightarrow x^N$ which is a covering map which we can use to pullback the invariant metric on $U(1)$ in $\widetilde{Q}$ to $Q_1$.
Any $g \in G$ induces diffeomorphisms $g_Q: Q \rightarrow Q$ and $g_{\widetilde{Q}}:\widetilde{Q} \rightarrow \widetilde{Q}$, and $g_{Q_1}: Q_1 \rightarrow Q_1$. These are isometries since the metrics on $Q, \widetilde{Q}, Q_1$ are canonically associated to the metric on $X$. 

For any element of the Lie algebra $L(G)$, the corresponding infinitesimal isometry is a vector field on $\widetilde{Q}$ which preserves the strata on the base $X$, which commutes with the action of $L(U(1))=\mathbb{R}$. Lifting this vector field to $Q_1$, we obtain an infinitesimal isometry of $Q_1$, which commutes with the action of $\mathbb{R}$ which is the Lie algebra of the Lie groups $U(1)$ on both $\widetilde{Q}$ and $Q_1$. Thus we have an infinitesimal action of $L(G)$ on $Q_1$ commuting with $U(1)$. Exponentiating gives an action of $\widetilde{G}$, the universal cover of $G$, on $Q_1$ commuting with $U(1)$. Let $G_1$ be the image of $\widetilde{G}$ that acts on $Q_1$ modulo the stabilizer. We have a homomorphism $H: G_1 \rightarrow G$ and the kernel is of order $d$ such that $d$ divides $N$.
\end{proof}

The following result (restated as Theorem \ref{Theorem_harmonic_spinors_vanish} in the introduction) is a strengthening of vanishing of index results, including those in \cite{atiyah1970spin}, \cite[\S 6]{DessaiToric16}, in the case where the group actions are isometric K\"ahler Hamiltonian. In the case when $N=2$ and $k=1$, this corresponds to the vanishing of harmonic spinors.

\begin{theorem}
\label{Theorem_harmonic_spinors_vanish_non_intro}
Let $X$ be a \textbf{smooth} K\"ahler manifold with a Hermitian line bundle $L$ such that $L^{\otimes N}=K_X$, where $N>1$ is an integer. Let $E=L^{k}$, where $0< k <N$ is an integer. We consider the complex $\mathcal{P}=(L^2\Omega^{0,q}(X;L), \mathcal{D}_{\alpha}(P),P)$ where $P=\overline{\partial}_E$.

If there is a K\"ahler action of a compact connected Lie group G with isolated fixed points (with at least one fixed point), then the cohomology of the Dolbeault complex twisted by the bundle $E$ vanishes in all degrees. In particular the twisted Dirac operator is invertible.
\end{theorem}

\begin{proof}
We use the holomorphic Morse inequalities for the action. By Proposition \ref{Proposition_action_lifts_to_lifted_bundle} above, there will always be a lift of the action to $L$, and thus to $E$. By Remark \ref{remark_density_of_circle_actions}, it suffices to prove this for a circle action with isolated fixed points.

We see that for a fixed point of the action $a$ with fundamental neighbourhood $U_a$, the local Morse polynomial is given by equation \eqref{equation_Naki_Mayina}, which reduces to
\begin{equation}
    b^{n_a} \prod_{\gamma_j^a >0} \frac{e^{i\gamma_j^a \theta (k/N)}}{1-e^{i\gamma_j^a \theta}} \prod_{\gamma_j^a <0} \frac{e^{i(N-k)|\gamma_j^a| \theta/N}}{1-e^{i|\gamma_j^a| \theta}}
\end{equation}
and the local Morse polynomial of the dual complex is given by
\begin{equation}
    b^{n-n_a} \prod_{\gamma_j^a >0} \frac{e^{-i(N-k)|\gamma_j^a| \theta(k/N)}}{1-e^{-i|\gamma_j^a| \theta}} \prod_{\gamma_j^a <0} \frac{e^{-i\gamma_j^a \theta/N}}{1-e^{-i\gamma_j^a \theta}}.
\end{equation}
This is because the trace, $E_{a}(\theta)$ is the trace of the action over the appropriate roots of the section of the canonical bundle $dz_1\wedge dz_2 \wedge... \wedge dz_n$, for the complex and the dual complex.

We use $\lambda_j:=e^{i\gamma_j^a \theta}$ for brevity. 
Now the result is a consequence of the fact that the corresponding \textit{classical} Morse polynomial is $0$. 
This follows from the observation that the Laurent expansions of the contributions at fixed points for $\textit{Im}(\theta)>0$ have only terms $e^{ic\theta}$ with $c>0$, while the expansions with $\textit{Im}(\theta)<0$ have only terms with $c<0$.

This implies that the cohomology must vanish identically in all degrees, and the Dirac-type operator is invertible.
\end{proof}

The result of \cite{atiyah1970spin}, that the $\widehat{A}$ genus vanishes on spin manifolds with circle actions, holds even for spaces such as tori which have circle actions which are not Hamiltonian. It is well known that the 2 torus has 4 spin structures, one of which corresponds to the trivial bundle which has non-vanishing cohomology, even though the index of the spin Dirac operator vanishes. This is a space which does not admit positive curvature metrics. On the other hand given Fano manifolds that admit positive curvature metrics, the Lichnerowicz estimates show that harmonic spinors vanish. The distinction between group actions having fixed points vs those that don't is akin to the difference between positive curvature and flat metrics.

\begin{remark}[Generalization for complex manifolds with Bialynicki-Birula decompositions]
It is easy to see that this holds even when the action is not K\"ahler, albeit under the assumption that there exists a Bialynicki-Birula decomposition where the Morse inequalities will hold by \cite{wu2003instanton}.
\end{remark}

Most of the corollaries in \cite{atiyah1970spin} have analogs for K\"ahler Hamiltonian actions that we study here. This is interesting since one can find such bundles for any smooth K\"ahler manifold, even when the first Chern class is not even. We provide a sample of such a corollary for demonstration.

\begin{corollary}
If $X$ is a K\"ahler manifold with harmonic spinors, or with cohomology in the Dolbeault complex twisted by a bundle $K^{k/N}$ for integers $N>k>1$, then it does not admit any K\"ahler actions of compact connected Lie groups, with isolated fixed points and at least one fixed point. In particular, if there are compact K\"ahler Lie group actions on a K\"ahler manifold admitting spinors, the $L^2$ Euler characteristic is zero, or there are non-isolated fixed points.
\end{corollary}

\begin{proof}
Under the assumptions, if there were a compact K\"ahler Lie group action with isolated fixed points, it would contradict the Theorem above. If it does admit a Lie group action and there are no fixed points, then by the $L^2$ de Rham Lefschetz fixed point theorem, the Euler characteristic is $0$.
\end{proof}

The main result of \cite{atiyah1970spin} does not assume that the fixed points are isolated.
More can be said for the case of non-isolated fixed points in the smooth setting using the results in \cite{wu1998equivariant}, which we leave to the interested reader.

In Example 7.29 of \cite{jayasinghe2023l2}, we computed that the Spin Lefschetz number for the singular quadric (the normal variety which is a complete intersection hypersurface in $\mathbb{CP}^3$ that we studied in Example \ref{example_quadric_Morse_inequalities}) is $0$. 
It is easy to use the Morse inequalities of this article to see that there are no harmonic spinors in that example. We will next prove a result which generalizes this observation.

We observed in \cite[\S 7.2.3]{jayasinghe2023l2} that for normal algebraic varieties with vanishing higher local cohomology of fundamental neighbourhoods of fixed points with the choice of domain for the complex corresponding to an attracting fixed point, the local cohomology of the minimal domain of the $\overline{\partial}$ operator is in one to one correspondence with the local ring, using it to match the local and global Lefschetz numbers in the $L^2$ and \cite{baum1979lefschetz} settings in Proposition 7.8 of that article.

We also studied how the local cohomology of the Spin Dirac complexes of complete intersection hypersurfaces with canonical singularities admit a presentation as a module over the holomorphic functions generated by a square root of the Poincar\'e residue, (see \cite[Theorem 1.1]{Weber_Residueforms_2005} c.f. \cite[\S 7.3.3]{jayasinghe2023l2}). Since the Baum-Fulton-MacPherson theory matches the $L^2$ theory for such normal varieties, the dual local cohomology also matches, and the canonical bundle has the same local trivializing harmonic sections given by the Poincar\'e residue. 

Given the vanishing set $X$ of a polynomial $G$ in $\mathbb{C}^n$, the Poincar\'e residue is 
\begin{equation}
\label{equation_poincare_residue}
    \omega=Res_{\mathbb{C}^n|X} \frac{dz_1 \wedge dz_2 \wedge...\wedge dz_n}{G} = \frac{dz_2 \wedge...\wedge dz_n}{\partial G/\partial {z_1}}= \frac{dz_1 \wedge dz_3 \wedge...\wedge dz_n}{\partial G/\partial {z_2}}=...
\end{equation}
and we refer to \cite{reid2012val,YauBong2013completeintersections} for more details (see also \cite[\S 7.3.3]{jayasinghe2023l2}). In this setting we can generalize Theorem \ref{Theorem_harmonic_spinors_vanish_non_intro}.

\begin{theorem}[Vanishing of harmonic spinors]
\label{Theorem_harmonic_spinors_vanish_singular}
Let $X^{2n-2}$ be a complete intersection complex hypersurface in $\mathbb{CP}^n$ with a K\"ahler $\mathbb{T}^{n-1}$ action with isolated fixed points, that descends from a torus action on $\mathbb{CP}^n$. We assume that $X$ is the vanishing set of a single degree $d$ polynomial $f$ in $\mathbb{CP}^n$, a normal algebraic variety with canonical singularities, has local cohomology at fixed points concentrated in a single degree, and $n+1-d \geq 2$.
Then we have that the index of the spin Dirac operator for the domain corresponding to $\mathcal{D}_{1}(\overline{\partial})$ vanishes.
If in addition the torus action is K\"ahler Hamiltonian on $X$ with isolated singularities, then the spin Dirac operator is invertible for the domain corresponding to $\mathcal{D}_{1}(\overline{\partial})$.
\end{theorem}

The assumption that the local cohomology at fixed points is concentrated in a single degree, is equivalent to saying that the local cohomology for the Dolbeault complex corresponding to the trivial bundle and $\alpha=1$ for attracting fixed points corresponds to the holomorphic functions on the fundamental neighbourhood.

In the smooth setting, the condition $(n+1-d)>0$ corresponds to Fano manifolds which admit positive curvature metrics, in which case the result follows from the Lichnerowicz formula.

\begin{proof}
We can write any torus action on $\mathbb{CP}^n$ as
\begin{equation}
    [Z_0:Z_1:...:Z_n] \mapsto [Z_0: e^{i\theta_1}Z_1:...:e^{i\theta_n}Z_n].
\end{equation}
by taking the compactification of $\mathbb{C}^n$ with complex coordinates $z_k=Z_k/Z_0$ where the torus action can be diagonalized to act by $z_k \rightarrow e^{i\theta_k}z_k$.
The assumption that there are only canonical singularities implies that the canonical bundle has a local trivializing harmonic section generated by Poincar\'e residues on each chart (see \cite[Theorem 1.1]{Weber_Residueforms_2005}). Say that the polynomial $f$ has a monomial term $Z_0^{c_0} Z_1^{c_1} ... Z_n^{c_n}$ for $c_j \in \mathbb{N} \cup \{0\}$.

Since the torus action has fixed points only on the origins of the charts $Z_k=1$ which lie on the vanishing set of the polynomial, we need to study the local cohomology groups of the twisted Dolbeault complex at these points. The squareroot of the canonical bundle is locally trivial, and is trivialized by a squareoot of the section $s^2_k$ given by the Poincar\'e residue.
The trace of $T_{\theta}$ over these local cohomology groups then factors into the trace over the local section $s_k$ and the trace over the local holomorphic function for attracting factors, and we can compute over the expanding factors using duality arguments.

With the form of the monomial that we assumed, the Poincar\'e residue over the chart $Z_k=1$ is given by $s_k^2=Res_{\mathbb{C}^n|X} \eta$ where
\begin{equation}
    \eta= \frac{dZ_0 \wedge dZ_1 \wedge... dZ_{k-1}\wedge dZ_{k+1} \wedge... \wedge dZ_n}{Z_0^{c_0} Z_1^{c_1} ...Z_{k-1}^{c_{k-1}} Z_k^{c_{k}} Z_{k+1}^{c_{k+1}}... Z_n^{c_n}}  Z_k^{d-n},
\end{equation}
the trace of the action over which is given by $e^{i\kappa}$ where 
\begin{equation}
    \kappa=\sum_{j=1}^n \theta_j (1-c_j) +\theta_k (-1+d-n)
\end{equation}
since the trace over the Poincar\'e residue is the same as that over the form $\eta$ on $\mathbb{C}^n$ by the action.

If $f$ has only one monomial, then such a hypersurface is smooth and the result follows from Theorem \ref{Theorem_harmonic_spinors_vanish_non_intro}.
Thus we can assume that there are at least two distinct monomials in $f$, at least one of which therefore has some $c_v \geq 2$ for some $v \in \{0,...,n\}$. 
Since $\sum_{j=0}^n c_j=d$, and $n+1-d \geq 2$ the pigeonhole principle shows that there are at least two numbers $l$ such that $c_l=0$. Let us pick one such $l$.
We see that on charts $Z_k=1$ for $k \neq l$, the trace of the geometric endomorphism $T_{g}$ for $g \in G$ corresponding to the action over the local section 
$s_k^2$ contributes a factor $e^{im\theta_l}$ where $m=1-c_l=1>0$. The trace over $s_l^2$ contributes a factor $e^{im\theta_l}$ where $m=1-c_l-1+d-n=d-n<0$ when $n+1-d>1$.

On the chart where $Z_l=1$, the local holomorphic functions are power series in products of $Z_k/Z_l$ for $k \neq l$, which shows that the trace of the action over the local holomorphic functions only contribute factors $e^{im\theta_l}$ where $m \leq 0$.
For the local cohomology of the twisted complex $\mathcal{S}$, from our observations for the Poincar\'e residue, and for the section given by its square root, we see that there are only factors $e^{im\theta_l}$ where $m > 0$ where we use the self-duality under Serre duality property and the fact that the local Morse polynomial is computed in the local cohomology groups corresponding to which the traces converge as power series in the variable $\lambda_l=se^{i m\theta_l}$ for $s<1$.

On all other charts, we see by our discussion and arguments similar to that above, that the traces over sections in the local cohomology groups of the twisted complex contribute factors $e^{im\theta_l}$ where $m>0$.

Since there are at least two numbers $l$ for which $c_l$ vanish, the positivity/negativity of exponents of powers of $e^{i\theta_l}$ discussed above hold for two values of $l$ between $0$ and $n$. 
The space $X$ is a hypersurface there is only a $\mathbb{T}^{n-1}$ action. Since we have two numbers $l$ with the required positivity properties, we see that there the restricted torus action on $X$ still has at least one variable $\lambda_l=e^{i\theta_l}$ which has the required positivity properties.

Since the local Morse polynomials only have positive powers of $\lambda$, self-duality under Serre duality shows us that the dual local Morse polynomials only have negative powers of $\lambda$ appearing in the expansion (and that series converges for $s>1$). Thus the classical Morse polynomial is identically $0$. This proves the result when the action is a K\"ahler Hamiltonian with isolated fixed points.

When one only has a complex action, we only have the Lefschetz fixed point theorem. We saw in \cite[\S 7.2.3]{jayasinghe2023l2} that under our assumptions we can sum the formal series and write the Lefschetz number as a rational function in the torus variables $\lambda_j$ (without renormalization for algebraic torus actions). Thus the local Lefschetz numbers are rational functions in the variable $\lambda_l$, and by the Lefschetz fixed point theorem the global Lefschetz number is a rational function in $\lambda_l$ as well. For Fredholm complexes, the global Lefschetz numbers can only have poles at $0$ or $\infty$ by the finiteness of the index. However all the local Lefschetz numbers vanish at $\lambda_l=0$, and they even vanish at $\infty$ by the self-duality under Serre duality (see Remark \ref{Remark_Self_duality_symmetries}).
This shows that the global Lefschetz number is a rational function that vanishes at $\lambda_l=0$ and $\lambda_l=\infty$, implying it is identically $0$. This proves the theorem.

\end{proof}

\subsection{Rarita-Schwinger operators}
\label{subsection_RS_ops}

Witten used his deformation technique to study the equivariant indices of spin Dirac and Rarita-Schwinger (RS) operators in \cite{witten1983fermion}, pioneering the technique in studying character valued indices.
There are various notions of Rarita-Schwinger operators and Rarita Schwinger fields studied in the literature, some of which correspond to various choices of gauge. 
In this subsection we discuss equivariant character formulas for these operators in the smooth setting, and discuss extensions to the singular spaces that we study. We do this by introducing a $k$-Rarita Schwinger operator which captures the versions of the operator in the literature.

\begin{remark}
\label{remark_Wittens_clarifications}
Different notions of the Rarita-Schwinger operator differ by adding or (in the K-theory sense) subtracting some number of copies of the Dirac operator, and therefore the different formulas differ by an integer multiple of the the spin Dirac index.
There is ``the" Rarita Schwinger operator studied in supergravity, appearing for instance in \cite{witten1983fermion,alvarez1984gravitational} and apparently there are slightly different ways to describe its index in terms of simpler operators.

Outside of supergravity, one can approximate the RS operator as a Dirac operator acting on spinors with values in the (complexified) tangent bundle, and we will call that operator $D_{TX}$.
If one uses that definition, then ``the" RS operator has an index that is the index of $D_{TX}$ minus the index of the spin Dirac operator.

We refer to the Appendix of \cite{freed2002k} which apparently grew out of the question, \textbf{why for the computation of anomalies is the Rarita-Schwinger field on an even dimensional manifold $X$ treated as a spinor field coupled to the
virtual bundle $TX - 1$, whereas in 11 dimensional M-theory it is treated as a spinor field coupled
to $TX - 3$ ?}
\end{remark}

\begin{remark}
Henceforth \textbf{we shall assume that the K\"ahler spaces we study have spin structures}. The techniques for spin$^{\mathbb{C}}$ Dirac operators we introduced in \cite{jayasinghe2023l2} can be used to extend the equivariant index formulas to even dimensional stratified pseudomanifolds with (local) wedge almost complex structures and spin structures.
\end{remark}

\begin{comment}
\begin{remark}
\label{Remark_Wittens_comments}
Different notions of the Rarita-Schwinger operator differ by adding or (in the K-theory sense) subtracting some number of copies of the Dirac operator, and therefore the different formulas differ by an integer multiple of the the spin Dirac index.

%There is "the" Rarita Schwinger operator but to understand what it is exactly you need to learn supergravity, not just index theory.
%"The" RS operator is the important ones and  there are slightly different ways to describe its index in terms of simpler operators, leading to your confusion

There is ``the" Rarita Schwinger operator but to understand this one must go into supergravity, not just index theory.
``The" RS operator is the important one and there are slightly different ways to describe its index in terms of simpler operators.

Outside of supergravity, you can approximate the RS operator as a Dirac operator acting on spinors with values in the (complexified) tangent bundle, and let us call that operator $D_{TX}$.
If one uses that definition, then the `real' RS operator has an index that is the index of $D_{TX}$ minus the index of the spin Dirac operator.
%(times I think 1 rather than 3, but I am forgetful). 
Alternatively, you can modify the definition of D so that the subtraction is not needed and some people do that.
\end{remark}
\end{comment}

In light of Remark \ref{remark_Wittens_clarifications} we define the \textbf{$k$-Rarita-Schwinger operator} (also denoted as $k$-RS operator), as the Dirac operator acting on spinors with values in the tangent bundle plus $k$ copies of the spin Dirac operator (in the $K$-theory sense). Thus the index of the $k$-RS operator is
\begin{equation}
    Ind(D_{k-RS})=Ind(D_{TX})+k Ind_{D_S}
\end{equation}
where $D_S$ denotes the spin Dirac operator in this subsection.

In \cite{witten1983fermion} Witten computes the local equivariant indices of the Rarita-Schwinger operator using the fact that it is the equivariant index of the Dolbeault complex with coefficients in 
\begin{equation}
\label{equation_twist_RS_operator}
    F=(TX\otimes \mathcal{K}^{1/2} \oplus k \mathcal{K}^{1/2}) 
\end{equation}
where $\mathcal{K}$ is the canonical bundle whose squareroot can be identified with the spinor bundle, for $k=-1$. We can use the fact that $TX=TX^{1,0}\oplus TX^{0,1}$ and that the Chern characters satisfy $ch(E_1 \oplus E_2)=ch(E_1)+ch(E_2)$ to study the equivariant formulas, following \cite{witten1983fermion}.
If the weights of the group action at a fixed point $a$ are $\gamma_{a,j}$ on a space of complex dimension $n$, then the equivariant index formula is 
\begin{equation}
\label{equation_RS_equivariant_formula_smooth}
     \sum_{a \in Fix(f)} \Bigg[ \bigg (\prod_{j=1}^{n} \frac{\lambda_{a,j}^{1/2}}{1-\lambda_{a,j}} \bigg) \bigg( \sum_{j=1}^n (\lambda_{a,j}+\lambda_{a,j}^{-1} +k) \bigg) \Bigg]
\end{equation}
equivalently
\begin{equation}
    \sum_{a \in Fix(f)} \bigg(\frac{i}{2} \bigg)^n \Bigg[ \bigg (\prod_{j=1}^{n} \frac{1}{\sin(\frac{1}{2} \gamma_{a,j} \theta)} \bigg) \bigg( \sum_{j=1}^n (2 \cos(\gamma_{a,j} \theta) +k) \bigg) \Bigg]
\end{equation}
where $exp(i \gamma_{a,j} \theta) =\lambda_{a,j}$.
This is essentially equation (41) of \cite{witten1983fermion} (modulo a minor typo there, c.f. equation (65) \cite{alvarez1984gravitational}).
It is easy to check that the index satisfies a product formula
\begin{multline}    \label{equation_product_formula_for_RS}
    Ind (D_{k-RS})(X_1 \times X_2)=Ind (D_{k-RS})(X_1) Ind (D_{S})(X_2)\\
    + Ind (D_{k-RS})(X_2) Ind (D_{S})(X_1) - k Ind (D_{S})(X_1 \times X_2)
\end{multline}
which is simply a consequence of separation of variables in the smooth setting and is equation (45) in \cite{witten1983fermion} for the case where $k=-1$, and and equation (11) of \cite{HommaSemmelmannRaritaSchwinger2019} for $k=+1$.

In particular, since the spin Dirac operator is rigid in the smooth setting, if a space $X$ is a product of two spin manifolds then since the indices of the spin Dirac operator on each factor is zero, the index of the RS operator on $X$ vanishes as well.

The RS operator corresponds to one of the infinitely many representations studied appearing in the Witten genus \cite{bott1989rigidity}. Together with the Atiyah-Hirzebruch rigidity result for the spin Dirac operator \cite{atiyah1970spin}, and the rigidity result for the k-RS operator for $k=0$ in Theorem 2 of \cite{landweber1988circle} we get rigidity for general $k$-RS operators in the smooth setting.

%A further argument is used in \cite{witten1983fermion} to show that if there is an action of a compact connected Lie group which is non-abelian, then strong results by Gromov-Lawson (Blaine?) which imply that the index vanishes implies that.... read the fibration argument again and state it. The proof is a contradiction result.

In the singular setting, the $k$-RS operator will still correspond to the Dolbeault complex twisted by the bundle \eqref{equation_twist_RS_operator}. Now there are complications, even if one restricts oneself to the minimal domain of the twisted Dolbeault complex.

Let us consider the toric variety in Example \ref{example_quadric_Morse_inequalities}. The contributions at the smooth fixed points follow from equation \ref{equation_RS_equivariant_formula_smooth}. Since the index of the spin Dirac operator is $0$ (see Example 7.30 of \cite{jayasinghe2023l2}), we see that the index of the $k-RS$ operator for this space is independent of $k$.

We computed the Lefschetz numbers for the complex twisted by $^wT^{1,0}X$ ($p=1$). We also showed how the local harmonic section trivializing the canonical bundle in a neighbourhood of the singularity can be described using the Poincar\'e residue. Using this it is easy to compute the local Lefschetz number corresponding to the Dolbeault complex twisted by $T^{1,0}X \otimes \mathcal{K}^{1/2}$, and the global Lefschetz number for the group action can be computed as 
\begin{equation}
\label{equation_nodal_RS_1}
    \frac{(\lambda^2+\mu^2+\lambda\mu+\lambda\mu) (\lambda \mu)^{1/2}}{(1-\lambda^2)(1-\mu^2)} +\frac{(\mu/\lambda+ 1/\lambda^2) (\mu/\lambda^3)^{1/2}}{(1-\mu/\lambda)(1-1/\lambda^2)}+\frac{(\lambda/\mu+1/\mu^2) (\lambda/\mu^3)^{1/2}}{(1-\lambda/\mu)(1-1/\mu^2)} = \frac{1}{(\lambda \mu)^{1/2}}
\end{equation}
where the summands on the left hand side are the contributions from the singular point and the points $a_1$ and $a_2$ respectively, and we refer to Section 7.3.3 of \cite{jayasinghe2023l2} for some relevant computations from which this can be derived.

There is an identification of the local equivariant index formulae for the Dolbeault complexes twisted by $T^{1,0}X \otimes \mathcal{K}^{-1}$ and $T^{0,1}X$ on $\mathbb{C}^2$ (this also corresponds to complex conjugation of the terms in the numerator), and for a smooth complex surface, the local equivariant index for the Dolbeault complex twisted by $T^{0,1}X \otimes \mathcal{K}^{1/2}$ is the same as the local equivariant index for the Dolbeault complex twisted by $T^{1,0}X \otimes \mathcal{K}^{-1/2}$. Since the space is an orbifold and a normal variety, we can proceed to use known results for orbifolds (see \cite{vergne1994quantification,meinrenken1998symplectic}) to extend the identification to the singularity. We can compute the global Lefschetz number for this complex as
\begin{equation}
\label{equation_nodal_RS_2}
    \frac{(\lambda^2+\mu^2+\lambda\mu+\lambda\mu) (\lambda \mu)^{-1/2}}{(1-\lambda^2)(1-\mu^2)} +\frac{(\mu/\lambda+ 1/\lambda^2) (\mu/\lambda^3)^{-1/2}}{(1-\mu/\lambda)(1-1/\lambda^2)}+\frac{(\lambda/\mu+1/\mu^2) (\lambda/\mu^3)^{-1/2}}{(1-\lambda/\mu)(1-1/\mu^2)} = (\lambda \mu)^{-1/2}
\end{equation}
and again we refer to Section 7.3.3 of \cite{jayasinghe2023l2}.

In \cite{HommaSemmelmannRaritaSchwinger2019}, the authors say that ``it is interesting to see to which extent Weitzenb\"ock formulas for the Rarita-Schwinger operator can be applied and to find examples
of compact spin manifolds of positive scalar curvature admitting non-trivial Rarita-Schwinger fields". Recall that there are isometric K\"ahler Hamiltonian circle actions on smooth manifolds only if they are Fano (a consequence of the fact that given such actions, there are no non-trivial global sections of the canonical bundle which can be concluded by the holomorphic Morse inequalities as observed in \cite{witten1984holomorphic}).
Since the Morse inequalities give an alternative to the B\"ochner technique in the presence of group actions, one could try to conduct a preliminary investigation on answering the question above. However since the Rarita-Schwinger operator is only identified up to lower order terms in the study of supergravity, it is not clear whether counting the elements in the kernel of the twisted Dolbeault-Dirac operators correspond to counting the Rarita-Schwinger fields even though the indices are equal.

The question of counting elements in the cohomology of the twisted Dolbeault complex is nevertheless mathematically fascinating.
Let us consider Morse inequalities for the twisted Dolbeault complex corresponding to the $k$-RS operator on a smooth K\"ahler manifold of complex dimension $n$.
The local Morse polynomial at a critical point is given by equation \eqref{equation_Naki_Mayina}, where 
\begin{equation}
    E_a(\theta)= \bigg(\prod_{j=1}^{n} \lambda_{a,j}^{1/2} \bigg) \bigg( \sum_{j=1}^n (\lambda_{a,j}+\lambda_{a,j}^{-1} +k) \bigg)
\end{equation}
where $\lambda_{a,j}=e^{i \gamma_j^a \theta}$,
and we can rewrite \eqref{equation_Naki_Mayina} for this complex as 
\begin{multline}
\label{equation_smooth_kRS_Hol_Morse}
    b^{n_a} \bigg( \prod_{\gamma_j^a >0} \frac{e^{i |\gamma_j^a| \theta/2}}{1-e^{i |\gamma_j^a| \theta}} \prod_{\gamma_j^a <0} \frac{e^{i |\gamma_j^a| \theta/2}}{1-e^{i |\gamma_j^a| \theta}} \bigg) \bigg( \sum_{m=1}^n (e^{i |\gamma_m^a| \theta}+e^{-i |\gamma_m^a| \theta} +k) \bigg)\\=b^{n_a} \bigg( \prod_{\gamma_j^a} \frac{e^{i |\gamma_j^a| \theta/2}}{1-e^{i |\gamma_j^a| \theta}} \bigg) \bigg( \sum_{m=1}^n (e^{i |\gamma_m^a| \theta}+e^{-i |\gamma_m^a| \theta} +k) \bigg).
\end{multline}
The dual Morse inequalities are
\begin{multline}
     b^{n-n_a} \bigg( \prod_{\gamma_j^a <0} \frac{e^{-i |\gamma_j^a| \theta/2}}{1-e^{-i |\gamma_j^a| \theta}} \prod_{\gamma_j^a >0} \frac{e^{-i |\gamma_j^a| \theta/2}}{1-e^{-i |\gamma_j^a| \theta}} \bigg) \bigg( \sum_{m=1}^n (e^{i |\gamma_m^a| \theta}+e^{-i |\gamma_m^a| \theta} +k) \bigg)\\=b^{n-n_a} \bigg( \prod_{\gamma_j^a} \frac{e^{-i |\gamma_j^a| \theta/2}}{1-e^{-i |\gamma_j^a| \theta}} \bigg) \bigg( \sum_{m=1}^n (e^{i |\gamma_m^a| \theta}+e^{-i |\gamma_m^a| \theta} +k) \bigg).
\end{multline}
It is easy to observe that the $k$-RS operator satisfies \textbf{self duality under Serre duality} in the smooth setting  which has important consequences as highlighted in Remark \ref{Remark_Self_duality_symmetries}.

Let us consider the expository example of spinning the Riemann sphere in Example \ref{Example_spinning_sphere_intro}. The Morse polynomial for the twisted Dolbeault complex corresponding to the $k$-RS operator for the action in that example is
\begin{equation}
    \frac{\lambda^{3/2}+k \lambda^{1/2} + \lambda^{-1/2}}{(1-\lambda)}+b\frac{\lambda^{3/2}+k \lambda^{1/2} + \lambda^{-1/2}}{(1-\lambda)}
\end{equation}
and the dual Morse polynomial is
\begin{equation}
    b \frac{\lambda^{3/2}+k \lambda^{1/2} + \lambda^{-1/2}}{(1-\lambda^{-1})}+\frac{\lambda^{3/2}+k \lambda^{1/2} + \lambda^{-1/2}}{(1-\lambda^{-1})}.
\end{equation}
Then the classical Morse polynomial is 
\begin{equation}
    \lambda^{-1/2}(1+b)
\end{equation}
which is easy to see from just the fact that the Morse polynomial has only one term with a negative power of $\lambda$ in its supertrace expansion, and self duality under Serre duality (see Remark \ref{Remark_Self_duality_symmetries}).
In particular it implies that there can only be two elements in the null space of the corresponding twisted Dirac operator, though it does not guarantee that two such elements exist.

In the general case, the contribution to the local classical Morse polynomial from a given critical point can be figured out using the local Morse polynomial and the self duality under Serre duality. Studying the exponents of the terms in the expansion of \eqref{equation_smooth_kRS_Hol_Morse}, it is easy to obtain an upper bound for the contribution to the global cohomology from a critical point as the sum over $m$ of the quantity $L_m$, where $L_m$ is the number of tuples $(l_1,..., l_n)$ with non-negative integers $l_j$ that solve the inequality
\begin{equation}
    (\sum_{j=1}^n |\gamma^a_j|)-2 |\gamma^a_m|+ \sum_{j=1}^n l_j |\gamma^a_j| \leq 0,
\end{equation}
for each fixed $m$.

\bibliographystyle{alpha}
\bibliography{reference}

\end{document}